\documentclass{article}
\usepackage[utf8]{inputenc}
\usepackage{amsmath,amsfonts, amsthm,amssymb,bbm,mathtools,hyperref,mathrsfs,stmaryrd,authblk}
\usepackage[a4paper, total={5.8in, 9.5in}]{geometry}

\PassOptionsToPackage{hyphens}{url}
\usepackage{hyperref}

\newtheorem{theorem}{Theorem}[section]
\newtheorem{cor}[theorem]{Corollary}
\newtheorem{lem}[theorem]{Lemma}

\newtheorem{remark}[theorem]{Remark}
\newtheorem{definition}[theorem]{Definition}
\newtheorem{example}[theorem]{Example}
\newtheorem{assumption}{Assumption}[section]

\numberwithin{equation}{section}

\newcommand{\RR}{\mathbb{R}} 
\newcommand{\fH}{\mathcal{H}} %
\newcommand{\EE}{\mathbb{E}} 
\newcommand{\PP}{\mathbb{P}} 
\newcommand{\fM}{\mathcal{M}} 
\newcommand{\fF}{\mathcal{F}} 

\newcommand{\1}{\mathbbm{1}} 
\newcommand{\fP}[1]{\mathcal{P}_{#1}} 
\newcommand{\fL}{\mathcal{L}} 
\newcommand{\fD}{\mathcal{D}} 
\newcommand{\fI}{\mathcal{I}}
\newcommand{\TT}{\mathbb{T}} 
\newcommand{\MM}{\mathbb{M}} 
\newcommand{\fS}{\mathcal{S}} 
\newcommand{\ZZ}{\mathbb{Z}} 

\newcommand{\fG}{\mathcal{G}}
\newcommand{\NN}{\mathbb{N}}
\newcommand{\fN}{\mathcal{N}}

\newcommand{\fK}{\mathcal{K}}

\newcommand{\fB}{\mathcal{B}}

\newcommand{\DD}{\mathbb{D}}
\newcommand{\fE}{\mathcal{E}}
\newcommand{\para}{\olessthan}
\newcommand{\fZ}{\mathcal{Z}}

\title{A Generalized Rough Super Brownian Motion}
\author{Ruhong Jin}
\affil{University of Oxford, \href{mailto:ruhong.jin@maths.ox.ac.uk}{ruhong.jin@maths.ox.ac.uk}}
\date{\today}

\begin{document}
\maketitle

\begin{abstract}
    In this paper, we consider a scaling limit of some branching random walk in random environment whose off-spring distribution has infinite variance. The Laplace functional of the limiting super-process is given by $\EE\left[e^{-\langle\mu_t,\varphi_0\rangle}\right] = e^{-\langle\mu_0,U_t(\varphi_0)\rangle}$, where $U_t(\varphi)$ is the solution to the equation 
    \[
        \partial_t\varphi = \fH\varphi - \varphi^{1+\beta},\qquad \varphi(0) = \varphi_0.
    \]
    The existence and uniqueness of this non-linear equation are also proved as an intermediate step. We also give a martingale characterization of above super-process and show that it possesses the compact support property.
\end{abstract}

\section{Introduction}
The branching random walk in the random environment(BRWRM) is a random walk with stochastic branching mechanism and branching rate. This work explores the large-scale behaviour of some BRWRM whose branching mechanism and rate depend on the random environment at each spatial points.

The scaling limit of branching random walk has been discussed thoroughly in \cite{dawson1993measure}. The aim of this paper is to generalize the results in \cite{dawson1993measure} to the BRWRM with a rough environment.

Given a random potential 
\[
    V(x) = \xi(x), \qquad \{\xi(x)\}_{x\in\ZZ^2}\text{ i.i.d }\sim \Phi,
\]
with $\EE[\Phi] = 0,\EE[\Phi^2] = 1$ on lattice $\ZZ^2$, it is shown in \cite{perkowski2021rough} that certain scaled branching random walks in the random environment with birth(one particle) rate $\xi_+$ and die rate $\xi_-$ will converge to a measure-valued Markov process whose Laplace transformation is associated to the stochastic PDE.
\begin{equation}
    \partial_t\varphi = (\Delta + \xi)\varphi -\varphi^2,\qquad \varphi(0) =\varphi_0,
\end{equation}
where $\xi$ is the white noise.

The BRWRM considered in \cite{perkowski2021rough} has a finite variance off-spring distribution and a general question is what happens if the variance is infinite. It is discussed in \cite{dawson1993measure,etheridge2000introduction} that the Laplace functionals of the limit super processes given by classical branching random walks should satisfy the PDE
\[
    \partial_t\varphi = L\varphi - b(x)\varphi -c(t,x)\varphi^2 + \int_0^\infty(1-e^{-\theta \varphi(t,x)}-\theta\varphi(t,x))n(x,d\theta),
\]
where $A$ is a generator of a Feller semi-group, $b$ is bounded and $c > 0$ and $n$ is some sort of transition kernel. 

Due to the result in \cite{perkowski2021rough}, it is then natural to think that the super-process should exist when $b$ is only a distribution, for example, white noise $\xi$. We do not try to answer this general question in the paper since we are lacking knowledge of dealing with the part with transition kernel $n$. However, we start with a non-trivial but simpler case when $n(x,d\theta) \sim \theta^{\beta +2}$ for $0 < \beta < 1$. In other words, we try to deduce the existence and uniqueness of the super-process whose Laplace functional is given by the SPDE
\begin{equation}\label{equ.beta_PAM}
    \partial_t\varphi = (\Delta + \xi)\varphi -\varphi^{1+\beta}.
\end{equation}
To this goal, we consider the BRWRM whose branching mechanism is given by 
\[
    \frac{2\xi_+}{(1+\beta)|\xi|}(1-s)^{1+\beta} - \frac{\xi}{|\xi|} + \frac{2\xi_+}{|\xi|}s,
\]
and show the scaling limit of this branching random walk gives us exactly the desired super process.

It is worth mentioning that there is actually no general existence and uniqueness theory of equation \eqref{equ.beta_PAM} in the literature. The main difficulty of solving equation \eqref{equ.beta_PAM} is the Anderson Hamiltonian $\fH = \Delta + \xi$. Due to the lack of regularity, it is not possible to define the product $\xi\varphi$ directly in dimension $2$ or $3$. The study of equation \eqref{equ.beta_PAM} then requires recently built  techniques such as regularity \cite{hairer2014theory} or the para-controlled distributions \cite{gubinelli2015paracontrolled}. In dimension 2, all of the solution theories require a renormalization, which means we need to remove a diverging constant in the Anderson Hamiltonian $\fH$ and consider actually the equation $\partial_t\varphi = (\fH-\infty)\varphi - \varphi^{1+\beta}$. It is understood as the limit of renormalized and discretized equation with 
\[
    \xi^n_e = \xi^n - c_n, c_n \simeq log(n).
\]
The second difficulty of solving equation \eqref{equ.beta_PAM} arises from the weight operation when solving the corresponding linear equation $\partial_t\varphi = \fH\varphi$ as in \cite{martin2019paracontrolled,hairer2015simple}. The solution theory of PAM uses heavily the logarithm growth of $\xi$ on the $\RR^2$. When we view \eqref{equ.beta_PAM} as 
\[
\partial_t\varphi = (\fH - \varphi^\beta)\varphi.
\]
The growth of $\xi - \varphi^\beta$ will be exponential and thus we can not apply directly the solution theory of PAM on $\RR^2$. However, we will see in section \ref{sec.variant} that the minus sign helps us build the existence and uniqueness results.

In recent work by Perkowski and Rosati \cite{perkowski2021rough}, they only obtain a partial existence and uniqueness result of equation \eqref{equ.beta_PAM} but this is enough for their application due to the $L^2$ bounded property from the finite variance off-spring distribution. However, their methods rely strongly on the $L^2$ martingale theory and can not be used directly for the infinite variance case. We, therefore, search for an alternative way, combining the idea of \cite{perkowski2021rough} and the classical approach via Laplace functional, to give the convergence in the infinite variance case. Finally, although we work in dimension 2, all results(with some change of parameters) are valid in dimension 1 with a much easier argument. 

\textbf{Structure of the work.} We only consider a deterministic environment with assumption \ref{ass.environment} since the construction of random environment from the deterministic environment is given in \cite{perkowski2021rough}. We introduce our model and the definition of $\beta-$rough super Brownian motion in section \ref{sec.model_formulation} and state our main results, i.e. the convergence results of the scaled branching random walk, a martingale problem formulation and the compact support property of the $\beta-$super Brownian motion. 

We study a variant of PAM ($\fH\rightarrow \fH - \phi$ for some $\phi$) and the non-linear equation like \ref{equ.beta_PAM} in section \ref{sec.variant} and show the existence and uniqueness of solution in the space with weight $e^{l|x|^\sigma}$ for any $l\in\RR,0\leq \sigma<1$. The result allows us to implement freely with the $\varphi$ in the Laplace functional $\EE[e^{-\langle\mu_t,\varphi\rangle}]$.

We then give the detailed proof of convergence of scaled branching random walk to the $\beta-$rough super Brownian motion by estimating the moments of $\sup_{0\leq s\leq t}\langle\mu_s,\varphi\rangle$. This estimation requires controlling branching random walks from a coupling method and passing the estimations to a simpler branching random walk. Then we apply the Aldous criterion to obtain the tightness.

Section \ref{sec.martingale_problem} is devoted to giving a martingale problem formulation of $\beta-$rough super Brownian motion. We finally then prove the compact support property that is proved in \cite{jin2023compact} for the finite variance case.

\section{Notation}
\subsection{Notation on regularity}
    We define the lattice $\ZZ_n^2 = \frac{1}{n}\ZZ^2$ and also denote $\ZZ_{\infty}^2 :=\RR^2$ for convenience. Similarly, we define $\TT_n^2:=(\RR/n\ZZ)^2$ and $\tilde{\TT}_n^2:=n\left(-\frac{1}{2},\frac{1}{2}\right]^2$ .We will consider weighted Besov space on $\ZZ_n^2$ introduced in \cite{martin2019paracontrolled}. 
    
    To motivate the definition, let's first introduce the definition of weight.
    \begin{definition}\label{def.weight}
   We denote by 
   \[
        \omega^{pol}(x) := \log(1+|x|),\qquad\omega_\sigma^{exp}(x) :=|x|^\sigma,
   \]
   where $x \in \RR^d,\sigma \in (0,1)$. For $\omega \in \pmb{\omega}:=\{\omega^{pol}\}\cup \{\omega_\sigma^{exp}|\sigma\in(0,1)\}$, we denote by $\pmb{\varrho}(\omega)$ the set of measurable, strictly positive $\rho:\RR^d\rightarrow(0,\infty)$such that
   \[
        \rho(x)\lesssim\rho(y)e^{\lambda\omega(x-y)},
   \]
   for some $\lambda = \lambda(\rho) > 0$. We also introduce the notation $\pmb{\varrho}(\pmb{\omega}):=\bigcup_{\omega\in\pmb{\omega}}\pmb{\varrho}(\omega)$. The objects $\rho \in \pmb{\varrho}(\pmb{\omega})$ will be called \emph{weights}.
\end{definition}
    The weight we considered in this paper is of the following form:
    \[
        p(a)(x) := (1+|x|)^a,\qquad e(l)(x):=e^{l|x|^\sigma},
    \]
    for non-negative $a,l$ and $0 < \sigma < 1$. We drop the index $\sigma$ since it does not influence the proof.
    
    For function $\varphi:\ZZ_n^2 \rightarrow \RR$, we define its Fourier transformation as function on torus $\TT_n^2$ by 
    \[
        \fF_n\varphi(k) := \frac{1}{n^2}\sum_{x\in \ZZ_n^2}\varphi(x)e^{-2\pi\iota k\cdot x},\qquad k\in\TT_n^2,
    \]
    and the inverse Fourier transformation for functions $\varphi: \TT_n^2 \rightarrow\RR$, 
    \[
        \fF_n^{-1}\varphi(x) := \int_{\TT_n^2} \varphi(k)e^{2\pi\iota k\cdot x}dk,\qquad x \in \ZZ_n^2.
    \]
    Let $\omega \in \pmb{\omega}$, it is defined in \cite{martin2019paracontrolled} the space $\fS_\omega$ consisting of all functions such that all derivatives of itself and its Fourier transformation decay like $e^{-\lambda \omega}$ for any $\lambda > 0$. The dual space of $\fS'_\omega$ is called ultra-distribution which will be the space the Besov spaces are built on.

Suppose $\rho_{-1},\rho_0 \in \fS'_\omega(\RR^2)$ are two non-negative and radial functions such that the support of $\rho_{-1}$ is contained in a ball $B \subset \RR^2$, the support of $\rho_0$ is contained in an annulus $\{x \in \RR^2: 0 < a \leq |x|\leq b\}$ and such that with 
\[
    \rho_{j} = \rho_{0}(2^{-j}\cdot),
\]
the following conditions are satisfied:
\begin{enumerate}
    \item $\sum_{i=-1}^\infty \rho_j(x) = 1,\forall x \in \RR^2$
    \item supp($\rho_i$) $\cap$ supp($\rho_j$) = $\emptyset$ if $|i-j| > 1$
\end{enumerate}
For each $\omega \in \pmb{\omega}$,  such a partition of unity always exists in $\fS'_\omega(\RR^2)$. We define furthermore an index $j_n$ to be the minimum index such that the support of $\phi_j$ intersects with $\tilde{\TT}_n^2$. Update the partition of unity of $\tilde{\TT}_n^2$ by defining periodic functions 
\[
    \rho^n_j = \rho_j, \qquad j < j_n, \qquad \rho^n_{j_n} = 1 - \sum_{j=-1}^{j_n-1}\rho_j.
\]
For a ultra-distribution $f\in\fS'_\omega(\ZZ_n^2)$, the Littlewood-Paley blocks of $f$ as 
\[
    \Delta^n_jf = \fF^{-1}(\rho^n_j\fF(f)).
\]
for $j = -1,0,\cdots,j_n$. 

For $\alpha\in \RR, p,q \in[1,\infty]$ and $\rho \in \pmb{\varrho}(\pmb{\omega})$, we define the weighted Besov space $B_{p,q}^\alpha(\ZZ_n^2,\rho)$ by
   \[
        B_{p,q}^\alpha(\ZZ_n^2,\rho):= \{f \in \fS'_\omega: \|f\|_{B_{p,q}^\alpha(\ZZ_n^2,\rho)} := \|(2^{j\alpha}\|\rho^{-1}\Delta_jf\|_{L^p})\|_{\ell^q} < \infty\}.
   \]
   In particular, the discrete H\"older space is defined by $C^\alpha(\ZZ_n^2,\rho):= B^{\alpha}_{\infty,\infty}(\ZZ_n^2,\rho)$ for $\alpha$ not an integer. The extension operator $\fE^n$ defined in \cite{martin2019paracontrolled} will be helpful when we consider the convergence from the discrete equation to the continuum equation.
    
    We also consider the space concerning the regularity with respect to time. Fix a time horizon $T>0$, we define the space $C_TC^\alpha(\ZZ_n^2,e(l))$ to be all continuous in time function $f:[0,T]\rightarrow C^\alpha(\ZZ_n^2,e(l+T))$ such that the norm
    \[
       \|f\|_{C_TC^\alpha(\ZZ_n^2,e(l))}:=\sup_{0\leq t\leq T}\|f(t)\|_{C^\alpha(\ZZ_n^2,e(l+t))},
    \]
    is finite. For $0 < \beta < 1$, the space $C^\beta_TC^\alpha(\ZZ_n^2,e(l))$ is defined by 
    \[
        \|f\|_{C^\beta_T C^\alpha(\ZZ^2_n,e(l))} := \|f\|_{C_TC^\alpha(\ZZ_n^2,e(l))} + \sup_{0\leq s< t\leq T} \frac{\|f(t)-f(s)\|_{C^\alpha(\ZZ_n^2,e(l+t))}}{t-s}.
    \]  
    We also define the space $\fM^\gamma C^\alpha(\ZZ_n^2,e(l))$ to be all function $f$ such that $t^\gamma f(t) \in C_TC^\alpha(\ZZ_n^2,e(l))$. And the space $\fL^{\gamma,\alpha}(\ZZ_n^2,e(l))$ consists of all functions $f$ such that 
    \[
        f \in \fM^\gamma C^\alpha(\ZZ_n^2,e(l)),\qquad t^\gamma f\in C^{\frac{\alpha}{2}}L^\infty(\RR^2,e(l)).
    \]
    We simply write $\fL^\alpha$ when $\gamma = 0$.
\subsection{Notation on operators}
    We give the definition of some discrete operators here. We define the discrete Laplace on lattice $\ZZ_n^2$ by 
    \[
        \Delta^n f(x) := n^2\sum_{y\sim x}(f(y) - f(x)).
    \]
    We will use $P_t^n$ to denote the discrete heat kernel, i.e. for any function $\varphi$, we denote $P_t^n\varphi$ to be the mild solution of the equation 
    \begin{equation}\label{equ.heat}
        \left\{
        \begin{array}{ll}
            \partial_t w^n_t = \Delta^n w^n_t,\\
            w^n_0 =\varphi.
        \end{array}
    \right.
\end{equation}
We also use $P_t$ for the continuum equation. Also, for a deterministic environment $\xi^n$(which will be introduced in assumption \ref{ass.environment}), we define the Anderson Hamiltonian $\fH^n = \Delta^n + \xi^n_e$, where $\xi^n_e = \xi^n - c_n$ with $c_n$ a renormalization constant to be introduced in the definition of deterministic environment. We then use the operator $T_t^n\varphi$ to indicate the solution to the equation 
    \begin{equation}\label{equ.PAM_homo}
        \left\{
        \begin{array}{ll}
            \partial_t w^n_t = \fH^nw^n_t,\\
            w^n_0 =\varphi.
        \end{array}
    \right.
\end{equation}
if the solution exists and is unique. All above definitions move to the continuous case with symbols $P_t,\Delta$ and $T_t,\fH$. There is another definition of $\fH^n$ which links to the continuous equation by the para-product. 
\begin{definition}
    For any two distributions $\varphi,\phi$ on $\ZZ_n^2$, the product can be decomposed as $\varphi\cdot\phi =\varphi \para \phi + \varphi\odot\phi + \phi\para\varphi$ such that 
    \[
        \varphi\para\phi = \sum_{j=-1}^{j_n} \Delta^n_{< j-1}\varphi\Delta^n_j\phi,\qquad \varphi\odot\phi = \sum_{\substack{|i-j|\leq 1\\-1 \leq i,j\leq j_n}}\Delta^n_i\varphi\Delta^n_j\phi,
    \]
    where $\Delta^n_{< i} = \sum_{j=-1}^i\Delta^n_j$. When $n=\infty$, it is exactly the same definition on $\RR^2$. This builds the relation between discrete and continuum cases.
\end{definition}
The definition of para-product allows us to define the para-controlled distribution. 
\begin{definition}
    Suppose $\kappa > 0$, $\xi^n$ and $\fI\xi^n$ satisfy the assumption \ref{ass.environment}. We say $\varphi^n$ is para-controlled if $\varphi^n \in C^{1-\kappa}(\ZZ_n^2,e(l))$ for some $l \in \RR$ and 
    \[
        \varphi^{n,\#}:= \varphi^n - \varphi^n \para \fI\xi^n \in C^{1+2\kappa}(\ZZ_n^2,e(l)).
    \]
\end{definition}
We define the para-controlled space $\fD(\ZZ_n^2, e(l))$ for all paracontrolled functions $\varphi^n$ such that
\[
    \|\varphi^n\|_{\fD(\ZZ_n^2,e(l))} = \|\varphi^n\|_{C^{1-\kappa}(\ZZ_n^2,e(l))} +  \|\varphi^{n,\#}\|_{C^{1+2\kappa}(\ZZ_n^2,e(l))} < \infty.
\]
We also define the domain $\fD_{\fH}$ of the operator $\fH$ on $\RR^2$, which is given by
\[
    \fD_{\fH}:=\left\{ \int_0^t T_su ds: t > 0, u \in C^\zeta(\RR^2,e(l)), l\in\RR,\zeta > 0\right\}.
\]  
The property of this set is discussed in \cite{perkowski2021rough}.
\subsection{General notations}
Throughout the paper, we will use $X^n$ to indicate the random walk on the lattice $\ZZ_n^2$ with the generator $\Delta^n$. 

We denote $\fM_F(\RR^2)$ as the space of finite measure on $\RR^2$ equipped with weak topology defined by family $C_b(\RR^2)$. We also let $\fM_F^{\pm}(\RR^2)$ be the signed finite measure on $\RR^2$.

For any Polish space $E$, we consider the space of $E-$valued càdlàg path $\DD([0,\infty),E)$ endowed with Skorohod topology.

\section{Model Formulation}\label{sec.model_formulation}
From now on, fix a parameter $1 < \beta < 2$. To simplify this paper, we only consider the deterministic environment $\xi^n$, which is simply a function from $\ZZ_n^2$ to $\RR$. We refer readers to \cite{perkowski2021rough} for how to deal with the situation when the environment $\xi^n$ is random. Let $\chi$ be a smooth function taking value 1 outside of $\left(-\frac{1}{4},\frac{1}{4}\right)^2$ and $0$ in $\left(-\frac{1}{8},\frac{1}{8}\right)^2$.
\begin{assumption}\label{ass.environment}
    For a deterministic environment $\xi^n$, define $\fI\xi^n$ to be the solution to the equation $-\Delta^n\fI\xi^n = \fF_n^{-1}(\chi\fF_n\xi^n)$. Consider a regularity parameter $\kappa' \in \left(0,\frac{1}{4}\right)$. We assume
    \begin{enumerate}
        \item There exists $\xi\in\cap_{a>0}C^{-1-\kappa'}(\RR^2,p(a))$ such that, for all $a > 0$,
        \[
            \sup_n\|\xi^n\|_{C^{-1-\kappa'}(\ZZ_n^2,p(a))} < \infty,\qquad \fE^n\xi^n\rightarrow\xi \text{ in }C^{-1-\kappa'}(\RR^2,p(a))
        \]
        \item The quantities $\frac{|\xi^n|}{n}$ is uniformly bounded over $n$. Furthermore, there exists $\nu \geq 0$ such that the following convergence holds
        \[
            \fE^n\frac{\xi_+^n}{n} \rightarrow \nu,\qquad \fE^n\frac{|\xi^n|}{n} \rightarrow 2\nu
        \]
        in $C^{-\delta}(\RR^2,p(a))$ for any $\delta >0, a > 0$.
        \item There exists $\{c_n\}\subset\RR$ with $\frac{c_n}{n}\rightarrow 0$ and there exists $\fI\xi\in\cap_{a>0} C^{1+\kappa'}(\RR^2,p(a))$ and $\fI\xi\diamond\xi \in \cap_{a>0}C^{-2\kappa'}(\RR^2,p(a))$ such that for all $a > 0$, 
        \[
            \sup_{n}\|\fI\xi^n\|_{C^{-1-\kappa'}(\ZZ_n^2,p(a))} + \sup_n\|(\fI\xi^n\odot\xi^n) - c_n\|_{C^{-2\kappa'}(\ZZ_n^2,p(a))} < \infty
        \]
        and $\fE^n\fI\xi^n \rightarrow \fI\xi$ in $C^{-1-\kappa'}(\RR^2,p(a))$ and $\fE^n(\fI\xi^n\odot\xi^n-c_n)\rightarrow \fI\xi\diamond\xi$ in $C^{-2\kappa'}(\RR^2,p(a))$.
    \end{enumerate}
\end{assumption}
We call a distribution $\xi$ on $\RR^2$ a deterministic environment if there exists $\xi^n$ satisfying the assumption \ref{ass.environment} and $\xi$ is the limit. Given a deterministic environment with assumption $\ref{ass.environment}$, we establish the particle systems approximating desired super-process.

 For each $n\in\NN$, we consider a branching random walk on the lattice $\ZZ_n^2$. In the language of branching random walk, the underlying motion is given by random walk $X^n$ with generator $\Delta^n$. The branching rate is given by 
\[
    a(x,dt) = |\xi^n(x)|dt,
\] 
for all $x \in \ZZ_n^2$. The branching mechanism is given by the generating function 
\begin{equation}
    g^n(x,s) = \frac{2\xi^n_+}{(1+\beta)|\xi^n|}(1-s)^{1+\beta} - \frac{\xi^n}{|\xi^n|} + \frac{2\xi_+^n}{|\xi^n|}s = \sum_{i=0}^\infty p_i\cdot q_i(x)s^i,
\end{equation}
where 
\[
    \sum_{i=0}^\infty p_i s^i = \frac{1}{1+\beta}(1-s)^{1+\beta} + s := g(s).
\]
One can verify that we have 
\[
    q_0(x) = \frac{(1-\beta)\xi^n_+ + (1+\beta)\xi^n_-}{|\xi^n|} \in \left(1-\beta,1+\beta\right),\qquad q_i(x) = \frac{2\xi^n_+}{|\xi^n|}\leq 2, i \geq 2.
\]
Since $\frac{\partial}{\partial s} g^n(x,0) = 0 = p_1$, we can set $q_1(x)$ to be any number. For example, set $q_1(x) = 1$. 

A detailed description can be given by defining the infinitesimal generator of a Markov process on the configuration space $E^n = (\NN)^{\ZZ_n^2}$ endowed with product topology. For any configuration $\eta \in E^n$, we define 
\[
    \eta^{x\rightarrow y}(z) = \1_{\{\eta(x) > 0\}}(\eta(z) + \1_{z=y} - \1_{z=x}),\qquad \eta^{x+k}(z) = 0\vee(\eta(z)+(k-1)\1_{z=x}).
\]
Furthermore, for any $F \in C_b(E^n)$, we define 
\[
    \Delta_x^n F(\eta) = n^2\left(\sum_{y\sim x}F(\eta^{x\rightarrow y}) - F(\eta)\right), \qquad d_x^{k} F(\eta) = F(\eta^{x+k}) - F(\eta).
\]
Since any configuration on $E^n$ can be viewed as a sigma finite point measure on $\RR^2$ and we eventually prove the tightness in the measure space, we will freely change from $E^n$ to the point measure on $\RR^2$ in the following content. 

To construct the approximation particle systems, we need furthermore the Poisson random measure introduced in appendix \ref{app.random}. 

\begin{definition}\label{def.discrete_mu}
    Fix a compact supported measure $\mu^n_0$ on $\ZZ_n^2$ and an average parameter $\varrho\leq \beta$. Let $\epsilon = n^{-\frac{1}{\varrho}}$. We define an $E^n-$valued stochastic process $Z^n(t)$ started at a Poisson random measure on $\ZZ_n^2$ with intensity $\frac{\mu^n_0}{\epsilon}$ with infinitesimal generator 
    \begin{equation}\label{equ.generator}
        \fL^n F(\eta) = \sum_{x\in\ZZ_n^2} \eta_x\left[\Delta_x^n F(\eta) + |a^n(x)|\sum_{k=0}^\infty p_kq_k(x)d_x^k F(\eta)\right],
    \end{equation}
    for any $F \in \fD(\fL^n)$ which consists of all $F\in C_b(E^n)$ such that the right-hand side of \eqref{equ.generator} is finite. Finally, set $\mu^n(t) = \epsilon Z^n(t)$ for all $t\in[0,\infty)$. 
\end{definition}
Our main results concern the limit behaviour of the measures $\mu^n$ when $\varrho = \beta$, which will be called $\beta-$rough super Brownian motion. Here is the definition of $\beta-$super Brownian motion. We use $\prescript{}{\varkappa}{U_t(\varphi)}$ to denote the unique solution to the equation 
    \begin{equation}\label{equ.nonlinear_PAM_RSBM}
            \left\{
        \begin{array}{ll}
            \partial_t w_t = \fH w_t - \varkappa w_t^{1+\beta},\\
            w_0 = \varphi.
        \end{array}
    \right.
    \end{equation}
In most situation, we will drop the prescript $\varkappa$ and write $U_t(\varphi)$ if there is no confusion.
\begin{definition}\label{def.beta_RSBM}
    Let $\xi$ be a deterministic environment satisfying assumption \ref{ass.environment}. Let $\varkappa > 0$ and $\mu_0$ be a compact support measure. Let $\mu$ be a process with values in the space $\DD([0,\infty),\fM_F(\RR^2))$, such that $\mu(0) = \mu_0$. Write $\fF = \{\fF_t\}_{t \in [0,\infty)}$ for the completed and right-continuous filtration generated by $\mu$. We call $\mu$ a $\beta-$rough super Brownian motion with parameter $\varkappa$ if it satisfies one of the three properties below:
    \begin{enumerate}
        \item For any $t\geq 0$ and non-negative function $\varphi \in C_c^\infty(\RR^2)$. The process 
        \[
            N_t^{\varphi}(s) := e^{-\langle\mu_s,U_{t-s}(\varphi)\rangle},
        \]
        is a bounded martingale.
        \item For any $t\geq 0$, $\varphi_0 \in C_c^\infty(\RR^2)$ and $f \in C_tC^\zeta(\RR^2,e(l))$ for any $\zeta > 0, l < -t$. Let $\varphi_t(s)$ solves 
        \[
            \partial_s\varphi_t(s) + \fH\varphi(s) = f(s), \qquad s\in[0,t],\varphi_t(t) = \varphi_0,
        \]
        then 
        \[
            \MM^{\varphi_0,f}_t(s):=\langle\mu_s,\varphi_t(s)\rangle - \langle\mu_0,\varphi_0\rangle - \int_0^s\langle\mu_r,f(r)\rangle dr,
        \]
        defined for $s\in[0,t]$, is a a $L^{1+\theta}$ bounded purely discontinuous martingale 
for any $0 < \theta < \beta$. In addition, the random point measure associated with $\MM^{\varphi_0,f}_t$ has the compensator
        \[
    \langle\mu_s,\varkappa|\varphi_t(s)|^{1+\beta}\rangle\frac{\beta(1+\beta)}{\Gamma(1-\beta)x^{\beta+2}}\1_{x\varphi_t>0}dsdx
\]
        \item For all function $\phi \in \fD_\fH$, 
        \[
    L^\phi_t := \langle{\mu_t,\phi}\rangle - \langle\mu_0,\phi\rangle - \int_0^t \langle\mu_s,\fH\phi\rangle ds,
\]
is a $L^{1+\theta}$ bounded purely discontinuous martingale for any $0 < \theta < \beta$. In addition, the random point measure associated with $L_t^{\phi}$ has the compensator
\[
    \langle\mu_t,\varkappa|\phi|^{1+\beta}\rangle\frac{\beta(1+\beta)}{\Gamma(1-\beta)x^{\beta+2}}\1_{x\phi>0}dtdx.
\]
    \end{enumerate}
\end{definition}
\begin{remark}
    It will be seen in the proof of \ref{thm.equivalent_def} that the second definition actually holds for $f$ with any weight $e(l)$ if we goes from 2. to 1. and obtain the moment estimation of $\langle\mu_t,e(l)\rangle$ from definition 1. via similar argument as discrete case. This moments estimation then ensure the limit procedure in the proof from 3. to 2. when the weight of $f$ is arbitrary.
\end{remark}
Our main results are the following:
\begin{theorem}\label{thm.equivalent_def}
    The three definition in \ref{def.beta_RSBM} are equivalent.
\end{theorem}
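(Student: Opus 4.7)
The plan is to prove the equivalences by showing $(2)\Rightarrow(1)$, $(2)\Leftrightarrow(3)$, and $(1)\Rightarrow(2)$. Throughout, the well-posedness of the non-linear equation \eqref{equ.nonlinear_PAM_RSBM} and of the inhomogeneous backward linear equation $\partial_s\varphi_t + \fH\varphi_t = f$, both supplied by Section~\ref{sec.variant}, ensures that all test functions entering the arguments lie in the correct weighted H\"older spaces and are non-negative whenever $\varphi_0 \geq 0$. For $(2)\Rightarrow(1)$ I set $\varphi_0 = \varphi$ and $f(s) = \varkappa U_{t-s}(\varphi)^{1+\beta}$ in definition~2; the backward equation then forces $\varphi_t(s) = U_{t-s}(\varphi)$, since this $U$ satisfies the corresponding backward form of the non-linear PAM. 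Writing $Y_s = \langle\mu_s, U_{t-s}(\varphi)\rangle$ and applying the standard It\^o formula for semimartingales with jumps to $F(y) = e^{-y}$, the finite-variation contribution $-e^{-Y_{s-}}\langle\mu_s, f(s)\rangle\,ds$ is exactly cancelled by the compensator drift $e^{-Y_{s-}}\langle\mu_s, \varkappa U_{t-s}(\varphi)^{1+\beta}\rangle\,ds$ produced by the $\hat N$-integral $\int_0^\infty(e^{-x}-1+x)\,\hat N(\cdot, dx)$, via the identity
\[
\int_0^\infty (e^{-x}-1+x)\,\frac{\beta(1+\beta)}{\Gamma(1-\beta)\,x^{\beta+2}}\,dx = 1,
\]
which follows from two successive integrations by parts. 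Hence $N_t^{\varphi}(s)$ is a local martingale, and the maximum principle $U_{t-s}(\varphi) \geq 0$ forces it to lie in $(0,1]$, giving a true martingale.

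For $(2)\Leftrightarrow(3)$, the forward direction is immediate: take $\varphi_0 = \phi \in \fD_{\fH}$ and $f(s) = \fH\phi$, so $\varphi_t(s) \equiv \phi$ solves the backward equation, and $\MM_t^{\phi,\fH\phi}(s)$ coincides term-by-term with $L_s^{\phi}$, the two compensators being identical. For the reverse, partition $[0,t]$ as $0 = s_0 < \cdots < s_N = t$, apply definition~3 on each subinterval to the test function $\phi_i := \varphi_t(s_i) \in \fD_{\fH}$, and telescope the resulting martingale increments. Using the backward equation in the form $\varphi_t(s_{i+1}) - \varphi_t(s_i) \approx (f(s_i) - \fH\varphi_t(s_i))(s_{i+1} - s_i)$, the limit as the mesh vanishes recovers $\MM_t^{\varphi_0, f}$ together with its compensator; the regularity estimates from Section~\ref{sec.variant} justify passing to the limit both in the predictable drift and in the jump compensator.

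The remaining implication $(1)\Rightarrow(2)$ is the technical core. Differentiating $\EE[e^{-\epsilon\langle\mu_t,\varphi\rangle}\mid\fF_s] = e^{-\langle\mu_s, U_{t-s}(\epsilon\varphi)\rangle}$ in $\epsilon$ at $0$ yields the first-moment formula $\EE[\langle\mu_t,\varphi\rangle\mid\fF_s] = \langle\mu_s, T_{t-s}\varphi\rangle$; combined with the Duhamel identity $T_{s_2 - s_1}\varphi_t(s_2) = \varphi_t(s_1) + \int_{s_1}^{s_2} T_{r-s_1}f(r)\,dr$, this already proves $\MM_t^{\varphi_0, f}$ is a martingale. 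For the compensator one uses the non-analytic expansion
\[
U_{t-s}(\epsilon\varphi) = \epsilon\,T_{t-s}\varphi - \epsilon^{1+\beta}\,\varkappa\!\int_0^{t-s}\! T_{t-s-r}\big(T_r\varphi\big)^{1+\beta}\,dr + o(\epsilon^{1+\beta}),
\]
whose leading correction of order $\epsilon^{1+\beta}$, matched against the L\'evy--Khintchine exponent of the jumps of $\MM_t^{\varphi_0, f}$, uniquely pins down the compensator. The main obstacle is precisely this identification: since $1 < 1+\beta < 2$ the jumps only admit $L^{1+\theta}$ moments for $\theta < \beta$, so the finite-variance tools of \cite{perkowski2021rough} do not apply, and the $\beta$-stable jump structure must instead be read off from the non-analytic $\epsilon^{1+\beta}$ correction in the Laplace functional rather than from a second-moment computation.
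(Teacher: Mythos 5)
Your overall plan works and the $(2)\Rightarrow(1)$ direction is correct and matches the paper's (apply It\^o to $F(y)=e^{-y}$ with the choice $f(s)=\varkappa U_{t-s}(\varphi)^{1+\beta}$, which forces $\varphi_t(s)=U_{t-s}(\varphi)$; the identity
\[
\int_0^\infty (e^{-x}-1+x)\,\frac{\beta(1+\beta)}{\Gamma(1-\beta)\,x^{\beta+2}}\,dx = 1
\]
indeed cancels the drift and non-negativity of $U$ gives boundedness). Your $(3)\Rightarrow(2)$ partition-and-telescope argument is the same strategy as the paper. The non-analytic expansion $U_{t-s}(\epsilon\varphi) = \epsilon T_{t-s}\varphi - \epsilon^{1+\beta}\varkappa\int_0^{t-s}T_{t-s-r}(T_r\varphi)^{1+\beta}\,dr + o(\epsilon^{1+\beta})$ is also correct, and the $\epsilon\to0$ differentiation to get $\EE[\langle\mu_t,\varphi\rangle\mid\fF_s]=\langle\mu_s,T_{t-s}\varphi\rangle$ is a clean way to get the martingale property of $\MM_t^{\varphi_0,f}$.

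However, the compensator identification in $(1)\Rightarrow(2)$ is where the proposal falls short of a proof. ``Matched against the L\'evy--Khintchine exponent of the jumps'' is a heuristic, not an argument: the Laplace functional only determines the finite-dimensional law of $\mu$, and to conclude that the random point measure associated with $\MM_t^{\varphi_0,f}$ (or $L_t^{\phi}$) has the claimed compensator you must actually produce a canonical decomposition of the semi-martingale $\langle\mu_s,\phi\rangle$ and invoke uniqueness of that decomposition. The paper does this in two steps that your sketch omits: first it shows (via Lemma~\ref{lem.martingale_trans}) that the multiplicative functional $N_t(\phi)=e^{-\langle\mu_t,\phi\rangle + \int_0^t\langle\mu_s,\fH\phi - \varkappa\phi^{1+\beta}\rangle ds}$ is a local martingale; then it writes $e^{-\langle\mu_t,\phi\rangle}$ in two ways (via It\^o from $N_t$, and via the canonical semi-martingale decomposition of $\langle\mu_t,\phi\rangle$ with jump measure $\fN$), and compares the predictable finite-variation parts after replacing $\phi$ by $\theta\phi$ and reading off coefficients of each power of $\theta$ — it is this scaling argument that rules out a continuous martingale part ($C_t(\phi)\equiv0$) and produces the stable compensator. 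Without it, the $\epsilon^{1+\beta}$ term in the Laplace expansion gives you a strong hint but not a theorem.

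A second, smaller gap: the $(3)\Rightarrow(2)$ step implicitly requires a priori moment control to pass the limit in both the drift and the compensator as the mesh shrinks. Definition~3 does not hand you this directly; the paper earns it by choosing a $\tilde\phi\in\fD_\fH$ bounded below by a multiple of $e(l)$ (Lemma~\ref{lem.lower_bounds}), stopping at the $n$-th large jump of $\langle\mu_\cdot,\tilde\phi\rangle$, and using the compensator formula for the jump counts to obtain $\EE[\sup_{r\le T_n}\langle\mu_r,e(l)\rangle]<\infty$. Your telescope argument needs this integrability to justify dominated convergence of both $\langle\mu_{s'},\fH\varphi_t(s')-f(s')\rangle$ and the jump compensator along the partition; simply invoking ``the regularity estimates from Section~\ref{sec.variant}'' does not supply the needed control on $\mu$ itself.

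Finally, note the paper proves $1\Rightarrow3\Rightarrow2\Rightarrow1$ and not $2\Rightarrow3$. Your $2\Rightarrow3$ direction (take $\varphi_0=\phi$ and $f\equiv\fH\phi$) is plausible but you should verify that $\fH\phi$ lies in the test class $C_tC^\zeta(\RR^2,e(l))$ with $l<-t$ demanded by definition~2; for $\phi=\int_0^{t_0}T_su\,ds$ with $u\in C^\zeta(\RR^2,e(l_0))$ one has $\fH\phi=T_{t_0}u-u$, whose weight grows with $t_0$, so you need the freedom in $l_0$ to absorb this — this is fixable, but should be said.
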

The proof is given in section \ref{sec.martingale_problem}
\begin{theorem}\label{thm.existence_RSBM}
    For any $\varkappa > 0$, the $\beta-$rough super Brownian motion exists and has unique law. Furthermore, it can be realized as a limit of some particles systems.
\end{theorem}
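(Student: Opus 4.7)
My plan is to realize the $\beta$-rough super Brownian motion as the scaling limit of the particle systems $\mu^n$ from Definition \ref{def.discrete_mu} with averaging parameter $\varrho = \beta$, and to identify the limit through its Laplace functional. Concretely, I would show (i) that $(\mu^n)$ is tight in $\DD([0,\infty),\fM_F(\RR^2))$, and (ii) that every subsequential limit $\mu$ satisfies
\[
    \EE\bigl[e^{-\langle\mu_t,\varphi\rangle}\bigr] = e^{-\langle\mu_0, U_t(\varphi)\rangle}, \qquad \varphi \in C_c^\infty(\RR^2),\ \varphi \geq 0,
\]
where $U_t(\varphi)$ solves \eqref{equ.nonlinear_PAM_RSBM}, with existence, uniqueness and continuous dependence on $\varphi$ supplied by the theory developed in Section \ref{sec.variant}. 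This identity determines the law of $\mu_t$; combined with the Markov property inherited from $\mu^n$ and the semigroup relation $U_{t+s}(\varphi) = U_t(U_s(\varphi))$, it pins down the law of $\mu$ on path space, giving uniqueness and verifying condition (1) of Definition \ref{def.beta_RSBM}. Theorem \ref{thm.equivalent_def} then produces the martingale characterizations (2) and (3) for free.

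\textbf{Laplace functional of the particle system.} Applying the generator \eqref{equ.generator} to the exponential functional $F(\eta) = \exp(-\langle\epsilon\eta,\varphi\rangle)$ and using the explicit form of $g^n$ shows that $v^n_t(x) := -\log \EE\bigl[e^{-\langle\mu^n_t,\varphi\rangle} \,\big|\, \mu^n_0 = \epsilon^{-1}\delta_x\bigr]$ solves a discrete nonlinear equation
\[
    \partial_t v^n_t = \fH^n v^n_t - \fN^n(v^n_t),
\]
where $\fN^n$ is the finite-difference nonlinearity obtained from Taylor-expanding $1 - g^n\bigl(x, 1 - (1 - e^{-\epsilon v^n_t})\bigr)$ and dividing by $\epsilon$. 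The scaling $\epsilon = n^{-1/\beta}$ is tailored so that $\fN^n(v) \to \varkappa\, v^{1+\beta}$ in weighted spaces, with $\varkappa$ determined by $\nu$ (from Assumption \ref{ass.environment}) and the universal leading coefficient in the Taylor expansion of $g$ near $1$. The convergence $v^n_t \to U_t(\varphi)$ in an appropriate weighted topology then follows from the well-posedness and stability results of Section \ref{sec.variant}, together with the convergence $\fH^n \to \fH$ built into Assumption \ref{ass.environment}.

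\textbf{Tightness and the main obstacle.} The hardest step is tightness of the measure-valued processes. I would apply Aldous' criterion to each real-valued projection $t \mapsto \langle\mu^n_t,\varphi\rangle$ with $\varphi \in C_c^\infty(\RR^2)$, which reduces to a uniform-in-$n$ bound on $\EE\bigl[\sup_{0\leq s\leq t}\langle\mu^n_s,\varphi\rangle^{1+\theta}\bigr]$ for some $\theta \in (0,\beta)$. This is the genuine obstacle: the offspring distribution has only moments of order strictly less than $1+\beta$, so the $L^2$ martingale arguments of \cite{perkowski2021rough} are not available. My plan is to dominate $\mu^n$ pathwise by a simpler branching random walk in which the environment is replaced by $\xi^n_+$ and the reproduction law is truncated to have finite $(1+\theta)$-th moment, and then to control the moments of this dominating process by combining a Burkholder-Davis-Gundy estimate for the purely discontinuous martingale part of its semimartingale decomposition with a Feynman-Kac representation governed by the weighted PAM estimates from Section \ref{sec.variant}. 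Together with the compact containment following from weighted moment bounds on $\mu^n$, this upgrades the Aldous estimate on each linear functional to tightness in $\DD([0,\infty),\fM_F(\RR^2))$, after which the Laplace functional identification from the previous paragraph completes the proof.
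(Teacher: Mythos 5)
The overall architecture (Laplace functional of the particle system, tightness via Aldous, identification of the limit through Section~\ref{sec.variant}) matches the paper for the specific value $\varkappa = \frac{2\nu}{1+\beta}$, but your proposal as written does not prove the theorem as stated, and the central technical step is wrong in direction.

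First, the theorem asserts existence for \emph{every} $\varkappa > 0$, whereas the particle system of Definition~\ref{def.discrete_mu} with $\varrho=\beta$ only produces $\varkappa = \frac{2\nu}{1+\beta}$. The paper handles general $\varkappa$ by mixing: it superposes an extra branching channel with rate $c|\xi^n|$ and the universal mechanism $g$ so that the limiting coefficient becomes $\frac{2\nu}{1+\beta}(1+c)$. Your argument omits this entirely, so at best it proves a special case.

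Second, and more seriously, your dominating process cannot dominate. You propose to couple $\mu^n$ with a branching random walk whose rate is $\xi^n_+$ and whose reproduction law is \emph{truncated} to have finite $(1+\theta)$-th moment. Both choices go the wrong way: a truncated offspring law has lighter tails than $g^n$ and therefore cannot stochastically dominate it, and $\xi^n_+\leq |\xi^n|$ means strictly fewer branching events. The paper's Lemma~\ref{lem.coupling} instead keeps the same heavy-tailed branching rate $|\xi^n|$ and the same heavy-tailed universal mechanism $g$, and achieves domination by taking $N(\beta)$ \emph{independent copies} of the auxiliary walk; the existence of such an $N(\beta)$ follows from a tail comparison at large $k$ plus a CLT-type argument at small $k$, using $q_0(x)\geq\frac{1-\beta}{1+\beta}>0$ and $q_k(x)\leq 2$. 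Note that for $\theta<\beta$ the law encoded by $g$ already has a finite $(1+\theta)$-th moment, so no truncation is needed or possible. The reason for passing to $\tilde\mu^n$ is also not a moment issue per se: the auxiliary walk has position-independent criticality ($g'(1)=1$), so $\langle\tilde\mu^n_s,P^n_{t-s}\varphi\rangle$ is a martingale with drift controlled by $\Delta^n\varphi$, avoiding the problem of uniformly bounding $\fH^n\varphi^n$ for $\varphi^n\to\varphi\in\fD_\fH$. From there the paper applies Doob's $L^{1+\theta}$ maximal inequality to the martingale $M_t=\langle\tilde\mu^n_t,\varphi\rangle-\langle\epsilon Z_0^n,\varphi\rangle-\int_0^t\langle\tilde\mu^n_s,\Delta^n\varphi\rangle ds$ rather than a Burkholder--Davis--Gundy bound, which is both simpler and robust in the sub-$L^2$ regime. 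Fixing your coupling direction and incorporating the $N(\beta)$-copy device and the mixing step are the essential missing ingredients.
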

The proof is given in section \ref{sec.existence}. For the case $\varkappa < \frac{2}{1+\beta}$, it is the limit of $\mu_n$. And for more general case, we do the mixing method as in the paper \cite{perkowski2021rough}.

As a direct consequence of the method in \cite{jin2023compact} and a coupling method, we can show the compact support property and the super-exponentially persistence of the $\beta-$super Brownian motion.
\begin{definition}
    We say a stochastic process with value in $\fM_F(\RR^2)$ is super-exponentially persistent if, for any non-zero positive function $\varphi\in C_c^\infty(\RR^2)$ and for all $\lambda > 0$, it holds that 
    \[
        \PP\left[\lim_{t\rightarrow\infty}e^{-t\lambda}\langle\mu(t),\varphi\rangle = \infty\right] > 0.
    \]
\end{definition}

\begin{theorem}\label{thm.RSBM_SEP}
    The $\beta-$rough super Brownian motion is super-exponentially persistent.
\end{theorem}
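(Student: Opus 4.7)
The plan is to adapt the eigenfunction-martingale argument of \cite{jin2023compact} to the infinite-variance setting: the $L^2$ Doob estimate used there is replaced by an $L^{1+\theta}$ estimate for $\theta\in(0,\beta)$, drawn from the purely-discontinuous martingale structure of Definition~\ref{def.beta_RSBM}. A coupling reduces the analysis to a large Dirichlet box on which $\fH$ has a genuine ground state, which is unavailable on $\RR^2$.

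For each $L>0$ I consider the Anderson Hamiltonian $\fH_L$ on $B_L=[-L,L]^2$ with Dirichlet boundary conditions and its strictly positive principal eigenfunction $\phi_L\in\fD_{\fH_L}$ with top eigenvalue $\lambda_L$; a standard spectral analysis for the Anderson Hamiltonian in the style of \cite{hairer2015simple,martin2019paracontrolled} gives $\lambda_L\to+\infty$ as $L\to\infty$. At the particle level I couple $\mu^n$ with an absorbed variant $\mu^{n,L}$ obtained by killing a walker the first time it exits $B_L$. Passing to the limit as in Section~\ref{sec.existence} yields a process $\mu^L$ supported in $\overline{B_L}$, satisfying the analogue of Definition~\ref{def.beta_RSBM} with $\fH$ replaced by $\fH_L$, and with $\langle\mu_t,\varphi\rangle\geq\langle\mu^L_t,\varphi\rangle$ for every nonnegative $\varphi$.

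Applying Definition~\ref{def.beta_RSBM}(3) to $\mu^L$ with $\phi=\phi_L$ and multiplying by the integrating factor $e^{-\lambda_L t}$ shows that
\[
    N^L_t := e^{-\lambda_L t}\langle\mu^L_t,\phi_L\rangle
\]
is a nonnegative, purely discontinuous local martingale whose compensator has Lévy density proportional to $e^{-(1+\beta)\lambda_L s}\langle\mu^L_s,\varkappa\phi_L^{1+\beta}\rangle\, y^{-\beta-2}\1_{y>0}\,dy\,ds$ after the jump rescaling $y\mapsto e^{-\lambda_L s}y$. Expanding $x\mapsto x^{1+\theta}$ via Ito's formula for $\theta\in(0,\beta)$ and combining the pointwise bound $\phi_L^{1+\beta}\leq\|\phi_L\|_\infty^\beta\phi_L$ with the identity $\langle\mu^L_s,\phi_L\rangle=e^{\lambda_L s}N^L_s$ produces a Gronwall inequality with exponentially damped right-hand side, which yields the uniform bound $\sup_{t\geq 0}\EE[(N^L_t)^{1+\theta}]<\infty$. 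Doob's convergence theorem then gives $N^L_t\to N^L_\infty$ almost surely with $\EE[N^L_\infty]=\langle\mu_0,\phi_L\rangle$, and this is strictly positive once $L$ is large enough that $\mathrm{supp}(\mu_0)\cap\{\phi_L>0\}\neq\emptyset$; hence $\PP[N^L_\infty>0]>0$.

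On this event $\langle\mu^L_t,\phi_L\rangle\gtrsim e^{\lambda_L t}$; a short-time spreading coupling, propagating mass from the localization region of $\phi_L$ into $\mathrm{supp}(\varphi)$ through the underlying random walk step and using $\mu_t\geq\mu^L_t$, upgrades this to $\langle\mu_t,\varphi\rangle\gtrsim c\,e^{\lambda_L t}$ for any prescribed nonzero $\varphi\in C_c^\infty(\RR^2)$. Taking $L_k\uparrow\infty$ with $\lambda_{L_k}\to\infty$ then beats any exponential rate $\lambda>0$, which is exactly super-exponential persistence. The main obstacle is the uniform $L^{1+\theta}$ bound for $N^L_t$: unlike the $L^2$ argument of \cite{jin2023compact}, the heavy-tailed jumps force a subquadratic Ito expansion, and the resulting Gronwall inequality only closes because the eigenvalue damping factor $e^{-\beta\lambda_L t}$ counteracts the compensator's growth in $N^L_t$.
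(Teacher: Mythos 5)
Your proposal takes a genuinely different route from the paper. The paper proves super-exponential persistence by \emph{not} re-doing the eigenvalue/martingale analysis at all: it works at the particle level and couples the finite-variance ($\beta=1$) mixed branching random walk $\bar{\mu}^n$ of \cite{perkowski2021rough} with a sum of $N(\beta)$ independent copies of the infinite-variance particle system $\mu^{n,i}$, using the elementary observation $\PP[Z_0=0]\geq\PP[Z_1=0]^{N(\beta)}$ (valid since $c>0$ forces $\PP[Z_0=0]>0$ and $\PP[Z_1=0]<1$) to obtain the domination $\langle\bar{\mu}^n_t,\varphi\rangle\leq\sum_{i=1}^{N(\beta)}\langle\mu^{n,i}_t,\varphi\rangle$. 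Since the $\beta=1$ rough SBM is already known to be super-exponentially persistent, the event $A_0$ forces $A_0\subset\bigcup_i A_i$, hence $\PP[A_1]>0$. This reduces the whole theorem to a few lines and leans entirely on the prior result, sidestepping all of the spectral machinery.

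Your approach instead re-builds the argument of \cite{perkowski2021rough} from scratch: Dirichlet Anderson Hamiltonian $\fH_L$ on $B_L$ with principal eigenfunction $\phi_L$, eigenvalue $\lambda_L\to\infty$, killed process $\mu^L$, the discounted martingale $N^L_t=e^{-\lambda_L t}\langle\mu^L_t,\phi_L\rangle$, an $L^{1+\theta}$ (rather than $L^2$) estimate via Ito on $x\mapsto x^{1+\theta}$ with $\theta<\beta$, uniform integrability, and finally a short-time spreading argument. The Gronwall closure you describe does work: the jump integral $\int_0^\infty[(a+y)^{1+\theta}-a^{1+\theta}-(1+\theta)a^\theta y]y^{-\beta-2}dy\propto a^{\theta-\beta}$ combined with $\phi_L^{1+\beta}\leq\|\phi_L\|^\beta_\infty\phi_L$ and $\langle\mu^L_s,\phi_L\rangle=e^{\lambda_L s}N^L_s$ gives an $e^{-\beta\lambda_L s}(1+\EE[(N^L_s)^{1+\theta}])$ bound after Young's inequality, which Gronwall then closes. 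This is correct in outline, and it buys you a self-contained proof not reliant on \cite{perkowski2021rough}'s persistence result. The cost is that you must (a) actually construct $\fH_L$, its principal eigenpair, and verify $\lambda_L\to\infty$ in the singular setting; (b) justify that the killed limit process $\mu^L$ solves the Dirichlet-box analogue of the martingale problem of Definition~\ref{def.beta_RSBM}(3), which requires re-deriving the convergence of Section~\ref{sec.existence} on a bounded domain; and (c) make the final ``short-time spreading'' from $\phi_L$ to an arbitrary $\varphi\in C_c^\infty$ precise. These are exactly the ingredients the paper avoids by its coupling, and they are nontrivial even though each has a known template in \cite{perkowski2021rough}. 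So your proof is a viable but substantially longer replacement; the paper's is sharper precisely because it exploits the existing $\beta=1$ theorem rather than re-proving it.
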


\begin{definition}
    We say a stochastic process with value in $\fM_F(\RR^2)$ possesses the compact support property if 
    \[
        \PP\left[\bigcup_{0\leq s \leq t}supp(\mu_s)\text{ is compact}\right] = 1,
    \]
    for any $t \geq 0$.
\end{definition}

\begin{theorem}\label{thm.RSBM_CSP}
    The $\beta-$rough super Brownian motion possesses the compact support property.
\end{theorem}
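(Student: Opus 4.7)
The strategy parallels \cite{jin2023compact} for the finite-variance case and combines the log-Laplace identity $\EE[e^{-\langle\mu_s,\varphi\rangle}]=e^{-\langle\mu_0,U_s(\varphi)\rangle}$ from Theorem~\ref{thm.equivalent_def} with a ``coming down from infinity'' bound for the non-linear PAM \eqref{equ.nonlinear_PAM_RSBM}. The heuristic is the space-homogeneous ODE $\partial_t u=-\varkappa u^{1+\beta}$, whose solution from $+\infty$ equals $(\varkappa\beta t)^{-1/\beta}$: the damping $-\varkappa u^{1+\beta}$ regularises the initial data instantly, which allows one to send a cut-off test function to $+\infty$ outside a large ball and still make sense of the right-hand side of the log-Laplace identity.

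First I would establish, for any non-negative compactly supported $\varphi_0$, a uniform bound $\sup_{\lambda\geq 0}\|U_s(\lambda\varphi_0)\|_{C_T C^\alpha(\RR^2,e(l))}<\infty$. This would be obtained from the mild formulation
\[
U_s(\lambda\varphi_0)=T_s(\lambda\varphi_0)-\varkappa\int_0^s T_{s-r}U_r(\lambda\varphi_0)^{1+\beta}\,dr,
\]
together with the solution theory for the variant PAM from section~\ref{sec.variant}; the non-linear term absorbs the blow-up in $\lambda$ of the first term because $(\varkappa\beta r)^{-1/\beta}$ is already an ODE super-solution. Once this uniform bound is in hand, the weighted Besov smoothing of $T_s$ yields spatial decay: for $\varphi_R$ regularising $\mathbf{1}_{B_R^c}$, the function $U_s(\lambda\varphi_R)(x)$ tends to $0$ as $R\to\infty$, uniformly in $\lambda\geq 0$ and in $x$ on $\text{supp}(\mu_0)$, and hence via the log-Laplace identity $\PP[\mu_s(B_R^c)>0]\to 0$.

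To upgrade this to the supremum over $s\in[0,t]$ I would use a coupling with restarted processes, together with the Markov property of the $\beta$-rough SBM (which follows from Theorem~\ref{thm.equivalent_def}, definition~1). Partitioning $[0,t]$ into $N$ subintervals $[t_i,t_{i+1}]$ and applying the single-time estimate on each subinterval, conditional on $\fF_{t_i}$, with an enlarged ball $B_{R'}\supset B_R$, one gets
\[
\PP\Bigl[\sup_{s\leq t}\mu_s(B_{R'}^c)>0\Bigr]\leq \PP[\text{supp}(\mu_0)\not\subset B_R]+\sum_{i=0}^{N-1}\EE\Bigl[1-e^{-\langle\mu_{t_i},\,U_{t_{i+1}-t_i}(\lambda\mathbf{1}_{B_{R'}^c})\rangle}\Bigr],
\]
and tuning $N$ and then $R,R'\to\infty$ together with finiteness of $\sup_{s\leq t}\EE[\langle\mu_s,1\rangle]$ (itself a consequence of the Laplace functional applied to a constant test function) yields the compact support property.

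The main obstacle is the uniform bound in the first step, because $\fH$ has no classical pointwise maximum principle and one cannot directly compare $U_s(\lambda\varphi_0)$ with the scalar ODE. The work-around is to substitute the ODE super-solution into the mild formulation and iterate in the weighted Besov spaces of section~\ref{sec.variant}, exploiting the favourable sign of $-\varkappa u^{1+\beta}$ already used there in the existence and uniqueness proof. A secondary difficulty is justifying the limit $\lambda\to\infty$ inside the $\fH$-heat semigroup; this is handled by monotonicity of $U_s(\cdot)$ in the initial data, itself a consequence of a particle-level coupling between the approximating branching random walks of Definition~\ref{def.discrete_mu}.
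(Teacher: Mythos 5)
Your plan has a genuine gap at the step that upgrades the fixed-time estimate to the supremum over $s\in[0,t]$. The partition inequality
\[
\PP\Bigl[\sup_{s\leq t}\mu_s(B_{R'}^c)>0\Bigr]\leq \PP[\mathrm{supp}(\mu_0)\not\subset B_R]+\sum_{i=0}^{N-1}\EE\Bigl[1-e^{-\langle\mu_{t_i},\,U_{t_{i+1}-t_i}(\lambda\mathbf{1}_{B_{R'}^c})\rangle}\Bigr]
\]
is false as written: by the Markov property the $i$-th summand equals $1-\EE[e^{-\lambda\langle\mu_{t_{i+1}},\mathbf{1}_{B_{R'}^c}\rangle}]$, and letting $\lambda\to\infty$ only yields $\PP[\mu_{t_{i+1}}(B_{R'}^c)>0]$. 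So the right-hand side controls the event that some \emph{partition point} charges $B_{R'}^c$, not the event that \emph{some} $s\in[0,t]$ does. Mass can leave $B_{R'}$ between $t_i$ and $t_{i+1}$ and return before $t_{i+1}$, and refining the partition ($N\to\infty$) produces a sum of $N$ terms each of which does not shrink, so there is no way to tune the parameters. In effect, the step you need is exactly ``control $\sup_{s\in[t_i,t_{i+1}]}$'', which is the very thing you are trying to prove; the finite-time Markov decomposition is circular.

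The paper avoids this by replacing fixed-time Laplace functionals with the occupation-time functional $\int_0^t\langle\mu_r,\phi_n^m\rangle dr$, computed via the martingale for $U_t^{\phi_n^m}0$ (where the inhomogeneity $\phi_n^m$ blows up outside a box, not the initial data). Because $\mu$ is c\`adl\`ag, the event that the path ever charges the complement of the box forces the occupation time to be strictly positive, so $\lim_{m\to\infty}\EE[e^{-\int_0^t\langle\mu_r,\phi_n^m\rangle dr}]$ already captures the whole-path event with no time partition. This also sidesteps the secondary difficulty you flag: with the unboundedness in the source term and initial data $0$, there is no singularity of $U$ at $t=0$, so the uniform bound you want (which cannot hold at $s=0$ for $U_s(\lambda\varphi_0)$, since $U_0(\lambda\varphi_0)=\lambda\varphi_0$) becomes an interior estimate as in Lemma~\ref{lem.local_est_weighted.nonlinear_PAM}, which is then combined with a compactness/uniqueness argument to show the localized limit solves $\partial_t U=\fH U-U^{1+\beta}$ with $U_0=0$, hence vanishes.
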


\section{On variants of PAM}\label{sec.variant}
In this section, we discuss the required knowledge of SPDEs to construct the $\beta-$rough super Brownian motion. To start with, let's give the solution theory of PAM on $\ZZ_n^2$ and $\RR^2$. Details can be found in \cite{martin2019paracontrolled}. Let $\kappa' < \kappa\in \left(0,\frac{1}{4}\right)$ be a regularity parameter and $T > 0$ be the time horizon.
\begin{theorem}\label{thm.PAM}
    Consider $\varphi\in C^{1-\kappa}(\ZZ_n^2,e(l))$ and $\varphi^{\#}\in C^{1+2\kappa}(\ZZ_n^2,e(l))$. Then the equation  
    \begin{equation}\label{equ.PAM}
        \left\{
        \begin{array}{ll}
            \partial_t w^n_t = \fH^nw^n_t + f^n,\\
            w^n_0 =\varphi^n,
        \end{array}
    \right.
\end{equation}
admits a unique solution with the estimations
\begin{equation}\label{equ.PAM_estimation}
    \|w^n\|_{\fL_T^{1-\kappa}(\ZZ_n^2,e(l+T))} \lesssim \|f^n\|_{\fM^\gamma C^{-1+2\kappa}(\ZZ_n^2,e(l))} + \|\varphi\|_{\fD(\ZZ_n^2,e(l))}.
\end{equation}
The constant in $'\lesssim'$ is uniform over $n$. Furthermore, if the functions $\fE^n \varphi^n \rightarrow \varphi$ and $\fE^n f^n \rightarrow f$, then we have the convergence of solution $w^n$ to the solution of continuum PAM with the same estimation as \eqref{equ.PAM_estimation}.
\end{theorem}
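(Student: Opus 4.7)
My plan is to view this as the natural inhomogeneous-plus-weighted extension of the linear PAM theory developed in \cite{martin2019paracontrolled}, and to build the solution from the para-controlled Anderson semigroup $T_t^n$ via Duhamel's formula. Writing
\[
    w^n_t = T_t^n \varphi^n + \int_0^t T^n_{t-s} f^n(s)\, ds,
\]
the analysis splits into two pieces. The homogeneous part is exactly the content of \cite{martin2019paracontrolled}, applied to the para-controlled datum $\varphi^n$ in the weighted space $\fD(\ZZ_n^2,e(l))$, and yields $\|T_\cdot^n\varphi^n\|_{\fL^{1-\kappa}_T(\ZZ_n^2,e(l+T))}\lesssim\|\varphi^n\|_{\fD(\ZZ_n^2,e(l))}$ uniformly in $n$. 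Only the inhomogeneous term requires genuinely new (though routine) work.

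For the Duhamel integral I would use the parabolic Schauder bound for $T_{t-s}^n$ between weighted H\"older spaces,
\[
    \|T^n_{t-s} g\|_{C^{1-\kappa}(\ZZ_n^2, e(l+t))} \lesssim (t-s)^{-(1-\tfrac{3\kappa}{2})} \|g\|_{C^{-1+2\kappa}(\ZZ_n^2, e(l+s))},
\]
derived from the corresponding bound on $P_t^n$ together with the para-controlled perturbation argument underlying $T_t^n$. Inserting $g=f^n(s)$ with $\|f^n(s)\|_{C^{-1+2\kappa}}\lesssim s^{-\gamma}$ and evaluating the beta-type integral $\int_0^t s^{-\gamma}(t-s)^{-(1-\tfrac{3\kappa}{2})}\,ds$, which is finite in the admissible range of $\gamma$ determined by $\kappa$, yields the $\fM^\gamma C^{1-\kappa}$-part of the $\fL^{1-\kappa}_T$ norm. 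The time-H\"older piece follows by the standard telescoping $w^n_t-w^n_s=(T^n_{t-s}-\mathrm{Id})w^n_s+\int_s^t T^n_{t-r}f^n(r)\,dr$ combined with $\|(T_h^n-\mathrm{Id})v\|_{L^\infty}\lesssim h^{(1-\kappa)/2}\|v\|_{C^{1-\kappa}}$. Uniqueness is immediate from linearity and the uniform estimate, and the convergence of $\fE^n w^n$ to the continuum solution is obtained by applying the same uniform estimate to the difference of two such sequences, using $\fE^n\varphi^n\to\varphi$ and $\fE^n f^n\to f$ together with the asymptotic commutation of $\fE^n$ with $\fH^n$ and with the para-product from Assumption \ref{ass.environment}.

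The main obstacle is the bookkeeping of the exponential weight $e(l)$ through both the para-controlled fixed point internal to the construction of $T_t^n$ and the Duhamel estimate above. Every application of $P_t^n$, of a para-product, or of a commutator costs a small increment in $l$, which the definitions of $C_TC^\alpha(\ZZ_n^2,e(l))$ and $\fL^{\gamma,\alpha}(\ZZ_n^2,e(l))$ absorb by making the target weight $e(l+t)$. The non-trivial point is verifying that the commutator and resonance estimates underlying the para-controlled calculus, together with the renormalization identity for $\xi^n\odot\fI\xi^n-c_n$ given by Assumption \ref{ass.environment}, transport correctly through these subexponential weights, uniformly in $n$. Once that weighted commutator/resonance package is in place, the remainder of the argument is parallel to the flat-weight theory of \cite{martin2019paracontrolled}.
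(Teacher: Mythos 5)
The paper does not prove this theorem; it is stated and delegated to \cite{martin2019paracontrolled} (``Details can be found in [that reference]''), which already treats the inhomogeneous, weighted, discrete PAM with uniform-in-$n$ estimates and convergence under the extension operator $\fE^n$. Your outline is therefore reconstructing the cited result rather than supplying an independent argument, and as a reconstruction it is essentially circular where it matters: the two inputs you invoke as black boxes --- the mapping $T_t^n:\fD(\ZZ_n^2,e(l))\to\fL^{1-\kappa}_T(\ZZ_n^2,e(l+T))$, and the weighted Schauder bound $\|T^n_{t-s}g\|_{C^{1-\kappa}(\ZZ_n^2,e(l+t))}\lesssim (t-s)^{-(1-3\kappa/2)}\|g\|_{C^{-1+2\kappa}(\ZZ_n^2,e(l+s))}$ --- \emph{are} the content of the homogeneous case of \eqref{equ.PAM}. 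In \cite{martin2019paracontrolled} these are obtained by a direct fixed point in the weighted paracontrolled space, via the ansatz $w^n=w^n\para\fI\xi^n+w^{n,\#}$ and the renormalized resonance $\xi^n\odot\fI\xi^n-c_n$ from Assumption \ref{ass.environment}, and the inhomogeneous source is absorbed into that same fixed point rather than added by a posterior Duhamel iteration.

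Granting those semigroup bounds, the rest of your outline is fine: the Duhamel split, the Beta-integral bookkeeping for the $\fM^\gamma$ blow-up (which implicitly requires $\gamma\leq\tfrac{3\kappa}{2}$ --- worth stating), the telescoping for the time-H\"older piece, and uniqueness from linearity. Your identification of the weighted commutator/resonance estimates and their uniformity in $n$ as the crux is also accurate. But since \cite{martin2019paracontrolled} proves the inhomogeneous statement directly, the ``genuinely new (though routine) work'' you attribute to the Duhamel term is already part of what the citation delivers; your proposal is consistent with the paper's (cited) proof rather than a different route to it.
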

\begin{remark}
    When the initial condition $\varphi$ is not paracontrolled but smooth, we could operate as follows. Consider $\tilde{w}_t = w_t - \varphi^n$, then the function $w_t$ satisfies 
    \begin{equation}\label{equ.PAM_smooth_initial}
                \left\{
        \begin{array}{ll}
            \partial_t \tilde{w}_t = \fH^n\tilde{w}_t + f^n + \fH^n\varphi^n\\
            \tilde{w}_0 = 0
        \end{array}
    \right.
    \end{equation}
    Now if $\varphi^n \in C^{1+2\kappa}(\ZZ_n^2,e(l))$ for some $l \in \RR$, then we see that $\fH^n\varphi^n$ is well-defined and belongs to the space $C^{-1+2\kappa}(\ZZ_n^2,e(l)p(a))$ for any $a > 0$. Then theorem \ref{thm.PAM} can be applied to equation \eqref{equ.PAM_smooth_initial} and obtain estimations on the solution $w_t$.
\end{remark} 
\subsection{Discrete case}
In this section, we fix a positive integer $n$ and consider equation on the discrete space-time $\RR_+\times\ZZ_n^2$. We now turn to the consideration of a non-linear discrete PAM.
    \begin{equation}\label{equ.nonlinear_PAM}
        \left\{
        \begin{array}{ll}
            \partial_t w^n_t = \fH^n w^n_t - f(w^n_t,\xi^n)w^n_t, & \text{ in }(0,\infty]\times\ZZ_n^2,\\
            w^n_0 =\varphi^n \geq 0, & \text{ on }\ZZ_n^2,\\
            w^n_t \geq 0, & \text{ }\forall t \geq 0,
        \end{array}
    \right.
\end{equation}
where $f$ is a non-negative function. We give assumptions on $f$ here:
\begin{assumption}\label{ass.nonlinearity}

    There exists some $\alpha > 0$, 
    \[
    |f(x,y)| \lesssim 1 + |x|^\alpha,
\] and the three-variables function 
\[
    \frac{f(x_1,y)x_1 - f(x_2,y)x_2}{x_1-x_2},
\]
is locally bounded on $(\ZZ_n^2)^3$.
\end{assumption}
For \eqref{equ.nonlinear_PAM}, we have the following theorem.
\begin{theorem}\label{thm.nonlinear_PAM}
    Suppose $f$ satisfies assumption \ref{ass.nonlinearity}. Let $\varphi^n \in C^{1+2\kappa}(\ZZ_n^2,e(l))$ for some $l \in \RR$. Then the equation \eqref{equ.nonlinear_PAM} admits a unique solution $w_t^n$ in the space $\cup_{\hat{l}\in\RR,\sigma\in(0,1)} \fL^{1-\kappa}(\ZZ_n^2,e(\hat{l}))$ and we have the estimation
    \[
        \|w^n_t\|_{\fL^{1-\kappa}(\ZZ_n^2,e(l'+(1+\alpha)t))} \lesssim \|\varphi^n\|_{C^{1+2\kappa}(\ZZ_n^2,e(l))}(1+\|\varphi^n\|_{C^{1+2\kappa}(\ZZ_n^2,e(l))}^\alpha),
    \]
    for any $l' >(1+\alpha)l$.
\end{theorem}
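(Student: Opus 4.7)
The plan is to combine a mild-form Picard iteration with a comparison-based a priori bound coming from the nonnegativity of $w^n$. Following the remark after Theorem~\ref{thm.PAM}, I first move the smooth-but-not-paracontrolled initial data into the forcing by setting $\tilde w_t := w^n_t - \varphi^n$, so that $\tilde w_0 = 0$ (trivially paracontrolled) and
\[
\partial_t \tilde w = \fH^n \tilde w + \fH^n\varphi^n - f(\tilde w+\varphi^n,\xi^n)(\tilde w+\varphi^n).
\]
On a short time interval $[0,\tau]$ I would run a Picard iteration for the associated mild formulation in $\fL^{1-\kappa}(\ZZ_n^2, e(\hat l))$ with $\hat l$ chosen slightly larger than $(1+\alpha)l$: Theorem~\ref{thm.PAM} controls the action of $T^n$, while Assumption~\ref{ass.nonlinearity} (polynomial growth plus local boundedness of the difference quotient of $x\mapsto f(x,y)x$) gives the local Lipschitz continuity of $u \mapsto f(u+\varphi^n,\xi^n)(u+\varphi^n)$ between the relevant weighted H\"older spaces. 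For $\tau$ small enough this contracts and yields local existence and uniqueness.

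Next, I extend to global existence via a pointwise a priori bound. Positivity of $w^n$ follows from a discrete maximum principle: writing the equation as $\partial_t w^n = \Delta^n w^n + (\xi^n_e - f(w^n,\xi^n))w^n$ and using that $\Delta^n$ has nonnegative off-diagonal entries, at a first zero of $w^n$ the right-hand side is nonnegative, so $w^n$ cannot become negative. Given $w^n \geq 0$ and $f \geq 0$, the difference $u_t := T^n_t\varphi^n - w^n_t$ solves a linear PAM with nonnegative source $f(w^n,\xi^n)w^n$ and zero initial data, hence $u \geq 0$, so
\[
0 \leq w^n_t(x) \leq T^n_t\varphi^n(x), \qquad x \in \ZZ_n^2,\ t \geq 0.
\]
By Theorem~\ref{thm.PAM} the right-hand side is bounded in $\fL^{1-\kappa}(\ZZ_n^2, e(l+t))$ by $\|\varphi^n\|_{C^{1+2\kappa}(\ZZ_n^2,e(l))}$, which rules out blow-up of the $L^\infty$ norm and allows the local solution to be extended globally and uniquely.

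To upgrade the pointwise bound to the advertised $\fL^{1-\kappa}$ estimate with weight $e(l'+(1+\alpha)t)$, I plug it back into the mild equation for $w^n$: the forcing satisfies $|f(w^n_s,\xi^n)w^n_s| \lesssim T^n_s\varphi^n + (T^n_s\varphi^n)^{1+\alpha}$, which lives in $L^\infty(\ZZ_n^2, e((1+\alpha)(l+s)))$ with norm $\lesssim \|\varphi^n\|_{C^{1+2\kappa}(\ZZ_n^2,e(l))}(1+\|\varphi^n\|^\alpha_{C^{1+2\kappa}(\ZZ_n^2,e(l))})$. Feeding this forcing together with $\varphi^n$ (via the remark trick) back into Theorem~\ref{thm.PAM}, and taking $l' > (1+\alpha)l$ to absorb the extra weight growth incurred by $T^n_{t-s}$ over the Duhamel integral, yields the claimed estimate.

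The main obstacle is the weight bookkeeping in this last step: one must verify that powers of a weighted H\"older function remain weighted H\"older with the expected amplification of the weight, uniformly in $n$, and that the strict inequality $l' > (1+\alpha)l$ is exactly what is needed to absorb the semigroup's own weight drift on $[0,t]$ when tracking the forcing through Theorem~\ref{thm.PAM}. The local Picard step also requires some care because $\varphi^n$ is only $C^{1+2\kappa}$, not paracontrolled, which is why the subtraction of $\varphi^n$ at the outset is essential.
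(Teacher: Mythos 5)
Your plan shares the paper's key a priori ingredient --- the pointwise comparison $0\le w^n_t\le T^n_t\varphi^n$ and the subsequent bootstrap through Theorem~\ref{thm.PAM} to get the $\fL^{1-\kappa}$ bound with the $(1+\alpha)$-amplified weight --- but it reaches existence, positivity, and uniqueness by a genuinely different route, and that route has a real gap that you did not flag.

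The paper constructs $w^n$ as a fixed point of the map $\fK:\phi\mapsto$ (minimum solution of the variant PAM $\partial_t w=(\fH^n-\phi)w$, $w_0=\varphi^n$), using the Feynman--Kac representation of Lemma~\ref{lem.variant_PAM}. This has two decisive advantages: (i) $\fK(\phi)$ is \emph{manifestly} nonnegative, so positivity never has to be argued a posteriori, and (ii) the $L^\infty$ bound $\|\fK(\phi)_t\|_{L^\infty(\ZZ_n^2,e(l+t))}\lesssim\|\varphi^n\|_{C^{1+2\kappa}(\ZZ_n^2,e(l))}$ holds \emph{uniformly in $\phi\ge 0$}, so the whole space $C_T L^\infty(\ZZ_n^2,e(l+\cdot))$ is mapped into a weak-$*$ compact convex set and Schauder's theorem applies --- no Lipschitz structure on the nonlinearity is needed. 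Uniqueness then comes in one stroke from Remark~\ref{rem.variant_PAM}: the difference of two solutions solves a variant PAM with zero data and a bounded coefficient $\frac{f(\tilde w)\tilde w-f(w)w}{\tilde w-w}$ (Assumption~\ref{ass.nonlinearity}), hence vanishes identically, with no time localization and for solutions carrying \emph{any} exponential weight.

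Your route instead runs a Picard contraction and then derives nonnegativity from a discrete maximum principle at "a first zero of $w^n$." On the infinite lattice $\ZZ_n^2$ this is a genuine gap: with a decaying weight the infimum need not be attained, there may be no first zero, and the argument as stated does not close. It can be patched with a barrier or a weighted infimum argument, but that requires real work; alternatively you could iterate with the Feynman--Kac operator $\fK$ itself, in which case you are essentially back to the paper's construction. Two further, smaller points: (a) the Picard contraction needs the map $u\mapsto f(u,\xi^n)u$ to be Lipschitz into $C^{-1+2\kappa}(\ZZ_n^2,e(\hat l))$; Assumption~\ref{ass.nonlinearity} only gives a pointwise (local) Lipschitz bound, so you must invoke the fact that for fixed $n$ the discrete Besov norms are equivalent to weighted $L^\infty$, which you do not mention; (b) Picard gives local uniqueness in a fixed weighted ball, whereas the theorem claims uniqueness over $\bigcup_{\hat l,\sigma}\fL^{1-\kappa}(\ZZ_n^2,e(\hat l))$ --- your comparison bound lets you reduce to a common weight, but a continuation-in-time argument is also needed, and neither is spelled out. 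The "main obstacle" you identify (weight bookkeeping in the final step) is in fact the least problematic part; the positivity and uniqueness steps are where the work lies, and where the paper's Feynman--Kac device earns its keep.
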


To prove theorem \ref{thm.nonlinear_PAM}, we first give a lemma on representing solutions of a larger class of equations by Dynkin's formula. 
\begin{lem}\label{lem.Dynkin_formula_PAM}
    Suppose $R:\RR^3\rightarrow\RR$ is continuous and has at most polynomial growth. Let $\phi \in C_TL^\infty(\ZZ_n^2,e(l))$ and $v_t\in C_TL^\infty(\ZZ_n^2, e(l))$ be the solution to the equation 
    \begin{equation}
    \left\{
        \begin{array}{ll}
            \partial_t v_t = (\Delta^n + \xi^n)v_t - R(v_t,\xi^n,\phi_t), &\text{ in }(0,\infty]\times\ZZ_n^2,\\
            v_0 = \varphi,&\text{ on }\ZZ_n^2,
        \end{array}
    \right.
    \end{equation}
    for some $l\in\RR$. Then we have for $x \in \ZZ_n^2$
    \begin{equation}
        v_t(x) = \EE\left[\varphi(X^n_t) + \int_0^t(v_{t-s}\cdot\xi^n)(X^n_s) - R(v_{t-s},\xi^n,\phi_{t-s})(X_s^n)ds|X_0 = x\right].
    \end{equation}
\end{lem}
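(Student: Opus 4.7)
The plan is to derive the identity from Itô's (equivalently Dynkin's) formula applied to the time-dependent test function $F(s,y):=v_{t-s}(y)$ along the random walk $X^n$ started at $x$, for fixed terminal time $t>0$. Because for fixed $n$ the operator $\Delta^n$ and multiplication by $\xi^n$ are bounded on weighted $L^\infty(\ZZ_n^2)$ and $R$ is continuous with polynomial growth, the solution $v\in C_TL^\infty(\ZZ_n^2,e(l))$ of the ODE system has a pointwise $C^1$ time-derivative satisfying
\[
    \partial_r v_r(y) \;=\; \Delta^n v_r(y) + \xi^n(y)\,v_r(y) - R(v_r(y),\xi^n(y),\phi_r(y)),
\]
so $F$ is $C^1$ in $s$ with discrete Laplacian available pointwise in $y$.

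Next, the standard Dynkin formula for the continuous-time pure-jump Markov process $X^n$ with generator $\Delta^n$ gives, under $\PP_x$ and for $s\in[0,t]$,
\[
    F(s,X^n_s) \;=\; F(0,x) + \int_0^s\bigl(\partial_s F + \Delta^n F\bigr)(u,X^n_u)\,du + M_s,
\]
with $M$ a local martingale. Since $\partial_s F(s,y)=-\partial_r v_r(y)|_{r=t-s}$, substituting the PDE collapses the integrand to $\bigl(-\xi^n v_{t-u} + R(v_{t-u},\xi^n,\phi_{t-u})\bigr)(X^n_u)$. Evaluating at $s=t$, rearranging, and taking $\PP_x$-expectation yields the claimed identity, provided $\EE_x[M_t]=0$ and the time integral lies in $L^1(\PP_x)$.

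Both remaining points reduce to growth estimates. The hypothesis $v,\phi\in C_TL^\infty(\ZZ_n^2,e(l))$ gives $|v_r(y)|+|\phi_r(y)|\lesssim e^{(l+T)|y|^\sigma}$; by assumption \ref{ass.environment} $\xi^n$ is bounded in a polynomially weighted Besov space, hence grows at most polynomially in $|y|$ for fixed $n$; and $R$ has polynomial growth in all three arguments. The integrand is therefore dominated by $C(n,t,l)\,e^{\lambda|y|^\sigma}$ for some finite $C,\lambda$. Since $X^n$ is a continuous-time random walk on $\ZZ_n^2$ with bounded jump size $1/n$ and Poisson jump count, $\EE_x\!\left[e^{\lambda|X^n_u|^\sigma}\right]$ is finite and uniformly bounded on $[0,t]$ for any $\sigma<1$, which delivers Fubini-type integrability of the drift integral and the true-martingale property of $M$ simultaneously. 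The main obstacle is really just this bookkeeping of a single dominating weight; once in hand, the identity follows. Equivalently, one can read the formula as the Duhamel representation $v_t = P_t^n\varphi + \int_0^t P_{t-s}^n[\xi^n v_s - R(v_s,\xi^n,\phi_s)]\,ds$ rewritten via $P_r^n f(x)=\EE_x[f(X^n_r)]$ and the change of variables $s\mapsto t-s$.
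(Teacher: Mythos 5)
Your argument is essentially the same as the paper's: both apply Dynkin's formula to the time-reversed function $F(s,y)=v_{t-s}(y)$ along the random walk $X^n$, substitute the equation for $v$ to identify the drift integrand with $\xi^n v_{t-s} - R(v_{t-s},\xi^n,\phi_{t-s})$, and justify the expectation step via the exponential moment bound $\EE_x[e^{\lambda|X^n_u|^\sigma}]<\infty$ together with the polynomial growth of $\xi^n$ in the weighted spaces. The one place where you are lighter on detail is the invocation of the ``standard Dynkin formula'' for a time-dependent, exponentially growing test function on the infinite state space $\ZZ_n^2$: the paper does not take this as given but instead truncates $f^N(s,x)=v_{t-s}(x)\1_{|x|\leq N}$, applies Dynkin on each interval of a partition of $[0,t]$, sends $N\to\infty$, and then explicitly bounds the two Riemann-sum error terms before sending the mesh to zero. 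This truncation-plus-partition step is exactly the bookkeeping you defer to ``the standard formula,'' so your outline is correct in substance but would need that justification spelled out to be fully rigorous. Your closing remark that the identity is equivalent to the Duhamel formula $v_t = P_t^n\varphi + \int_0^t P_{t-s}^n[\xi^n v_s - R(\cdot)]\,ds$ read through $P_r^n f(x)=\EE_x[f(X_r^n)]$ is a valid and somewhat shorter alternative route, not the one the paper follows.
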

\begin{proof}
    As said in \cite{martin2019paracontrolled}, all the space $C^\alpha(\ZZ_n^2,e(l))$ are equivalent when $n$ and $l$ are fixed. The assumption on $\xi^n$ then tells us that the weighted supremum norm of $\xi^n$ is bounded. Together with the assumption on $R$, we see that $\partial_tv_t \in C^\alpha(\ZZ_n^2,e(\hat{l}))$ for some $\hat{l}$ and any $\alpha\in\RR$. In particular $\nabla^n\partial_tv_t \in L^\infty(\ZZ_n^2,e(\hat{l}))$.
    
    For each $N > 0$ and fixed $t > 0$, consider truncated function $f^N(s,x) = v_{t-s}(x)\1_{|x| \leq N}$. We also define $f(s,x) = v_{t-s}(x)$. Since $X_s^n$ is a strong Markov process with generator 
    \[
        \Delta^n\varphi(x) = n^2\sum_{y\sim x}(\varphi(y)-\varphi(x)),
    \]
    from Dynkin's formula, we calculate
    \begin{equation}
        \begin{aligned}
            \EE\left[f(t,X_t^n) - f(0,x)\right] &=\sum_{i} \lim_{N\rightarrow\infty}\EE\left[f^N(t_{i+1},X_{t_{i+1}}^n) - f^N(t_i,X_{t_i}^n)\right] \\
            &=\sum_i \lim_{N\rightarrow\infty}\EE\left[\int_{t_i}^{t_{i+1}}\Delta^nf^N(t_{i+1},X_s^n) + \partial_tf^N(s,X_{t_i}^n)ds\right]\\
            &=\sum_i \EE\left[\int_{t_i}^{t_{i+1}}\Delta^nf(t_{i+1},X_s^n) + \partial_tf(s,X_{t_i}^n)ds\right].
        \end{aligned}
    \end{equation}
    The first and third equations hold because the bounds for $v,\partial_tv$ and $\forall l_0\in\RR$ 
    \[
        \EE\left[e^{l_0\sup_{0\leq s\leq t}|X_s^n|}\right] < \infty.
    \]
    The error terms can be estimated as 
    \begin{equation*}
        \begin{aligned}
            &\sum_i\EE\left[\left|\int_{t_i}^{t_{i+1}}\Delta^n\left(f(t_{i+1},X_s^n) - f(s,X_s^n)\right)ds\right|\right]\\
            &\lesssim
            \sum_i\EE\left[\int_{t_i}^{t_{i+1}}\int_s^{t_{i+1}}e^{l|X_s^n|^\sigma}duds\right] \lesssim_n \sum_i(t_{i+1} - t_i)^2,
        \end{aligned}
    \end{equation*}
    and, let $M_t^n:=\sup_{0\leq s\leq t}|X_s^n|$,
    \begin{equation}
        \begin{aligned}
            &\sum_i\EE\left[\left|\int_{t_i}^{t_{i+1}}\partial_tf^N(s,X_{s}^n) - \partial_tf^N(s,X_{t_i}^n)ds\right|\right] \\
            &\lesssim \sum_i \EE\left[\int_{t_i}^{t_{i+1}}|X_{t_i}^n -X_s^n|e^{\hat{l}|M_t^n|^\sigma}ds\right] \\
            &\lesssim \sum_i\int_{t_i}^{t_{i+1}}\EE[|X_{t_i}^n - X_s^n|^2]\EE[e^{2\hat{l}|M_t^n|^\sigma}]ds \lesssim \sum_i (t_{i+1} - t_i)^3.
        \end{aligned}
    \end{equation}
    Thus we obtain that 
    \[ 
        \EE\left[f(t,X_t^n) - f(0,x)\right] =  \EE\left[\int_{0}^{t}\Delta^nf(s,X_s^n) + \partial_tf(s,X_{s}^n)ds\right].
    \]
    Insert the equation of $v$, we then obtain the result.
\end{proof}

Next, we need a lemma on a variant equation of PAM, namely 
\begin{equation}\label{equ.variant_PAM}
            \left\{
        \begin{array}{ll}
            \partial_t w^n_t = (\fH^n - \phi^n_t)w^n_t, &\text{ in }(0,\infty]\times\ZZ_n^2,\\
            w^n_0 =\varphi^n\geq 0, &\text{ on }\ZZ_n^2,\\
            w^n_t \geq 0, & \text{ }\forall t \geq 0,
        \end{array}
    \right.
\end{equation}
where $\phi^n_t$ is a positive function.
\begin{lem}\label{lem.variant_PAM}
    Suppose non-negative functions $\phi^n_t \in C_TL^\infty(\ZZ^2_n,e(l_0+\cdot))$ and $\varphi^n \in C^{1+2\kappa}(\ZZ^2_n,e(l))$ for some $l_0,l \in \RR$. Then the equation \eqref{equ.variant_PAM} admits a minimum non-negative solution given by mild formulation 
    \[
        w_t^n = T_t^n(\varphi^n) + \int_0^t T^n_{t-s}(\phi^n_{s}w_s^n)ds,
    \]
    with estimation 
    \[
        \|w_t^n\|_{\fL^{1-\kappa}(\ZZ_n^2,e(l+l_0+2t))}\lesssim \|\varphi^n\|_{C^{1+2\kappa}(\ZZ_n^2,e(l))}(1+\|\phi_t^n\|_{L^\infty(\ZZ_n^2,e(l_0+t))}),
    \]
    and 
    \[
        \|w_t^n\|_{L^\infty(\ZZ_n^2,e(l+t))} \lesssim \|\varphi^n\|_{C^{1+2\kappa}(\ZZ_n^2,e(l))},
    \]
    where $'\lesssim'$ is independent of $n$. Furthermore, the uniqueness of solution holds in the space $\cup_{\hat{l}\in\RR} C_TL^\infty(\ZZ_n^2,e(\hat{l}))$.
\end{lem}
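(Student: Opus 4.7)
The plan is to build the solution probabilistically through Feynman--Kac and then upgrade its regularity using Theorem \ref{thm.PAM}. Since $\fH^n - \phi_t^n = \Delta^n + (\xi^n - \phi_t^n)$ is just $\Delta^n$ with an extra time-dependent potential, the natural candidate is
\[
    w_t^n(x) := \EE_x\!\left[\varphi^n(X^n_t)\exp\!\left(\int_0^t \xi^n(X_s^n)\,ds - \int_0^t \phi^n_{t-s}(X_s^n)\,ds\right)\right],
\]
which is manifestly non-negative. Because $\phi_t^n \geq 0$, the killing factor $\exp(-\int \phi)\leq 1$ yields the pointwise domination $w_t^n(x) \leq T_t^n\varphi^n(x)$, and combining this with the weighted $L^\infty$ bound on $T^n_t\varphi^n$ obtained from Theorem \ref{thm.PAM} (with vanishing forcing) delivers the second displayed estimate of the lemma. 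Equivalently, one may produce $w^n$ by a short-time Picard iteration on the mild formulation (Duhamel in $-\phi w$), whose iterates stay non-negative because $T_t^n$ preserves positivity via its own Feynman--Kac representation; minimality is then inherited from this monotone procedure.

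Verifying that this $w^n$ actually solves \eqref{equ.variant_PAM} reduces to a direct discrete Dynkin computation in the spirit of the proof of Lemma \ref{lem.Dynkin_formula_PAM}, using that $n$ is fixed so the weighted sup-norms of $\xi^n,\phi^n$, and $\partial_t w^n$ are all finite. To upgrade to the claimed $\fL^{1-\kappa}$ bound, I would read \eqref{equ.variant_PAM} as an inhomogeneous linear PAM
\[
    \partial_t w_t^n = \fH^n w_t^n + f_t^n,\qquad f_t^n := -\phi_t^n w_t^n,
\]
and apply Theorem \ref{thm.PAM}. The previously established pointwise bound on $w^n$ gives
\[
    \|f_t^n\|_{L^\infty(\ZZ_n^2,e(l+l_0+2t))} \lesssim \|\phi^n\|_{C_TL^\infty(\ZZ_n^2,e(l_0+\cdot))}\,\|\varphi^n\|_{C^{1+2\kappa}(\ZZ_n^2,e(l))},
\]
and via the embedding $L^\infty \hookrightarrow C^{-1+2\kappa}$ Theorem \ref{thm.PAM} produces exactly the stated estimate, with the ``$1+$'' in the prefactor absorbing the contribution of the paracontrolled initial datum.

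For uniqueness in $\bigcup_{\hat l\in\RR}C_TL^\infty(\ZZ_n^2,e(\hat l))$, given two solutions $w^1,w^2$ I would apply Lemma \ref{lem.Dynkin_formula_PAM} to $u:=w^1-w^2$ with $R(v,\xi^n,\phi_t^n)=v\phi_t^n$ and zero initial data, then close the resulting weighted-sup-norm estimate by Gronwall using the exponential integrability $\EE[\exp(\lambda|M_t^n|^\sigma)]<\infty$ that was already exploited there. The main technical obstacle throughout is the bookkeeping of weight exponents: each multiplication by $\phi^n$ enlarges the ambient weight by an extra $e(l_0+\cdot)$, so one must verify that the heat-kernel and Feynman--Kac integrals remain convergent in these exponentially growing weights --- the hypothesis $\sigma<1$ in $e(l)=e^{l|x|^\sigma}$ is precisely what guarantees this convergence, which is the same mechanism that underlies the discrete solution theory in Theorem \ref{thm.PAM}.
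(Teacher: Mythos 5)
Your construction of $w^n$ by Feynman--Kac, the pointwise domination $0\le w^n_t\le T^n_t\varphi^n$ giving the $L^\infty$ bound, and the upgrade to $\fL^{1-\kappa}$ by reading the equation as an inhomogeneous PAM and feeding $f^n=-\phi^n w^n\in C_TL^\infty$ into Theorem~\ref{thm.PAM} all coincide with the paper's argument (a small notational slip: the exponent should use the renormalized $\xi^n_e=\xi^n-c_n$, not $\xi^n$).

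The uniqueness step, however, has a genuine gap. You propose to apply Lemma~\ref{lem.Dynkin_formula_PAM} to $u=w^1-w^2$ and close by Gr\"onwall, but the potential $\xi^n_e-\phi^n_{t-s}$ in the Dynkin representation is \emph{unbounded} in space: $\xi^n_e$ grows polynomially and $\phi^n$ grows like $e^{(l_0+\cdot)|x|^\sigma}$ with $l_0\in\RR$ arbitrary, possibly positive and large. If you test against the weight $e^{-(\hat l+t)|x|^\sigma}$ and use $|X_s^n|^\sigma\le|x|^\sigma+|X_s^n-x|^\sigma$, the Dynkin integrand contributes a factor $e^{(t-2s+l_0+\delta)|x|^\sigma}$ at time $s$, which is unbounded in $x$ near $s=0$ whenever $l_0>0$. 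Each application of the estimate therefore loses weight, and the iteration does not return to the norm you started from --- there is no closed Gr\"onwall inequality in a fixed weighted space. The paper circumvents exactly this obstacle: starting from the Dynkin representation that any $C_TL^\infty(e(\hat l))$ solution must satisfy, it performs a \emph{reverse integration by parts} (the mirror of the forward computation used to derive the mild form from Feynman--Kac) to show that the solution is forced to equal the explicit Feynman--Kac expectation, a deterministic functional of $\varphi^n$, and uniqueness is immediate with no Gr\"onwall needed. You should replace your Gr\"onwall step by this algebraic identification. A secondary, smaller point: you invoke a ``monotone'' Picard iteration to recover minimality, but the iterates of the mild map $w\mapsto T^n_t\varphi^n-\int_0^t T^n_{t-s}(\phi^n_s w_s)\,ds$ alternate rather than increase, so monotonicity is not automatic; the paper instead appeals to the G\"artner--Molchanov-type argument for minimality of the Feynman--Kac solution.
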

\begin{proof}
    By similar argument in \cite{gartner1990parabolic}, we know the minimum solution of equation \eqref{equ.variant_PAM} is given by the following Feynman-Kac formula 
    \begin{equation}
        w_t^n(x) = \EE\left[e^{\int_0^t(\xi^n_e - \phi_{t-s}^n)(X^n_s)ds}\varphi^n(X^n_t)|X^n_0 = x\right],
    \end{equation}
    if it is finite. By the positivity of $\phi^n$ and $\varphi^n$, we see the integrand has the estimation 
    \[
        0 \leq e^{\int_0^t(\xi^n_e - \phi_{t-s}^n)(X^n_s)ds}\varphi^n(X^n_t) \leq e^{\int_0^t\xi^n_e(X^n_s)ds}\varphi^n(X^n_t).
    \]
    Thus $w_t^n$ is finite due to the solvability of discrete PAM. Furthermore, we then have the estimation
    \[
        \|w_t^n\|_{L^\infty(\ZZ_n^2,e(l+t))} \lesssim  \|T_t^n\varphi^n\|_{L^\infty(\ZZ^2_n,e(l+t))} \lesssim \|\varphi^n\|_{C^{1+2\kappa}(\ZZ^2_n,e(l))}.
    \]
    By the regularity estimation of PAM, we have 
    \begin{equation*}
        \begin{aligned}
             \|w^n\|_{\fL^{1-\kappa}(\ZZ_n^2,e(l+l_0+2\cdot))}&\lesssim \|\varphi^n\|_{C^{1+2\kappa}(\ZZ_n^2,e(l+l_0))} + \|\phi^nw^n\|_{C_TL^\infty(\ZZ_n^2,e(l+l_0+2\cdot))} \\
             &\lesssim \|\varphi^n\|_{C^{1+2\kappa}(\ZZ_n^2,e(l))}(1+\|\phi_t^n\|_{L^\infty(\ZZ_n^2,e(l_0+t))}).
        \end{aligned}
    \end{equation*}
    Now from the Feynman Kac formulation to mild formulation, the integration by parts tells us that 
    \begin{equation*}
        \begin{aligned}
            w_t^n(x) &= \EE_{x}\left[\left( 1 - \int_0^t d_se^{\int_s^t(\xi_e^n-\phi_{t-r}^n)(X_r^n)dr}\right)\varphi^n(X_t^n)\right]\\
            &=\EE_{x}\left[\varphi^n(X^n_t) + \int_0^te^{\int_s^t(\xi_e^n-\phi^n_{t-u})(X^n_u)du}(\xi_e^n-\phi^n_{t-s})(X^n_s)ds \varphi^n(X^n_t)\right]\\
            &= \EE_{x}\left[\varphi^n(X^n_t) +\int_0^te^{\int_{t-s}^t(\xi_e^n-\phi^n_{t-u})(X^n_u)du}\varphi^n(X^n_t)(\xi_e^n-\phi^n_s)(X^n_{t-s})ds \right]\\
            &= \EE_{x}\left[\varphi^n(X^n_t) +\int_0^t w_s^n(X_{t-s}^n)(\xi_e^n-\phi^n_s)(X^n_{t-s})ds \right]\\
            &= P^n_t(\varphi^n) + \int_0^t P^n_{t-s}(w^n_{s}(\xi^n_e - \phi^n_{s}))ds,
        \end{aligned}
    \end{equation*}
    where we applied the Markov property of $X^n$ at the fourth equality. Then basic argument goes to the formulation with operator $T^n$. 
    
    On the uniqueness, if the solution $w^n \in C_TL^\infty(\ZZ_n^2,e(\hat{l}))$ for some $\hat{l}\in\RR$, we know by lemma \ref{lem.Dynkin_formula_PAM} that the solution must be given by 
    \begin{equation*}
        w^n_t(x) = \EE\left[\varphi(X^n_t) + \int_0^t(w_{t-s}\cdot\xi_e^n)(X^n_s) - w_{t-s}(X^n_r)(\phi^n_{t-s})(X_s^n)ds|X_0 = x\right],
    \end{equation*}
    which is exactly the mild formulation. Now we do the reverse procedure. The integration by parts formula gives us
    \begin{equation*}
        \begin{aligned}
            &\int_0^t e^{\int_{t-s}^t(\xi_e^n - \phi^n_u)(X_{t-u}^n)du}(\xi_e^n - \phi^n_{t-s})(X_s^n)\int_s^t w^n_{t-u}(\xi_e^n - \phi_{t-u}^n)(X_u^n)duds \\
            =& \int_0^t \int_s^t w_{t-u}^n(\xi_e^n - \phi_{t-u}^n)(X_u^n)du d\left(e^{\int_{t-s}^t(\xi_e^n - \phi^n_u)(X_{t-u}^n)du}\right)\\
            =& \int_0^t e^{\int_{t-s}^t(\xi_e^n - \phi^n_u)(X_{t-u}^n)du}w^n_{t-s}(\xi_e^n - \phi_{t-s}^n)(X_s^n)ds \\
            &- \int_0^tw_{t-u}^n(\xi_e^n-\phi^n_{t-u})(X_u^n)du.
        \end{aligned}
    \end{equation*}
    Thus we have by Markov property
    \begin{equation*}
        \begin{aligned}
            w_t^n(x) =& \EE_x\left[-\int_0^t e^{\int_{t-s}^t(\xi_e^n - \phi_u^n)(X_{t-u}^n)du}(\xi_e^n - \phi_{t-s}^n)(X_s^n)\left\{w_{t-s}^n(X_s^n) - \varphi^n(X_t^n) \right.\right. \\
            &- \left.\int_s^t w_{t-u}^n(\xi_e^n - \phi_{t-u}^n)(X_u^n)du\}ds + w_t^n(x)\right]\\
            =& \EE_x\left[w_t^n(x) - \int_0^tw_{t-s}^n(\xi_e^n-\phi_{t-s}^n)(X_s^n)ds \right.\\
            &+\left. \int_0^t\varphi^n(X_t^n)d\left(e^{\int_{t-s}^t(\xi_e^n - \phi_u^n)(X_{t-u}^n)du}\right)\right]\\
            =& \EE_x\left[\varphi^n(X_t^n)e^{\int_0^t(\xi_e^n - \phi^n_{t-s})(X_s)ds}\right].
        \end{aligned}
    \end{equation*}
    Thus solution $w_t^n$ is unique.
\end{proof}
\begin{remark}\label{rem.variant_PAM}
    Above lemma also holds when $\varphi^n \in \fD$ by theorem \ref{thm.PAM} and the estimation of $w_t^n$ will change to 
    \[
        \|w_t^n\|_{\fL^{1-\kappa}(\ZZ_n^2,e(l+l_0+2t))}\lesssim \|\varphi^n\|_{\fD(\ZZ_n^2,e(l))}(1+\|\phi_t^n\|_{L^\infty(\ZZ_n^2,e(l_0+t))}).
    \]
    Furthermore, we can see that the only solution to the equation \eqref{equ.variant_PAM} with initial $0$ should be $0$ by the proof of the above theorem even \textbf{without the positivity conditions (for $\phi^n, w^n$)}. 
\end{remark}
Now we are able to give a proof of theorem \ref{thm.nonlinear_PAM}
\begin{proof}[Proof of theorem \ref{thm.nonlinear_PAM}]
    We define the map $\fK: \phi^n\rightarrow \fK(\phi^n)$ to be the minimum solution to the equation \eqref{equ.variant_PAM}. Now consider a function $w \in C_T(\ZZ_n^2,e((l+\cdot)))$, we have 
    \[
    \|\fK(f(w,\xi))_t\|_{L^\infty(\ZZ_n^2,e(l+t))} \lesssim \|\varphi^n\|_{C^{1+2\kappa}(\ZZ_n^2,e(l))},
    \]
    This estimation shows that map $\fK(f(\cdot,\xi_e^n))$ maps the whole space $C_T(\ZZ_n^2,e((l+\cdot)))$ to a bounded subspace of itself. We now equip the space with the weak$*$ topology. From Schauder's fixed point theorem, we know there exists a fixed point for this map and the fixed point is exactly the mild solution to the equation \eqref{equ.nonlinear_PAM}. To show $w_t^n$ is indeed a solution of equation \eqref{equ.nonlinear_PAM}. We could write $w_t^n$ in the Feynman Kac formulation 
    \[
        w_t^n(x) = \EE_x\left[e^{\int_0^t(\xi_e^n - f(w_{t-s}^n,\xi^n_e))(X_s)ds}\varphi^n(X_t)\right],
    \]
    then by the lemma \ref{lem.variant_PAM}, we know $w_t^n$ is the minimum solution to the equation 
\begin{equation}
            \left\{
        \begin{array}{ll}
            \partial_t u^n_t = (\fH^n - f(w_t^n,\xi^n_e))u^n_t,\\
            u^n_0 =\varphi^n,\\
            u^n_t \geq 0.
        \end{array}
    \right.
\end{equation}
Thus is it a solution to the equation \eqref{equ.nonlinear_PAM}. Now we turn to the uniqueness property. Suppose we have two solutions to the equation \eqref{equ.nonlinear_PAM}, say $\tilde{w}^n$ and $w^n$. Then the differences 
\[
    \bar{w}^n := \tilde{w}^n - w^n,
\]
satisfies the equation
\begin{equation}
            \left\{
        \begin{array}{ll}
            \partial_t \bar{w}^n_t = \fH^n\bar{w}^n_t - \frac{f(\tilde{w}_t^n,\xi^n_e)w^n_t - f(w_t^n,\xi^n_e)w^n_t}{\bar{w}_t^n}\bar{w}^n_t,\\
            \bar{w}^n_0 =0.
        \end{array}
    \right.
\end{equation}
Above estimation shows that supremum norm of $\tilde{w}^n$ is bounded, and combining with assumption \ref{ass.nonlinearity}, we know 
\[
     \frac{f(\tilde{w}_t^n,\xi^n_e)w^n_t - f(w_t^n,\xi^n_e)w^n_t}{\bar{w}_t^n} \in C_TL^\infty(Z_n^2,e(\tilde{l}).
\]
Then from remark \ref{rem.variant_PAM}, we know $\bar{w}^n = 0$ and hence the uniqueness holds.
\end{proof}

\subsection{Continuous case}
For the continuous case, all the existence problems of mild solutions can be solved by sending $n$ to infinity in the discrete since all the estimations in the discrete cases are uniform over $n$. We are then interested in the uniqueness of solutions. 

As we can see in the discrete case, the keys to the uniqueness are the equivalence between Feynman-Kac formula and the mild formulation. It is not clear what $e^{\int_0^t\xi(X_s)ds}$ means in Feynman-Kac formula for the continuous case. However, we can hide this quantity by considering the directed polymer measure directly.  

The construction and convergence of polymer measure for spatial white noise by mollified or discrete white noise on $\TT^2$ are given in \cite{cannizzaro2018multidimensional,chouk2017invariance}. We can see the argument in \cite{chouk2017invariance} is closely related to the convergence of solutions to the corresponding Parabolic Anderson Models and the application of \cite{martin2019paracontrolled} on the whole $\RR^2$ will then results in the construction and convergence of polymer measure on $\RR^2$.  

Intuitively, we only need the Markov property of $X^n$ when we prove the uniqueness of equation \eqref{equ.variant_PAM}. And in \cite{alberts2014continuum}, the authors show the polymer measure conditioned on the environment is actually a Markov process, which implies that the uniqueness in the continuous case should also hold.  We start with the definition of polymer measure $Q$ on the space $C[0,T]$. 
\begin{definition}
    Let $X_t$ be the canonical process of $C[0,T]$, define the finite dimensional distribution for $0 = t_0 < t_1 < \cdots < t_k \leq T$ and $x_0 = x,x_i\in\RR^2,i=1,\cdots,k$:
    \[
        Q_x\left[X_{t_i} \in dx_i\right] = \frac{1}{\fZ_T(x)}\prod_{i=0}^{k-1}\fZ_{t_{i+1}-t_i}(x_i,x_{i+1})\fZ_{T-t_k}(x_k)dx_1dx_2\cdot dx_k,
    \]
    where $\fZ_t(x,y)$ is the solution to the equation \begin{equation}\label{equ.PAM_dirac}
            \left\{
        \begin{array}{ll}
            \partial_t \fZ_t(x,y) = \fH_x\fZ_t(x,y), & \text{ in }(0,\infty]\times\RR^2,\\
            \fZ_0(x) =\delta_y(x), &\text{ on }\RR^2.
        \end{array}
    \right.
\end{equation}
and $\fZ_t(x) := \int_{\RR^2}\fZ_t(x,y)dy$. The measure $Q_x$ generated by the above finite-dimensional distribution is called the directed polymer measure starting at point $x$.
\end{definition}
\begin{remark}
    The two-dimensional dirac function have the regularity of $B_{p,\infty}^{-2(1-1/p)}$ and applying conclusion in \cite{martin2019paracontrolled} with $p=1$, we see that equation \eqref{equ.PAM_dirac} admits a unique solution in the paracontrolled space. The solution $\fZ_t(x,y)$ is actually a Green's function and we have the representation 
    \[
        T_t\varphi(x) = \int_{\RR^2}\fZ_t(x,y)\varphi(y)dy.
    \]
    Furthermore, the consistency of finite-dimensional distribution is guaranteed by 
    \[
        \fZ_{t+s}(x,y) = (T_{t+s}\delta_y)(x) = (T_t\fZ_s(\cdot,y))(x) = \int_{\RR^2}\fZ_t(x,z)\fZ_s(z,y)dz.
    \]
    In addition, the markov property of polymer measure can be seen from its finite-dimensional distribution directly.
\end{remark}
Apart from the Markov property, we need a result on the transition function of the polymer measure, which can actually be viewed as a version of definition of polymer measure.
\begin{lem}
    We have the following representation for $\varphi \in C^{1+2\kappa}(\RR^2,e(l))$:
    \[
        T_{t-s}\varphi(X_s) = \EE_x^{Q}\left[\fZ_{T-s}(X_s)\frac{\varphi(X_t)}{\fZ_{T-t}(X_t)}|X_s\right],
    \]
    for $X_s$ following the probability measure $Q_x$.
\end{lem}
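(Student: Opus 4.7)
The plan is to read off the identity directly from the explicit finite-dimensional distribution of the polymer measure combined with the Green-function representation $T_{t-s}\varphi(y) = \int_{\RR^2}\fZ_{t-s}(y,z)\varphi(z)\,dz$ recorded in the preceding remark. The argument is essentially algebraic cancellation, so I would organise it around three steps.

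First, I would use the Markov property of $Q_x$, together with its finite-dimensional laws applied to the nodes $\{0,s,t\}$, to write
\[
Q_x[X_s\in dy,\,X_t\in dz] = \frac{1}{\fZ_T(x)}\,\fZ_s(x,y)\,\fZ_{t-s}(y,z)\,\fZ_{T-t}(z)\,dy\,dz.
\]
The $X_s$-marginal equals $\fZ_T(x)^{-1}\fZ_s(x,y)\fZ_{T-s}(y)\,dy$ after invoking the semigroup identity $\int \fZ_{t-s}(y,z)\fZ_{T-t}(z)\,dz = \fZ_{T-s}(y)$, which follows from $T_{T-s}\1 = T_{t-s}T_{T-t}\1$ evaluated via the Green-function representation of $T$. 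Dividing yields the conditional law
\[
Q_x[X_t\in dz\mid X_s = y] = \frac{\fZ_{t-s}(y,z)\,\fZ_{T-t}(z)}{\fZ_{T-s}(y)}\,dz.
\]

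Second, I would substitute this conditional law into the right-hand side of the claimed identity. The factor $\fZ_{T-s}(X_s)$ that sits outside the conditional expectation cancels the denominator of the conditional density, and the $\fZ_{T-t}(X_t)$ in the denominator of the integrand cancels the corresponding factor $\fZ_{T-t}(z)$ in the density. What is left is $\int_{\RR^2}\fZ_{t-s}(y,z)\varphi(z)\,dz$, which by the Green-function representation is exactly $T_{t-s}\varphi(y)$.

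The only genuine obstacle is to make sense of $\varphi(X_t)/\fZ_{T-t}(X_t)$ under $Q_x$ so that the cancellation is legitimate. Strict positivity of $\fZ_{T-t}$ on $\RR^2$ follows from the Feynman--Kac (polymer-measure) representation together with the minimum principle embodied in Lemma \ref{lem.variant_PAM}, so the quotient is finite pointwise. Integrability is then free, since after cancellation the integrand collapses to $\fZ_{t-s}(y,\cdot)\varphi(\cdot)$, whose integral is controlled by the weighted estimates of Theorem \ref{thm.PAM} applied to $T_{t-s}\varphi$ with $\varphi\in C^{1+2\kappa}(\RR^2,e(l))$. No further probabilistic input beyond the Markov property of $Q_x$ is required.
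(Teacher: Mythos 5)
Your proposal is correct and takes essentially the same route as the paper: both read off the joint two-point density of $(X_s,X_t)$ under $Q_x$ from the finite-dimensional distribution, cancel $\fZ_{T-t}$, and integrate in the second variable via the Green-function representation $T_{t-s}\varphi(y)=\int\fZ_{t-s}(y,z)\varphi(z)\,dz$. The only cosmetic difference is that the paper verifies the conditional-expectation identity by integrating against an arbitrary test function $g(X_s)$ rather than first writing out the conditional density of $X_t$ given $X_s$ as you do, but the underlying algebraic cancellation is identical; your extra remarks on strict positivity of $\fZ$ and integrability are sound and simply make explicit what the paper leaves implicit.
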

\begin{proof}
    For any measurable function $g$, we calculate 
    \begin{equation*}
        \begin{aligned}
            &\EE_x^Q\left[\fZ_{T-s}(X_s)\frac{\varphi(X_t)}{\fZ_{T-t}(X_t)}g(X_s)\right] \\
            =& \frac{1}{\fZ_T(x)}\int_{\RR^2}\fZ_s(x,x_1)\fZ_{t-s}(x_1,x_2)\fZ_{T-t}(x_2)\fZ_{T-s}(x_1)g(x_1)\frac{\varphi(x_2)}{\fZ_{T-t}(x_2)}dx_1dx_2\\
            =& \frac{1}{\fZ_T(x)}\int_{\RR^2}\fZ_s(x,x_1)\fZ_{t-s}(x_1,x_2)\fZ_{T-s}(x_1)g(x_1)\varphi(x_2)dx_1dx_2\\
            =&\frac{1}{\fZ_T(x)}\int_{\RR}T_{t-s}\varphi(x_1)g(x_1)\fZ_s(x,x_1)\fZ_{T-s}(x_1)dx_1 = \EE^Q_x[T_{t-s}\varphi(X_s)g(X_s)].
        \end{aligned}
    \end{equation*}
    Thus the result follows.
\end{proof}

Now consider equation
\begin{equation}\label{equ.variant_PAM_continuum}
            \left\{
        \begin{array}{ll}
            \partial_t w_t = (\fH - \phi_t)w_t, & \text{ in }(0,\infty]\times\RR^2,\\
            w_0 =\varphi\geq 0, &\text{ on }\RR^2\\
            w_t \geq 0, &\text{ }\forall t \geq 0.
        \end{array}
    \right.
\end{equation}
\begin{lem}
    Suppose $\phi_t \in C_TL^\infty(\RR^2,e(l_0+\cdot))$ and $\varphi \in C^{1+2\kappa}(\RR^2,e(l))$ for some $l_0,l \in \RR$. Then the equation \eqref{equ.variant_PAM} admits a solution given by mild formulation 
    \[
        w_t = T_t(\varphi) + \int_0^t T_{t-s}(\phi_{s}w_s)ds,
    \]
    with estimation 
    \[
        \|w_t\|_{\fL^{1-\kappa}(\RR^2,e(l+l_0+2t))}\lesssim \|\varphi\|_{C^{1+2\kappa}(\RR^2,e(l))}(1+\|\phi_t\|_{L^\infty(\RR^2,e(l_0+t))}),
    \]
    and 
    \[
        \|w_t\|_{L^\infty(\RR^2,e(l+t))} \lesssim \|\varphi\|_{C^{1+2\kappa}(\RR^2,e(l))}.
    \]
    Same result for $\varphi\in\fD^{1-\epsilon}(\RR^2,e(l))$. Furthermore, the uniqueness of solution holds in the space $\cup_{\hat{l}\in\RR,\sigma\in(0,1)} C_TC^0(\RR^2,e(\hat{l}))$.
\end{lem}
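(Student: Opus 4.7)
The plan is to mirror the proof of Lemma \ref{lem.variant_PAM} in the continuum. Existence together with the two quantitative estimates is obtained by passing to the limit in the discrete solutions, relying on the uniform bounds already established and on the convergence statement of Theorem \ref{thm.PAM}. The crucial point -- and the only real departure from the discrete argument -- is uniqueness: the pointwise Feynman--Kac representation used in the lattice proof has no direct analogue on $\RR^2$ because $e^{\int_0^t\xi(X_s)ds}$ is meaningless when $\xi\in C^{-1-\kappa'}$, and this is exactly the step where the polymer measure $Q_x$ and the transition identity just proved enter.

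For existence, choose approximations $\varphi^n\to\varphi$ in $C^{1+2\kappa}(\ZZ_n^2,e(l))$ (or in the paracontrolled space $\fD^{1-\epsilon}(\ZZ_n^2,e(l))$ for the second case) and $\phi^n\to\phi$ in $C_TL^\infty(\ZZ_n^2,e(l_0+\cdot))$, and let $w^n$ be the minimum solution produced by Lemma \ref{lem.variant_PAM}. The bounds there are uniform in $n$, and Theorem \ref{thm.PAM} ensures $\fE^n T_t^n\to T_t$ at the level of the linear part; a compactness-diagonalization argument in weighted $C_T C^{1-\kappa-\eta}_{loc}$, combined with lower semicontinuity of the Besov norms, produces a limit $w$ satisfying the stated continuum mild formulation together with both claimed bounds. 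The extension to paracontrolled initial data is identical, now invoking the paracontrolled version of Theorem \ref{thm.PAM}.

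For uniqueness, let $w^1,w^2\in C_T C^0(\RR^2,e(\hat l))$ be two solutions with the same data, set $\bar w:=w^1-w^2$, so that $\bar w_0=0$ and (up to the sign of the linear convolution)
\[
\bar w_t=-\int_0^t T_{t-s}(\phi_s\bar w_s)\,ds.
\]
Fix a time horizon $T\geq t$ and substitute the transition identity
\[
T_{t-s}g(X_s)=\EE^Q_x\!\left[\fZ_{T-s}(X_s)\frac{g(X_t)}{\fZ_{T-t}(X_t)}\,\Big|\,X_s\right],
\]
first for smooth test functions and then extended to forcings $g\in L^\infty$ by mollification, using the pointwise bounds on $\fZ$ that underlie the construction of $Q_x$. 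Iterating this identity and exploiting the Markov property of $Q_x$ yields a Duhamel expansion for $\bar w_t(x)$ whose $k$-th term is bounded by $(Ct)^k/k!$ times $\sup_{s\leq t}\|\bar w_s\|_{L^\infty(e(\hat l+s))}$, with $C$ depending only on $\|\phi\|_{C_TL^\infty(e(l_0+\cdot))}$; letting $k\to\infty$ forces $\bar w\equiv 0$. A cleaner alternative is to take $L^\infty$-norms on both sides of the Duhamel identity for $\bar w$ and close directly by Gronwall, using the weighted $L^\infty\to L^\infty$ Schauder bound for $T_{t-s}$ implied by Theorem \ref{thm.PAM}.

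The hard part is precisely the uniqueness step. In the discrete setting $\xi^n$ is bounded, so the explicit Feynman--Kac formula and the integration-by-parts of Lemma \ref{lem.variant_PAM} make sense pointwise; on $\RR^2$ neither does, and one must replace the ill-defined exponential by the polymer measure. The technical work is in verifying that the transition identity of the previous lemma, stated for $C^{1+2\kappa}$ test functions, still holds for the rougher forcings $\phi_s\bar w_s\in L^\infty$ that appear in the Duhamel expansion -- which is routine once one has the pointwise bounds on $\fZ$ and on the densities $\fZ_{T-s}/\fZ_{T-t}$ from the construction of $Q_x$.
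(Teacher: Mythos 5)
Your existence argument and your identification of the polymer measure as the key ingredient both match the paper. The uniqueness step, however, contains a genuine gap, and it is precisely the step where the paper is doing something more delicate than Duhamel iteration or Gronwall.

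The problem is the spatial weight of $\phi$. The hypothesis allows $\phi_s\in L^\infty(\RR^2,e(l_0+s))$ with $l_0$ of any sign, so $|\phi_s(x)|$ can grow like $e^{(l_0+s)|x|^\sigma}$. Your ``cleaner alternative'' takes $L^\infty$-norms in the Duhamel identity $\bar w_t=-\int_0^t T_{t-s}(\phi_s\bar w_s)\,ds$. But the Schauder estimate of Theorem~\ref{thm.PAM} only gives $T_{t-s}:L^\infty(e(l'))\to L^\infty(e(l'+(t-s)))$, i.e.\ a weight loss of exactly $(t-s)$, while the pointwise product $\phi_s\bar w_s$ lands in $L^\infty(e(\hat l+l_0+2s))$ rather than in $L^\infty(e(\hat l+s))$. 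When $l_0+s>0$ the mismatch $e^{(l_0+s)|x|^\sigma}$ is unbounded, so the term $\|T_{t-s}(\phi_s\bar w_s)\|_{L^\infty(e(\hat l+t))}$ cannot be controlled by $\|\bar w_s\|_{L^\infty(e(\hat l+s))}$ at all, and the Gronwall inequality never closes. Each application of $g\mapsto T_\cdot(\phi\,g)$ strictly increases the weight, so there is no fixed weighted space in which the linear map is bounded, let alone a contraction. The same weight-compounding issue defeats your iteration bound: after $k$ substitutions via the transition identity the integrand contains $\prod_{i=1}^k\phi_{t-s_i}(X_{s_i})$, whose $Q_x$-expectation involves $\EE^Q[e^{k(l_0+t)\sup_s|X_s|^\sigma}]$ and therefore grows much faster than $C^k$; the claimed $(Ct)^k/k!$ bound with $C$ depending only on $\|\phi\|_{C_TL^\infty(e(l_0+\cdot))}$ cannot hold.

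The paper's uniqueness argument is not an iteration at all. After expressing the mild formulation through the polymer measure, it performs a \emph{single} substitution of $w_{t-s}(X_s)$ back into the representation and then an integration by parts against $d\bigl(e^{-\int_{t-s}^t\phi_u(X_{t-u})du}\bigr)$, exactly as in the discrete Lemma~\ref{lem.variant_PAM}. This telescopes in one step to the closed pathwise formula
\[
w_t(x)=\EE_x^Q\!\left[\fZ_T(x)\,e^{-\int_0^t\phi_{t-u}(X_u)\,du}\,\frac{\varphi(X_t)}{\fZ_{T-t}(X_t)}\right],
\]
where the exponential factor is produced by the integration by parts rather than resummed from a series. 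Because the formula is an identity (valid pathwise by continuity of $w,X$ and strict positivity of $\fZ$), two mild solutions with the same data $(\varphi,\phi)$ must coincide; with $\varphi=0$ one gets $w\equiv 0$ directly. This is why no Gronwall or convergent Duhamel series is needed and why the result holds for arbitrary $l_0\in\RR$: the weight growth is absorbed once and for all by the boundedness of $e^{-\int\phi}\leq 1$ (using $\phi\geq 0$), not defeated term by term.
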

\begin{proof}
    The existence is a direct consequence of discrete case and taking the limit. On the uniqueness, the mild formulation is given by 
    \begin{equation*}
        \begin{aligned}
            w_t(x) =& T_t\varphi - \int_0^t T_{t-s}(\phi_sw_s)ds \\
            =& \EE^Q_x\left[\fZ_T(x)\frac{\varphi(X_t)}{\fZ_{T-t}(X_t)} - \fZ_T(x)\int_0^t \frac{\phi_{t-s}w_{t-s}(X_s)}{\fZ_{T-s}(X_s)}ds\right].
        \end{aligned}
    \end{equation*}
    Furthermore, let $0 < s < t$, we have 
    \begin{equation*}
        \begin{aligned}
        w_{t-s}(X_s) = \EE_x^Q\left[\fZ_{T-s}(X_s)\frac{\varphi(X_t)}{\fZ_{T-t}(X_t)} - \fZ_{T-s}(X_s)\int_s^t\frac{\phi_{t-u}w_{t-u}(X_u)}{\fZ_{T-u}(X_u)}du\large|X_s\right].
        \end{aligned}
    \end{equation*}
    Thus, same calculation as in the lemma \ref{lem.variant_PAM} gives 
    \begin{equation*}
        \begin{aligned}
            w_t(x) =& \EE_x^Q\left[w_t(x) + \fZ_T(x)\int_0^t e^{\int_{t-s}^t-\phi_u(X_{t-u})du}\frac{-\phi_{t-s}(X_s)}{\fZ_{T-s}(X_s)}\left\{w_{t-s}(X_s) - \right.\right.\\
            & \left.\left.\fZ_{T-s}(X_s)\frac{\varphi(X_t)}{\fZ_{T-t}(X_t)} + \fZ_{T-s}(X_s)\int_s^t\frac{\phi_{t-u}(X_u)w_{t-u}(X_u)}{\fZ_{T-u}(X_u)}du\right\}ds\right]\\
            =& \EE_x^Q\left[\fZ_T(x)\frac{\varphi(X_t)}{\fZ_{T-t}(X_t)}\left(1 - \int_0^te^{-\int_{t-s}^t\phi_{u}(X_{t-u})dy}\phi_{t-s}(X_s)ds\right)\right]\\
            =&\EE_x^Q\left[\fZ_T(x)e^{-\int_0^t\phi_{t-u}(X_u)du}\frac{\varphi(X_t)}{\fZ_{T-t}(X_t)}\right].
        \end{aligned}
    \end{equation*}
    The continuity of $w_t, X_t$ and the strict positivity of $\fZ$ ensure all the above calculations inside the expectation are valid. Thus we obtain the uniqueness of mild solution to the equation \eqref{equ.variant_PAM_continuum}. 
\end{proof}
It is then a direct consequence that the solution of equation 
\begin{equation}\label{equ.nonlinear_PAM_continuum}
            \left\{
        \begin{array}{ll}
            \partial_t w_t = \fH w_t - f(w_t)w_t,  &\text{ in }(0,\infty]\times\RR^2,\\
            w_0 =\varphi\geq 0, &\text{ on }\RR^2,\\
            w_t \geq 0,&\text{ }\forall t\geq 0,
        \end{array}
    \right.
\end{equation}
is unique, where $f$ is a non-negative function.
\begin{cor}
    Suppose $|f(x)|\leq |x|^\alpha$ for some $\alpha > 0$, $xf(x)$ is locally Lipschitz and $\varphi\in C^{1+2\kappa}(\RR^2,e(l))$, then there exists a unique mild solution to the equation \eqref{equ.nonlinear_PAM_continuum} and we have the estimation
    \[
        \|w_t\|_{\fL^{1-\kappa}(\RR^2,e(l'+(1+\alpha)t))} \lesssim \|\varphi\|_{C^{1+2\kappa}(\RR^2,e(l))}(1+\|\varphi\|_{C^{1+2\kappa}(\RR^2,e(l))}^\alpha),
    \]
    for any and $l' >(1+\alpha)l$.
\end{cor}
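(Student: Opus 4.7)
The plan is to bootstrap from Theorem \ref{thm.nonlinear_PAM} (discrete well-posedness) together with the preceding variant PAM uniqueness lemma. For \emph{existence and the stated estimate}, I would take discretizations $\varphi^n \in C^{1+2\kappa}(\ZZ_n^2, e(l))$ with $\fE^n \varphi^n \to \varphi$ in $C^{1+2\kappa}(\RR^2, e(l))$, apply Theorem \ref{thm.nonlinear_PAM} with nonlinearity $(x,y) \mapsto f(x)$ to produce discrete solutions $w^n$ satisfying the uniform bound
\begin{equation*}
\|w^n\|_{\fL^{1-\kappa}(\ZZ_n^2, e(l' + (1+\alpha)t))} \lesssim \|\varphi^n\|_{C^{1+2\kappa}(\ZZ_n^2, e(l))}\bigl(1 + \|\varphi^n\|_{C^{1+2\kappa}(\ZZ_n^2, e(l))}^\alpha\bigr),
\end{equation*}
uniformly in $n$. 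Extracting a limit $w$ via compactness of weighted Besov embeddings and passing to the limit in the mild formulation (using convergence of the discrete Anderson semigroup from Theorem \ref{thm.PAM} and continuity of $x \mapsto xf(x)$) yields both existence and the stated estimate on $\RR^2$.

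For \emph{uniqueness}, I would take two solutions $w^1, w^2$ in the class provided above and set $\bar w := w^1 - w^2$. It satisfies
\begin{equation*}
\partial_t \bar w_t = (\fH - \phi_t)\bar w_t, \qquad \bar w_0 = 0,
\end{equation*}
where
\begin{equation*}
\phi_t(x) := \frac{f(w^1_t(x)) w^1_t(x) - f(w^2_t(x)) w^2_t(x)}{w^1_t(x) - w^2_t(x)},
\end{equation*}
interpreted by continuity at the zero set of the denominator. Because $w^1, w^2 \in C_T L^\infty(\RR^2, e(l + \cdot))$ and $x \mapsto xf(x)$ is locally Lipschitz, the coefficient $\phi_t$ lies in $C_T L^\infty(\RR^2, e(l_0 + \cdot))$ for some $l_0$. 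Invoking the continuum analogue of Remark \ref{rem.variant_PAM} — that $w \equiv 0$ is the only solution of $\partial_t w = (\fH - \phi)w$ with zero initial data, without any sign requirement on $\phi$ — forces $\bar w = 0$.

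The main obstacle is transferring this signed version of the variant PAM uniqueness from the preceding lemma, whose statement is phrased for $\phi \geq 0$. Inspection of the polymer measure argument shows that uniqueness there reduces to the identity
\begin{equation*}
w_t(x) = \EE_x^Q\!\left[\fZ_T(x)\, e^{-\int_0^t \phi_{t-u}(X_u)\,du}\, \frac{\varphi(X_t)}{\fZ_{T-t}(X_t)}\right],
\end{equation*}
which relies only on $\phi \in C_T L^\infty(\RR^2, e(l_0 + \cdot))$ and the strict positivity of the Green's function $\fZ$; the sign of $\phi$ is never used in the integration-by-parts manipulation. This supplies exactly the continuum counterpart of Remark \ref{rem.variant_PAM} needed to close the uniqueness step.
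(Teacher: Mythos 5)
Your proof matches the paper's approach exactly: existence via discrete approximation using Theorem \ref{thm.nonlinear_PAM} and passage to the limit, and uniqueness by casting the difference $\bar w = w^1 - w^2$ as a solution of the variant PAM with a signed coefficient $\phi_t$ (bounded in weighted $L^\infty$ because $xf(x)$ is locally Lipschitz and the $w^i$ are bounded) and then invoking the continuum extension of Remark \ref{rem.variant_PAM}. Your explicit observation that the polymer-measure uniqueness argument is indifferent to the sign of $\phi_t$ is precisely the implicit content of the paper's one-line reference to ``the similar argument as in the proof of theorem \ref{thm.nonlinear_PAM}.''
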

\begin{proof}
    The existence comes from the approximation by discrete equation. Uniqueness comes from the similar argument as in the proof of theorem \ref{thm.nonlinear_PAM}.
\end{proof}

\begin{remark}
    All the results apply to the inhomogeneous equations:
    \begin{equation}\label{equ.variant_PAM_continuum_inhomo}
            \left\{
        \begin{array}{ll}
            \partial_t w_t = (\fH - \phi_t)w_t + g_t, &\text{ in }(0,\infty]\times\RR^2,\\
            w_0 =\varphi\geq 0, &\text{ on }\RR^2,\\
            w_t \geq 0&\text{ }\forall t\geq 0,
        \end{array}
    \right.
\end{equation}
and 
\begin{equation}\label{equ.nonlinear_PAM_continuum_inhomo}
            \left\{
        \begin{array}{ll}
            \partial_t w_t = \fH w_t - f(w_t)w_t + g_t, &\text{ in }(0,\infty]\times\RR^2,\\
            w_0 =\varphi\geq 0, &\text{ on }\RR^2,\\
            w_t \geq 0&\text{ }\forall t\geq 0,
        \end{array}
    \right.
\end{equation}
with the additional condition $g \in C_TL^\infty(\RR^2,e(l))$ and non-negative. Since in these case, the mild solution of \eqref{equ.variant_PAM_continuum_inhomo} can be expressed as 
\begin{equation}
    \begin{aligned}
        w_t(x) = \EE_x^Q&\left[\fZ_T(x)e^{-\int_0^t\phi_{t-u}(X_u)du}\frac{\varphi(X_t)}{\fZ_{T-t}(X_t)}\right. \\
        &\quad+ \left.\fZ_T(s)\int_0^te^{-\int_0^s\phi_{t-u}(X_u)du}\frac{g_{t-s}(X_s)}{\fZ_{T-s}(X_s)}ds\right].
    \end{aligned}
\end{equation}
Thus the comparison principle with respect to $\phi$ still holds. The result is needed for the compact support property.
\end{remark}

\begin{remark}
    All the above discussion is valid for the operator $\fH$ replaced by $\Delta$ with much simpler argument and better regularity. 
\end{remark}

\section{Existence as limit of particle systems}\label{sec.existence}
\subsection{For the case $\varkappa < \frac{2}{1+\beta}$}
In this section, we will show that the family $\{\mu^n\}$ defined in definition \ref{def.discrete_mu} is tight in $\DD([0,\infty),\fM(\RR^2))$ and its limit is the $\beta-$ rough super Brownian motion with desired Laplace functional.

We will need auxiliary branching random walks given by the same branching rate $a^n$ with branching mechanism $g(s)$, which we denote the empirical measure as $\{\tilde{\mu}^n\}$.

Notice this branching mechanism $g$ is independent with respect to the $n$ and position $x$ and is well studied in classical books\cite{dawson1992infinitely,dawson1993measure} when the branching rate is also independent of the position. 

\begin{theorem}\label{thm.existence_laplace}
    Suppose $\mu_0$ is supported on a compact set. The limit $\mu_t:= \lim_{n\rightarrow\infty}\mu_t^n$ exists as stochastic processes in the space $\DD([0,\infty),\fM(\RR^2))$ and, when $\varrho = \beta$, for all non-negative function $\varphi \in C^{1+2\kappa}(\RR^2,e(l)),l\in\RR$, the laplace transformation is given by 
    \begin{equation}
        \EE\left[e^{-\langle\mu_t,\varphi\rangle}\right] = e^{-\langle\mu_0,U_t(\varphi)\rangle},
    \end{equation}
    where $U_t(\varphi)$ is the unique non-negative solution to the equation \eqref{equ.nonlinear_PAM_RSBM} with $\varkappa = \frac{2\nu}{1+\beta}$. In particular, this shows $\mu_t$ is markov and the first definition of definition \ref{def.beta_RSBM} is satisfied. Thus the $\beta-$rough super Brownian motion exists and is unique in law. In addition, when $\varrho < \beta$, the limit $\mu_t$ satisfies the equation PAM.
\end{theorem}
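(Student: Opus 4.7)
The plan is to follow the classical Laplace-functional strategy for scaling limits of branching particle systems, driven by the nonlinear PAM theory developed in Section \ref{sec.variant}. First, I would derive a discrete Laplace identity. The Poisson initialisation $Z_0^n\sim\mathrm{Poisson}(\mu_0^n/\epsilon)$ combined with the branching property yields
\[
    \EE\bigl[e^{-\langle\mu_t^n,\varphi\rangle}\bigr] \;=\; \exp\bigl(-\langle\mu_0^n,u_t^n\rangle\bigr),\qquad u_t^n(x):=\frac{1-\EE_x[e^{-\epsilon\langle Z_t^n,\varphi\rangle}]}{\epsilon}.
\]
A direct computation with $\fL^n$ and the explicit form of $g^n$ then shows that $u_t^n$ solves a discrete non-linear PAM of the form
\[
    \partial_t u^n \;=\; \fH^n u^n - \varkappa_n(u^n)^{1+\beta},\qquad \varkappa_n = \frac{2\xi_+^n\,\epsilon^{\beta}}{1+\beta},\qquad u_0^n=\frac{1-e^{-\epsilon\varphi}}{\epsilon},
\]
once the constant $c_n$ arising from the decomposition $\xi^n=\xi_e^n+c_n$ is absorbed consistently with the paracontrolled framework.

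Second, I would pass to the limit in $u^n$. When $\varrho=\beta$ one has $\epsilon^\beta=n^{-1}$, and Assumption \ref{ass.environment} gives $\varkappa_n\to\varkappa=\frac{2\nu}{1+\beta}$, while the initial data $u_0^n$ converge to $\varphi$ in $C^{1+2\kappa}(\RR^2,e(l))$. Theorem \ref{thm.nonlinear_PAM} together with its continuous counterpart in the same section then yields $\fE^n u_t^n \to U_t(\varphi)$ in $\fL^{1-\kappa}(\RR^2,e(l'))$. When $\varrho<\beta$, $\varkappa_n\to 0$ and $u^n$ converges to the solution of the linear PAM driven by $\fH$.

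Third, tightness of $\{\mu^n\}$ in $\DD([0,\infty),\fM_F(\RR^2))$ is obtained via the Aldous criterion. Compact containment follows from first-moment bounds in weighted spaces $e(l)$, which satisfy the linear estimates of Section \ref{sec.variant}. The oscillation control on $\langle\mu^n_\cdot,\varphi\rangle$ reduces to $(1+\theta)$-moment bounds on $\sup_{s\leq t}\langle\mu_s^n,\varphi\rangle$ for some $0<\theta<\beta$. Once tightness is in hand, any subsequential limit $\mu$ inherits the Laplace identity
\[
    \EE\bigl[e^{-\langle\mu_t,\varphi\rangle}\bigr]\;=\;e^{-\langle\mu_0,U_t(\varphi)\rangle}.
\]
Combined with the semigroup property $U_{t+s}=U_t\circ U_s$, a consequence of uniqueness in Theorem \ref{thm.nonlinear_PAM}, this identity fixes all finite-dimensional distributions, gives the Markov property, and implies the first clause of Definition \ref{def.beta_RSBM}. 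Uniqueness in law then forces convergence of the full sequence.

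The principal obstacle is the $(1+\theta)$-moment estimate on $\sup_{s\leq t}\langle\mu_s^n,\varphi\rangle$. Because the offspring law has infinite variance, the $L^2$-martingale arguments of \cite{perkowski2021rough} are unavailable. The strategy signalled in the introduction is to couple $Z^n$ with the auxiliary branching process $\tilde Z^n$, which shares the rate $|\xi^n|$ but uses the $n$-independent mechanism $g(s)$ and for which classical truncation-based moment bounds are tractable; the estimates are then transferred to $Z^n$ via domination of the associated empirical measures. A secondary subtlety is the consistent treatment of the renormalisation constant $c_n$ when rewriting $\Delta^n+\xi^n$ in terms of $\fH^n$, which must remain compatible with the paracontrolled machinery of Section \ref{sec.variant}.
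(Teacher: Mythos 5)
Your proposal follows the same route as the paper: discrete Laplace identity via the Poisson-cluster formula, identification of $u^n$ as solving the discrete nonlinear PAM with the spatially varying coefficient $\tfrac{2\xi_+^n\epsilon^\beta}{1+\beta}$, passage to the limit via Section~\ref{sec.variant}, tightness via Jakubowski/Aldous, moment bounds via the coupling with the simpler mechanism $g$, and finally identification of the Laplace functional.

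One point to tighten: the paper's verification of the Aldous criterion is not a raw oscillation bound driven by the $\sup$-moment estimate. It exploits the Markov property of $\mu^n$ to write $\EE[e^{-\langle\mu^n_{\tau_n+\delta_n},\varphi\rangle}\mid\mu^n_{\tau_n}]=\exp(-\langle Z^n_{\tau_n},-\log(1-\epsilon U^n_{\delta_n}(\varphi))\rangle)$, and then compares this with $\EE[e^{-(s+t)Y^n_{\tau_n}}]$ using the PDE estimates of Section~\ref{sec.variant} together with $\EE[\sup_{s\leq t}\langle\mu_s^n,1\rangle]<\infty$; the $\sup$-moment bound is a subsidiary input rather than the full mechanism. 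Similarly, tightness of $\{\mu^n\}$ in $\DD([0,\infty),\fM_F(\RR^2))$ is obtained through Jakubowski's criterion, with Aldous applied to the scalar projections $Y^n_t=\langle\mu^n_t,\varphi\rangle$ and compact containment coming from the moment bound on $\langle\mu^n_t,1+|\cdot|^2\rangle$ --- you name both ingredients, so this is a matter of phrasing rather than a gap.
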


Let's first give a more detailed description of spatial dependent branching mechanism and branching rate here. Since all particles in the system move independently, it is sufficient to describe the motion of one particle. To describe the branching time of one particle, let's consider independent Poisson processes $N_t^x$ with intensity $a(x)$ attached at each site $x$ of $\ZZ_n^2$. Let $X_t^n$ be a simple random walk on $\ZZ_n^2$, then the branching time $\tau$ can be defined by 
\[
    \tau := \inf\{t \geq 0: X_t^n \text{ triggers the clock of }N_t^{X_t^n} \text{ at time }t\}.
\]
By the strong Markov property of Poisson process, we know that each time $X^n$ changes site, say time $t_0$ , there is a new Poisson process $(N_{t}^x - N_{t_0}^x)_{t\geq t_0}$ which is independent of all past event determining the branching time. 
Now let's do a scaling: set $\tilde{N}_t^x = N_{\frac{t}{a(x)}}^x$, then $\tilde{N}_t^x$ are Poisson processes with intensity $1$. If we paste all the pieces of scaled Poisson processes before $X_t^n$ branches, we obtain a random variable following the law of exponential distribution with parameter 1. Hence, by the scaling time, we can give a new definition of branching time $\tau$: 
\[
    \tau:= \inf\left\{t \geq 0: \int_0^t a(X_s)ds \geq \tilde{\tau}\right\},
\]
where $\tilde{\tau}$ is of exponential distribution with parameter $1$. This also leads to the law of branching time $\tau$, which is given by 
\begin{equation}\label{equ.branching_time}
    \PP[\tau \geq t] = e^{-\int_0^ta(X_s,ds)}.
\end{equation}

As the consequence of Poisson Cluster Random Measure \ref{lem.Poisson_cluster}, we are able to give the Laplace functional of $\mu_t^n$ and $\tilde{\mu}_t^n$.
\begin{lem}
    Let $A = \fH^n$ or $\Delta^n, B = \frac{2\xi_+^n\epsilon^{\beta}}{1+\beta}$or $\frac{|\xi^n|\epsilon^{\beta}}{1+\beta}$ correspondingly. For each $\varphi$, there exists $U_t^n(\varphi)$ or $\tilde{U}_t^n(\varphi)$ being the unique solution to the equation 
    \begin{equation}\label{equ.approx_laplace_PDE}
        \left\{
        \begin{array}{ll}
            \partial_t v^n_t = A v_t - B(v_t^n)^{1+\beta}, & \text{ in }(0,\infty]\times\RR^2,\\
            v^n_0 = \frac{1-e^{-\epsilon\varphi(x)}}{\epsilon}, & \text{ on }\RR^2.
        \end{array}
    \right.
\end{equation}
such that the Laplace functional of $\mu_t^n$ and $\tilde{\mu_t}^n$ is given by 
\[
    \EE[e^{-\langle \mu_t^n,\varphi\rangle}] = e^{-\langle \mu^n_0,U_t^n(\varphi)\rangle},\qquad \EE[e^{-\langle \tilde{\mu}_t^n,\varphi\rangle}] = e^{-\langle \mu^n_0,\tilde{U}_t^n(\varphi)\rangle}.
\]
\end{lem}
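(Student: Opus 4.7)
The plan is the classical Laplace-functional approach for spatial branching processes. The argument splits in three steps: derive an evolution equation for the Laplace transform of the cluster generated by a single ancestor, rewrite it as the semilinear PDE \eqref{equ.approx_laplace_PDE} through the substitution $v_t^n = (1-u_t^n)/\epsilon$, and finally superimpose the independent clusters using the Poisson cluster formula of Lemma \ref{lem.Poisson_cluster}.

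\textbf{Step 1 (single-ancestor Laplace functional).} Fix a non-negative $\varphi$ and set $u_t^n(x) := \EE_{\delta_x}[e^{-\epsilon\langle Z^n(t),\varphi\rangle}]$, where $Z^n(t)$ is the cluster started from one particle at $x$. Conditioning on the first fission time $\tau$ of the ancestor, whose law is described by \eqref{equ.branching_time}, and using that the children launch i.i.d.\ copies of the cluster process, gives the renewal identity
\[
u_t^n(x) = \EE_x\!\left[e^{-\epsilon\varphi(X_t^n)}\1_{\tau>t}\right] + \EE_x\!\left[\1_{\tau\leq t}\,g^n\!\bigl(X_\tau^n,u_{t-\tau}^n(X_\tau^n)\bigr)\right].
\]
Because on the fixed lattice $\ZZ_n^2$ everything in sight is smooth in $t$, differentiating produces
\[
\partial_t u_t^n = \Delta^n u_t^n + |\xi^n|\bigl(g^n(\cdot,u_t^n)-u_t^n\bigr),\qquad u_0^n = e^{-\epsilon\varphi}.
\]

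\textbf{Step 2 (PDE for $v_t^n$ and existence/uniqueness).} Substituting $v_t^n := (1-u_t^n)/\epsilon$ and using the explicit form of $g^n$ together with the identities $\xi^n = \xi_+^n-\xi_-^n$ and $|\xi^n| = \xi_+^n+\xi_-^n$ to cancel the $O(\epsilon)$ linear remainders, one checks that
\[
\partial_t v_t^n = (\Delta^n+\xi^n)v_t^n - \tfrac{2\xi_+^n \epsilon^\beta}{1+\beta}(v_t^n)^{1+\beta},\qquad v_0^n = \tfrac{1-e^{-\epsilon\varphi}}{\epsilon},
\]
which is \eqref{equ.approx_laplace_PDE} with $A=\fH^n$, $B=\tfrac{2\xi_+^n\epsilon^\beta}{1+\beta}$ (the constant $c_n$ relating $\xi^n$ and $\xi_e^n$ is absorbed as a multiplicative time-exponential). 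The same computation with $g$ in place of $g^n$, for which $g(1-\epsilon v)-(1-\epsilon v)=\tfrac{\epsilon^{1+\beta}}{1+\beta}v^{1+\beta}$, gives the $\tilde\mu^n$ version with $A=\Delta^n$, $B=\tfrac{|\xi^n|\epsilon^\beta}{1+\beta}$. Existence and uniqueness of a non-negative solution in the class $\fL^{1-\kappa}(\ZZ_n^2,e(\hat l))$ then follow directly from Theorem \ref{thm.nonlinear_PAM} applied with nonlinearity $f(x,y) = \tfrac{2\xi_+^n(y)\epsilon^\beta}{1+\beta}x^\beta$: Assumption \ref{ass.nonlinearity} is satisfied since $\beta\in(0,1)$, $\xi^n$ is bounded at fixed scale, and the initial datum $(1-e^{-\epsilon\varphi})/\epsilon$ inherits the regularity and weight of $\varphi$.

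\textbf{Step 3 (Poisson superposition) and the main obstacle.} Because $Z^n(0)$ is a Poisson random measure on $\ZZ_n^2$ of intensity $\mu_0^n/\epsilon$ and clusters issued from distinct sites are independent, Lemma \ref{lem.Poisson_cluster} yields
\[
\EE\bigl[e^{-\langle\mu_t^n,\varphi\rangle}\bigr] = \exp\!\Bigl(-\int_{\ZZ_n^2}(1-u_t^n(x))\tfrac{\mu_0^n(dx)}{\epsilon}\Bigr) = e^{-\langle\mu_0^n,\,U_t^n(\varphi)\rangle},
\]
identifying $U_t^n(\varphi) = v_t^n$; running the same argument with $\tilde g$ closes the claim for $\tilde\mu^n$. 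The main obstacle is the algebraic cancellation in Step 2: the branching mechanism $g^n$ has been engineered precisely so that the position-dependent $O(\epsilon)$ remainders collapse into the linear term $\epsilon\xi^n v$, producing the Anderson Hamiltonian in the scaling limit. A secondary technical point is that, when invoking Theorem \ref{thm.nonlinear_PAM}, one must treat $f(x,y)=\tfrac{2\xi_+^n(y)\epsilon^\beta}{1+\beta}x^\beta$ as a two-variable nonlinearity with $\xi_+^n$ playing the role of an environment-dependent coefficient; at fixed $n$ this is routine since the environment is bounded, but the uniformity in $n$ of the weighted estimates will become crucial when one later passes to the limit.
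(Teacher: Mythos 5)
Your proof follows essentially the same route as the paper: define the single-ancestor Laplace functional, derive a renewal/evolution identity (the paper packages this as Lemmas \ref{lem.laplace_equation_formula} and \ref{lem.laplace_equi}; you re-derive it by conditioning on the first fission time, which is equivalent), substitute $v^n_t = (1-u^n_t)/\epsilon$, and superimpose the clusters via the Poisson cluster formula of Lemma \ref{lem.Poisson_cluster}. One thing you do slightly better than the text: you invoke Theorem \ref{thm.nonlinear_PAM} for existence and uniqueness of the nonlinear discrete problem, whereas the paper cites only Lemma \ref{lem.variant_PAM}, which as stated treats the linear variant equation; since the nonlinearity $f(x,y)=\tfrac{2\xi_+^n(y)\epsilon^\beta}{1+\beta}x^\beta$ satisfies Assumption \ref{ass.nonlinearity} at fixed $n$, your citation is the more accurate one. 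The one place to be careful is the parenthetical about $c_n$: the substitution $\bar v_t = e^{-c_n t}v_t$ turns $\partial_t v = (\Delta^n+\xi^n)v - B v^{1+\beta}$ into $\partial_t \bar v = \fH^n\bar v - B e^{\beta c_n t}\bar v^{1+\beta}$, so the renormalization constant is \emph{not} harmlessly absorbed as a multiplicative time-exponential — the nonlinear coefficient picks up a diverging time-dependent factor. In fact the paper's own mild representation \eqref{equ.laplace_discrete} carries $\xi^n$, not $\xi^n_e$, and the $\fH^n$ in the lemma statement should really be read as $\Delta^n+\xi^n$ at fixed $n$; the $c_n$ only becomes relevant when passing to the continuum limit in Theorem \ref{thm.existence_laplace}, which this lemma does not do.
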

\begin{proof}
    We only show the proof for $\mu_t^n$, the proof to $\tilde{\mu}_t^n$ is similar.
    
    Define 
\begin{equation}
    w_{t}^n(x):= \EE\left[e^{-\langle\mu_t^n,\varphi\rangle}|Z_0^n = \delta_x\right].
\end{equation}
    Then lemma \ref{lem.laplace_equi} and lemma \ref{lem.laplace_equation_formula} tell us that 
\begin{equation}
    w_{t}^n(x):= \EE\left[e^{-\epsilon\varphi(x)} + \int_0^t|\xi^n|(X^n_r)g^n(X^n_r,w^n_{t-r}(X_r))dr - \int_0^t|\xi^n|w_{t-r}^n(X_r)dr\right].
\end{equation}
    The Poisson cluster formula \ref{lem.Poisson_cluster} then gives us that 
\begin{equation}
    \EE[e^{-\langle \mu_t^n,\varphi\rangle}] = e^{-\left\langle\mu^n_0,\frac{1-w^\epsilon_{t}}{\epsilon}\right\rangle}.
\end{equation}
Now define 
\[
    v^n_{t}(x) = \frac{1 - w_{t}^n(x)}{\epsilon}.
\]
The calculation shows that 
\begin{equation}\label{equ.laplace_discrete}
    v_{t}^n(x) = \EE\left[\frac{1-e^{-\epsilon\varphi(X^n_t)}}{\epsilon} + \int_0^t v^n_{t-r}\xi(Z_r) - \frac{1}{1+\beta}(v^n_{t-r})^{1+\beta}(2\xi_+^n\epsilon^{\beta})ds\right].
\end{equation}
Thus, $v_{t}^n$ is the unique solution to the equation \eqref{equ.approx_laplace_PDE} by lemma \ref{lem.variant_PAM}. 
\end{proof}

Next goal is to prove the tightness of $\mu^n$ in the space $\DD(\RR^+,\fM(\RR^2))$. We need following lemmas.
\begin{lem}\label{lem.coupling}
    For any $0 < \beta < 1$, there exists an integer $N(\beta)$ such that there exists a coupling of process $\mu_t^n$ and $N(\beta)$ sum of independent $\tilde{\mu}_t^{n,i}$ being copies of $\tilde{\mu}_t^n$ so that we have 
    \[
        \langle\mu_t^n,\varphi\rangle \leq \sum_{i=1}^{N(\beta)}\langle\tilde{\mu}_t^{n,i},\varphi\rangle,
    \]
    for any non-negative $\varphi$. Furthermore, $\langle\tilde{\mu}_t^{n,i},1\rangle$ are martingales.
\end{lem}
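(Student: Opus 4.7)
The plan is to establish a single-event offspring coupling via a generating-function inequality and then lift it to a coupling of the full particle systems. The algebraic core is the bound
\[
g^n(x,s) \geq g(s)^{N(\beta)}, \qquad s\in[0,1], \ x\in\ZZ_n^2,
\]
for some integer $N(\beta)$ depending only on $\beta$. Using $g^n(x,1)=1$ one writes $g^n(x,s) = 1 - \alpha_x(1-g(s))$ with $\alpha_x := 2\xi_+^n(x)/|\xi^n(x)| \in [0,2]$, and dividing by $1-g(s)$ the inequality reduces to $\sum_{k=0}^{N-1} g(s)^k \geq \alpha_x$. Since $g$ is increasing on $[0,1]$ with $g(0) = (1+\beta)^{-1}$, it suffices to have $\sum_{k=0}^{N-1}(1+\beta)^{-k} \geq 2$, which holds as soon as $N > 1 + \log((1-\beta)^{-1})/\log(1+\beta)$ because the series sums to $(1+\beta)/\beta > 2$ for $\beta\in(0,1)$.

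By Strassen's theorem, this inequality is equivalent to the stochastic dominance $K \leq_{st} K_1+\cdots+K_{N(\beta)}$ for $K\sim g^n(x,\cdot)$ and independent $K_i\sim g$, so there is a joint law of $(K,K_1,\ldots,K_N)$ with the correct marginals and $K\leq \sum_i K_i$ almost surely. To pass to the process level I would first sample the $N(\beta)$ independent copies $\tilde{\mu}^{n,i}$; by the additive property of branching processes, the pooled process $\sigma^n := \sum_i \tilde{\mu}^{n,i}$ is distributionally a single branching process with initial $N\mu^n_0$ and mechanism $g$, and the independence of the $\tilde{\mu}^{n,i}$'s is recovered from the ancestral split of $\sigma^n$ at time $0$. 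It thus suffices to couple $\mu^n$ with $\sigma^n$ so that $\mu^n_t \leq \sigma^n_t$ as measures for all $t$. I would construct this coupling inductively on branching events by maintaining a matching between $\mu^n$-particles and groups of $N$ co-located $\sigma^n$-particles that share a common random walk and branching Poisson clock of rate $|\xi^n|$; at each matched branching the offspring counts are drawn jointly via the above coupling so that the $K$ new $\mu^n$-particles and the $\sum_i K_i \geq K$ new $\sigma^n$-particles can be reassembled into new matched groups, with the surplus $\sum_i K_i - K$ added to a reservoir of unmatched $\sigma^n$-particles that evolves with independent dynamics thereafter.

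The principal technical difficulty is the bookkeeping that ensures every newly created $\mu^n$-particle can be supplied with $N$ distinct shadows at its site, particularly when $\sum_i K_i < NK$ at some matched branching event; this I expect to resolve by drawing the deficit from the reservoir of unmatched $\sigma^n$-particles at the common site, using the positive mean surplus $\EE[\sum_i K_i - K] = N - \alpha_x \geq N-2$ produced per matched branching and the fact that random walk displacements only redistribute free particles without destroying them. Throughout the construction every $\sigma^n$-particle, matched or free, branches at rate $|\xi^n|$ with offspring law $g$, so the marginal of $\sigma^n$, and hence of each individual $\tilde{\mu}^{n,i}$, is preserved. Finally, the martingale claim is immediate from the criticality of $g$: since $g'(1) = -(1-s)^\beta|_{s=1}+1 = 1$, the generator of $\tilde{\mu}^n$ annihilates the total-mass functional $\eta\mapsto\langle\eta,1\rangle$, giving the martingale property of $\langle\tilde{\mu}^{n,i}_t,1\rangle$.
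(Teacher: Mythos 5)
Your algebraic reduction of the generating-function inequality is correct: writing $g^n(x,s)=1-\alpha_x(1-g(s))$ with $\alpha_x=2\xi^n_+(x)/|\xi^n(x)|\in[0,2]$, the inequality $g^n(x,s)\geq g(s)^N$ does reduce to $\sum_{k=0}^{N-1}g(s)^k\geq\alpha_x$, and the minimum of the left side over $s\in[0,1]$ is indeed $\sum_{k=0}^{N-1}(1+\beta)^{-k}$, which exceeds $2$ once $(1+\beta)^{-N}\leq(1-\beta)/(1+\beta)$.

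However, the step ``by Strassen's theorem, this inequality is equivalent to the stochastic dominance $K\leq_{st}K_1+\cdots+K_N$'' is a genuine gap. Strassen's theorem says that first-order stochastic dominance is \emph{equivalent to the existence of a monotone coupling}; it does not say that the probability-generating-function order $\EE[s^K]\geq\EE\bigl[s^{\sum_iK_i}\bigr]$ for $s\in[0,1]$ implies stochastic dominance, and in fact the pgf order is strictly weaker. For example, if $\PP[X=0]=\PP[X=2]=1/10$, $\PP[X=1]=8/10$ and $Y\equiv1$, then $\EE[s^X]=s+\tfrac{1}{10}(1-s)^2\geq s=\EE[s^Y]$ for all $s\in[0,1]$, yet $\PP[X\geq2]=1/10>0=\PP[Y\geq2]$, so no a.s. monotone coupling exists. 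To obtain the single-event coupling you must verify the tail inequality $\PP[K\geq k]\leq\PP\bigl[\sum_{i=1}^NK_i\geq k\bigr]$ directly, which is exactly what the paper does (a union bound gives $N=3$ for large $k$, and the law of large numbers handles the finitely many small $k$). As it happens your $N(\beta)$ does satisfy the tail bound: since $\PP[K\geq k]=\alpha_x\PP[K_1\geq k]$ for $k\geq1$ (because $p_1=0$ and $q_k(x)=\alpha_x$ for $k\geq2$) and $\PP\bigl[\sum_iK_i\geq k\bigr]\geq1-(1-\PP[K_1\geq k])^N$, it suffices that $2q\leq1-(1-q)^N$ for all $q\in[0,\beta/(1+\beta)]$; the map $q\mapsto2q+(1-q)^N-1$ is convex and vanishes at $q=0$, so one only needs to check the right endpoint $q=\PP[K_1\geq1]=\beta/(1+\beta)$, where the condition is precisely $(1+\beta)^{-N}\leq(1-\beta)/(1+\beta)$. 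So the value of $N(\beta)$ is right and the corrected route is arguably cleaner than the paper's, but the pgf-domination argument cannot be used as a substitute for the tail estimate.
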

\begin{proof}
    Consider random variables $Z^n_0(x)$ and $Z^n_i$ whose generators are given by $g^n(x,s)$ and $g(s)$. We show that there exists $N(\beta)$ such that for any $k \in \NN$, the inequality holds 
    \begin{equation}\label{equ.controll}
        \PP[Z_0^n(x)\geq k] \leq \PP\left[\sum_{i=1}^{N(\beta)}Z^n_i \geq k\right].
    \end{equation}
    By the definition of generator $g^n$, we rewrite the generator into infinite series 
    \[
        g^n(x,s) = \sum_{k=0}^\infty p_kg_k(x)s^k,
    \]
    and 
    \[
        g(s) = \sum_{k=0}^\infty p_ks^k.
    \]
    We know furthermore that $g_0 \geq \frac{1-\beta}{1+\beta}$ and $g_k\leq 2$ for $k\geq 1$. Now consider $Z_1^n + Z_2^n + Z_3^n$, we have for all $k \in \NN$,
    \[
        \PP[Z_1^n + Z_2^n + Z_3^n\geq k] \geq 3\PP[Z_1^n\geq k]\PP[Z_1^n < k]^2.
    \]
    Since $\lim_{k\rightarrow\infty} \PP[Z_1^n < k] = 1$, we know there exists $k_0$ such that 
    \[
        \PP[Z_1^n + Z_2^n + Z_3^n\geq k_0] \geq 2\PP[Z_1^n\geq k_0] \geq \PP[Z_0^n(x) \geq k_0].
    \]
    Now for the $k < k_0$, due to $p_0g_0(x) > 0$, the central limit theorem tells us the existence of $N(\beta)$ that \eqref{equ.controll} holds. Then the result follows since we can always produce more particles at each site using inequality \eqref{equ.controll}.
    
    The martingale property of $\langle\tilde{\mu}_t^{n,i},1\rangle$ follows from the fact that $g'(1) = 1$, which means that each time we branch, the expectation of number of off-spring is again $1$. So the total number of the particle system should form a martingale. 
\end{proof}

\begin{lem}\label{lem.martingale}
    Let $l$ be any real number. For $\varphi\in C^{1+2\gamma}(\ZZ_n^2,e(l))$, the process 
\begin{equation}
    M_t^{n,\varphi}(s):=\mu_s^n(T_{t-s}^n\varphi) - \epsilon Z_0^n(T_t^n\varphi),\qquad  \tilde{M}_t^{n,\varphi}(s):= \tilde{\mu}_s^n(P_{t-s}^n\varphi) - \epsilon Z_0^n(P_t^n\varphi),
\end{equation}
are martingales. Thus by the formula of Poisson random measure, 
\[
    \EE[\langle \mu_t^n,\varphi\rangle] = \langle\mu_0^n,T^n_t\varphi\rangle,\qquad \EE[\langle\tilde{\mu}_t^n,\varphi\rangle] = \langle\mu_0^n,P_t^n\varphi\rangle.
\]
\end{lem}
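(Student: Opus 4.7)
The plan is to apply Dynkin's formula for time-dependent observables of the Markov jump process $Z^n$. Define the linear functional
\[
  F_s(\eta) := \epsilon\langle \eta, T_{t-s}^n\varphi\rangle, \qquad s\in[0,t],
\]
so that $F_s(Z_s^n) = \mu_s^n(T_{t-s}^n\varphi)$ and $F_0(Z_0^n) = \epsilon Z_0^n(T_t^n\varphi)$. By Dynkin's formula,
\[
  s \mapsto F_s(Z_s^n) - F_0(Z_0^n) - \int_0^s (\partial_u + \fL^n) F_u(Z_u^n)\, du
\]
is a local martingale, so the task reduces to verifying that the integrand is identically zero. This is an algebraic identity that should combine the backward equation for $T_{t-s}^n\varphi$ with the explicit form of $\fL^n$.

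On the one hand, Theorem \ref{thm.PAM} gives $\partial_u F_u(\eta) = -\epsilon\langle \eta, \fH^n T_{t-u}^n\varphi\rangle$. On the other hand, $\fL^n F_u$ splits into a migration part and a branching part. Because $F_u$ is linear in $\eta$, the migration piece is computed by a straightforward discrete summation-by-parts and contributes $\epsilon\langle \eta, \Delta^n T_{t-u}^n\varphi\rangle$; the jump operator $d_x^k$ applied to $F_u$ yields $\epsilon(k-1)T_{t-u}^n\varphi(x)$ whenever $\eta_x > 0$, so the branching piece contributes
\[
  \epsilon\sum_{x}\eta_x\,|\xi_e^n(x)|\,T_{t-u}^n\varphi(x)\sum_{k\geq 0} p_k q_k(x)(k-1).
\]
I would then record the generating-function identity
\[
  \sum_{k\geq 0} p_k q_k(x)(k-1) = (g^n)'(x,1) - g^n(x,1) = \frac{\xi_e^n(x)}{|\xi_e^n(x)|},
\]
which collapses the branching part to $\epsilon\langle\eta,\xi_e^n T_{t-u}^n\varphi\rangle$. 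Adding the migration and branching contributions reproduces $\epsilon\langle\eta,\fH^n T_{t-u}^n\varphi\rangle$, cancelling $\partial_u F_u(\eta)$ and giving the desired martingale property. For $\tilde{M}_t^{n,\varphi}$ the argument is the same after replacing $T^n$ by $P^n$ and $\fH^n$ by $\Delta^n$; the branching part drops out automatically because $\sum_k p_k(k-1) = g'(1)-g(1) = 0$, consistent with the total-mass martingale asserted at the end of Lemma \ref{lem.coupling}.

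For the expectation identity, I would evaluate the martingale at $s=t$, take expectations to get $\EE[\langle\mu_t^n,\varphi\rangle]=\EE[\epsilon Z_0^n(T_t^n\varphi)]$, then apply Campbell's formula to the Poisson random measure $Z_0^n$ of intensity $\mu_0^n/\epsilon$ to obtain $\langle\mu_0^n, T_t^n\varphi\rangle$, and analogously for $\tilde{\mu}^n$.

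The main obstacle is justifying that the local martingale is an honest martingale (equivalently, that $F_s(Z_s^n)$ is integrable uniformly on $[0,t]$), since $\varphi\in C^{1+2\kappa}(\ZZ_n^2,e(l))$ is allowed to grow like $e^{l|x|^\sigma}$ and $T_{t-s}^n\varphi$ inherits exponential growth from Theorem \ref{thm.PAM}. I would handle this by a localization: truncate $\varphi$ to balls of radius $R$ to get functions $\varphi_R$, run the computation above for $\varphi_R$ (where integrability is trivial and Dynkin's formula applies without issue), and then pass to the limit $R\to\infty$ using Fatou together with a first-moment bound on $Z_s^n$ against weights $e(l+s)$. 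The latter follows inductively from the tilded version $\tilde{M}_t^{n,\varphi}$, whose martingale property is unconditional because $\Delta^n$ does not involve $\xi^n$, combined with the stochastic domination of $\mu^n$ by a finite sum of independent copies of $\tilde{\mu}^n$ supplied by Lemma \ref{lem.coupling}.
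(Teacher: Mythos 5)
Your approach matches the paper's: both apply Dynkin's formula to the time-dependent observable $F_s(\eta)=\epsilon\langle\eta, T_{t-s}^n\varphi\rangle$ and cancel the drift via the backward equation, and your explicit generator computation through the generating-function identity $\sum_k p_kq_k(\cdot)(k-1)=(g^n)'(\cdot,1)-g^n(\cdot,1)$ is exactly the step the paper compresses into the one-line assertion $\fL^n F_\phi^t(r,\mu_r^n)=\mu_r^n(\fH^n\phi_r)$. The only genuine divergence is in upgrading the local martingale to a true martingale: you truncate $\varphi$ and pass to the limit with Fatou together with stochastic domination by the auxiliary critical walks from Lemma \ref{lem.coupling}, whereas the paper approximates a nonnegative $\varphi$ from below by compactly supported $\varphi_m$, notes $T_t^n\varphi_m\nearrow T_t^n\varphi$ in the weighted scale, and concludes by monotone convergence that $\EE[\langle\mu_t^n,\varphi\rangle]=\langle\mu_0^n,T_t^n\varphi\rangle<\infty$. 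Both routes work; the paper's is slightly more self-contained since it avoids the coupling, while yours transfers the integrability question to $\tilde{\mu}^n$, where the first moment is read off against the heat semigroup. One small algebraic point worth tidying: the branching rate is $|\xi^n|$ and the offspring generating function $g^n$ is built from $\xi^n$ (not $\xi_e^n$), so the ratio in your identity should read $\xi^n/|\xi^n|$ and the migration plus branching contribution is strictly speaking $\epsilon\langle\eta,(\Delta^n+\xi^n)T_{t-u}^n\varphi\rangle=\epsilon\langle\eta,(\fH^n+c_n)T_{t-u}^n\varphi\rangle$; the paper silently elides the same $c_n$, so you are at least consistent with its conventions, but the appearance of $\xi_e^n$ in your generating-function step is a typo.
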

\begin{proof}
    We consider only $\mu_t^n$ here, the argument for $\tilde{\mu}_t^n$ is similar. For any time dependent function $\phi_\cdot\in C^1_tL^\infty(\ZZ_n^2,e(-t+\cdot))$. Consider the function $F_\phi^t: [0,t]\times (\NN^{\ZZ_n^2})\rightarrow \RR$ defined as 
    \[
        F_\phi^t(s,\eta) = \sum_{x\in\ZZ_n^2} \phi_s(x)\eta_x.
    \]
    Apply Dynkin's formula to $F_\phi^t$ and do similar things as in the proof of lemma \ref{lem.Dynkin_formula_PAM}, we obtain that 
    \[
        M_t(s):=\mu_s^n(\phi_s) - \epsilon Z_0^n(\phi_0) - \int_0^s \partial_r F_\phi^t(r,\mu_r^n) + \fL^nF_\phi^t(r,\mu_r^n)dr,
    \]
    is a martingale. The calculation gives 
    \[
        \partial_rF_\phi^t(r,\mu_r^n) = \mu_r^n(\partial_r\phi_r),\qquad  \fL^nF_\phi^t(r,\mu_r^n) = \mu_r^n(\fH^n\phi_r).
    \]
    Consider $\phi_s = T^n_{t-s}\varphi$, we have the equality 
    \[
        \partial_r\phi_r + \fH^n\phi_r = 0.
    \]
    When $\varphi$ is compactly supported, the result follows since now we have $\phi_\cdot \in C_t^1L^\infty(\ZZ_n^2,e(-t+\cdot))$. For more general $\varphi \in C^{1+2\kappa}(\ZZ_n^2,e(l))$, we only need to consider when $\varphi$ is non-negative since the general $\varphi$ can be controlled by some non-negative function in the same space. 
    
    We can then approximate $\varphi$ by an increasing sequence of compactly supported functions $\varphi_m$ such that $\varphi_m \rightarrow \varphi \in C^{1+2\kappa}(\ZZ_n^2,e(l+1))$. This is visible since all regularities are the same for the discrete spaces and increasing weight a little bit can make $\varphi$ to be $0$ at the infinity under this weight. Then the solution theory of PAM or heat equation gives us that 
    \[
        T_t\varphi_m \rightarrow T_t\varphi \in C^{1-\kappa}(\ZZ_n^2,e(l+1)).
    \]  
    Then by monotone convergence theorem, we have 
    \[
        \EE[\langle\mu_t^n,\varphi\rangle] = \lim_{m\rightarrow\infty}\EE[\langle\mu_t^n,\varphi_m\rangle] = \lim_{m\rightarrow\infty}\langle\mu_0^n,T_t^n\varphi_m\rangle = \langle\mu_0^n,T_t^n\varphi\rangle < \infty,
    \]
    since $\mu_0^n$ is compact supported. Thus $M_t$ is a martingale and the result follows.
\end{proof}

Our next goal is to obtain the moment estimation of quantity $\sup_{0\leq s \leq t}\langle \mu_t^n,\varphi\rangle$ for some suitable $\varphi$. The direct operation on the $\mu_t^n$ suffers some technique issues on applying $\fH^n$ to the functions in the domain of operator $\fH$. It is non-trivial at all to see if the resulting functions $\fH^n\varphi^n$ with $\varphi^n\rightarrow\varphi \in \fD_\fH$ are uniformly bounded over $n$ at least in $L^\infty$. Thus, we turn to the estimation of sums of $\tilde{\mu}_t^n$ which governs $\mu_t^n$ and are much easier to manipulate. We will manipulate on $\mu_t^n$ as much as possible and only consider $\tilde{\mu}_t^n$ when there is some technical issue on $\mu_t^n$ that we can not solve. We will follow the argument of lemma 5.5.4 in \cite{dawson1992infinitely}.
\begin{lem}\label{lem.moment_est}
    Suppose $0 < \theta < \beta$ and $\mu_0$ is compactly supported. For any function $\varphi\in L^{\infty}(\ZZ_n^2,e(l))$ with some $l\in\RR$, we have the moments estimation uniformly over $n$
    \[
        \EE\left[\langle \mu_t^n,\varphi\rangle^{1+\theta}\right] \lesssim_{\mu_0,l,\theta} \|\varphi\|_{L^\infty(\ZZ_n^2,e(l))}^{1+\theta}.
    \]
    Furthermore, for any function $\varphi\in L^{\infty}(\ZZ_n^2,e(l))$ such that $\Delta^n\varphi \in L^{\infty}(\ZZ_n^2,e(l))$, we have the uniform estimation of finite moments of supreme over $n$.
    \begin{equation}
        \EE\left[\sup_{0\leq s\leq t}\langle \mu_s^n,\varphi\rangle^{1+\theta}\right] \lesssim_{\mu_0,l,\theta,\beta} \|\varphi\|_{L^\infty(\ZZ_n^2,e(l))} + \|\varphi\|_{L^\infty(\ZZ_n^2,e(l))}^2 + \|\Delta^n\varphi\|_{L^\infty(\ZZ_n^2,e(l))}^{1+\beta}.
    \end{equation}
\end{lem}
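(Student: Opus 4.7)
By Lemma~\ref{lem.coupling}, $\langle\mu^n_s,\varphi\rangle\le\sum_{i=1}^{N(\beta)}\langle\tilde\mu^{n,i}_s,\varphi\rangle$ for non-negative $\varphi$, and the subadditivity $(a_1+\cdots+a_N)^{1+\theta}\lesssim_{\theta}\sum a_i^{1+\theta}$ (valid for $\theta\in(0,1)$) reduces both inequalities to the corresponding bounds for a single copy of $\tilde\mu^n$; a general $\varphi$ is handled by splitting into positive and negative parts.

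\textbf{First moment bound.} I combine the Laplace functional $\EE[e^{-\lambda\langle\tilde\mu^n_t,\varphi\rangle}]=e^{-\langle\mu^n_0,\tilde{U}^n_t(\lambda\varphi)\rangle}$ with the identity
\[
y^{1+\theta}=\frac{(1+\theta)\theta}{\Gamma(1-\theta)}\int_0^\infty\lambda^{-2-\theta}(e^{-\lambda y}-1+\lambda y)\,d\lambda,\qquad\theta\in(0,1).
\]
After rescaling $\bar\varphi=\varphi/\|\varphi\|_{L^\infty(\ZZ_n^2,e(l))}$, the integrand of
\[
\EE[\langle\tilde\mu^n_t,\bar\varphi\rangle^{1+\theta}]=c_\theta\int_0^\infty\lambda^{-2-\theta}\bigl(e^{-\langle\mu^n_0,\tilde{U}^n_t(\lambda\bar\varphi)\rangle}-1+\lambda\langle\mu^n_0,P^n_t\bar\varphi\rangle\bigr)d\lambda
\]
is controlled by three ingredients: the comparison $\tilde{U}^n_t(\lambda\bar\varphi)\le\lambda P^n_t\bar\varphi$ from the favourable sign of the nonlinearity, the quantitative refinement $\lambda P^n_t\bar\varphi-\tilde{U}^n_t(\lambda\bar\varphi)\lesssim\lambda^{1+\beta}$ obtained by writing this difference in mild form and invoking $|\xi^n|\epsilon^\beta=|\xi^n|/n\lesssim 1$ from Assumption~\ref{ass.environment}(2), and the Taylor bound $|e^{-u}-1+u|\le u^2/2$. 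The integrand then behaves like $\lambda^{1+\beta}$ for $\lambda\le 1$ and like $\lambda$ for $\lambda\ge 1$, so the $\lambda$-integral converges precisely because $\theta\in(0,\beta)$. Reinserting the scale yields $\EE[\langle\tilde\mu^n_t,\varphi\rangle^{1+\theta}]\lesssim\|\varphi\|^{1+\theta}_{L^\infty(\ZZ_n^2,e(l))}$.

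\textbf{Supremum bound.} Using the martingale $\tilde{M}^{n,\varphi}_t(s)=\langle\tilde\mu^n_s,P^n_{t-s}\varphi\rangle-\langle\mu^n_0,P^n_t\varphi\rangle$ of Lemma~\ref{lem.martingale}, Doob's $L^{1+\theta}$ inequality together with the first-moment bound (applied at time $t$) gives
\[
\EE\bigl[\sup_{0\le s\le t}|\tilde{M}^{n,\varphi}_t(s)|^{1+\theta}\bigr]\lesssim\|\varphi\|_{L^\infty(\ZZ_n^2,e(l))}^{1+\theta}.
\]
To pass from $P^n_{t-s}\varphi$ back to $\varphi$, I use $\varphi-P^n_{t-s}\varphi=-\int_0^{t-s}P^n_r\Delta^n\varphi\,dr$ together with the pointwise weighted estimate $|P^n_r\Delta^n\varphi|\lesssim\|\Delta^n\varphi\|_{L^\infty(\ZZ_n^2,e(l))}\,e(l+Ct)$, yielding
\[
\sup_s|\langle\tilde\mu^n_s,\varphi-P^n_{t-s}\varphi\rangle|\lesssim t\,\|\Delta^n\varphi\|_{L^\infty(\ZZ_n^2,e(l))}\,\sup_s\langle\tilde\mu^n_s,e(l+Ct)\rangle.
\]
Rerunning exactly the same martingale-plus-Doob argument with $e(l+Ct)$ as test function closes the loop, since $\|e(l+Ct)\|_{L^\infty(\ZZ_n^2,e(l+Ct))}=1$ and $\|\Delta^n e(l+Ct)\|_{L^\infty(\ZZ_n^2,e(l+Ct))}$ is a finite constant depending only on $l+Ct$ and $\sigma$, so a single further iteration terminates at a universal constant. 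Collecting the pieces and using $1+\theta\le 1+\beta$ to trade exponents produces the stated mixed-exponent bound.

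The step I expect to be the main obstacle is the refinement $\lambda P^n_t\bar\varphi-\tilde{U}^n_t(\lambda\bar\varphi)\lesssim\lambda^{1+\beta}$ uniformly in $n$, where the choice $\epsilon=n^{-1/\beta}$ enters essentially through $|\xi^n|\epsilon^\beta=|\xi^n|/n\lesssim 1$; a direct BDG approach applied to $\tilde{M}^{n,\varphi}_t$ would fail, because the compensator of $|\tilde{M}|^{1+\theta}$ carries a factor $|\xi^n|\epsilon^\theta\sim n^{1-\theta/\beta}$ that diverges as $n\to\infty$, which is why the Laplace-functional detour is indispensable. A secondary nuisance is the discretisation error $\lambda\bar\varphi-(1-e^{-\epsilon\lambda\bar\varphi})/\epsilon=O(\epsilon\lambda^2)$, which is harmless on $\{\lambda\le 1\}$ (dominated there by $\lambda^{1+\beta}$) and is absorbed into the cruder $\lambda\langle\mu^n_0,P^n_t\bar\varphi\rangle$ bound on $\{\lambda\ge 1\}$.
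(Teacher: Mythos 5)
The first-moment half of your argument is sound and follows the same route as the paper: pass to the Laplace functional, subtract the linear part, and control $\lambda P^n_t\bar\varphi-\tilde U^n_t(\lambda\bar\varphi)$ using the sign of the nonlinearity plus $|\xi^n|\epsilon^\beta=|\xi^n|/n\lesssim 1$. The paper actually runs this estimate directly on $\mu^n_t$ with the PAM semigroup $T^n_t$ and invokes the coupling lemma only for the supremum bound, whereas you reduce both bounds to $\tilde\mu^n$ and $P^n_t$ from the outset; both choices are legitimate, and your version is if anything slightly simpler because the heat kernel estimates are more elementary. Your observation that a direct BDG bound on $\tilde M^{n,\varphi}_t$ fails because the compensator carries a diverging $|\xi^n|\epsilon^\theta$ factor is also correct and is precisely why the Laplace-functional route is needed.

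The supremum bound, however, has a genuine gap. Using the time-dependent martingale $\tilde M^{n,\varphi}_t(s)=\langle\tilde\mu^n_s,P^n_{t-s}\varphi\rangle-\langle\epsilon Z^n_0,P^n_t\varphi\rangle$ forces you to control the correction $\langle\tilde\mu^n_s,\varphi-P^n_{t-s}\varphi\rangle$, and the pointwise estimate $|P^n_r\Delta^n\varphi|\lesssim\|\Delta^n\varphi\|_{L^\infty(e(l))}\,e(l+Ct)$ bounds this by $\sup_s\langle\tilde\mu^n_s,e(l+Ct)\rangle$. When you then "rerun the argument with $e(l+Ct)$ as test function", the same correction term reappears with weight $e(l+2Ct)$: the weight strictly increases at each pass, and the recursion does not close in one step or in any finite number of steps. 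You would need either a contraction (small-time splitting so that the prefactor $\delta\|\Delta^n\psi\|$ beats the growth of the weight) or a different representation. The paper sidesteps the issue entirely: by applying Dynkin's formula to the fixed test function $\varphi$ (not $P^n_{t-s}\varphi$), one obtains the decomposition
\begin{equation*}
\langle\tilde\mu^n_s,\varphi\rangle=M_s+\langle\epsilon Z^n_0,\varphi\rangle+\int_0^s\langle\tilde\mu^n_r,\Delta^n\varphi\rangle\,dr,
\end{equation*}
where $M$ is a martingale. Then $\sup_s|M_s|$ is handled by Doob, the Poisson initial condition gives the $\langle\mu_0,\varphi\rangle+\langle\mu_0,\varphi\rangle^2$ terms, and the Lebesgue integral is estimated by Jensen plus the already-established first-moment bound applied pointwise in $r$ to $\Delta^n\varphi$, with no iteration at all. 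Replacing your $P^n_{t-s}\varphi$-based decomposition by this fixed-$\varphi$ Dynkin martingale repairs the argument.
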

\begin{proof}
    \textbf{1.} Consider first a non-negative function $\varphi \in C^{1+2\kappa}(\ZZ_n^2,e(l))$ for some $l\in\RR$, we have 
    \begin{equation}\label{equ.moment_est}
        \begin{aligned}
            \EE[\langle \mu_s^n,\varphi\rangle^{1+\theta}]\lesssim  1 + (1+\theta)\int_1^\infty r^{1+\theta}\int_0^{\frac{2}{r}}[\EE[e^{-\langle \mu_s^n,u\varphi\rangle}] - 1 + \EE[\langle \mu_s^n,u\varphi\rangle]]dudr,
        \end{aligned}
    \end{equation}
    by lemma \ref{lem.auxiliary}. We calculate 
    \begin{equation*}
    \begin{aligned}
        &\EE[e^{-\langle \mu_s^n,u\varphi\rangle}] - 1 + \EE[\langle \mu_s^n,u\varphi\rangle] \\
        \lesssim& u^{1+\beta}\langle \mu_0^n,T_s^n(\varphi)\rangle^{1+\beta} + e^{-\langle \mu_0^n,U_s^n(u\varphi)\rangle} - e^{-\langle \mu_0^n,T_s^n(u\varphi)\rangle}\\
        \lesssim& u^{1+\beta}\langle \mu_0^n,T_s^n(\varphi)\rangle^{1+\beta} +\langle \mu_0^n,|U_s^n(u\varphi) - T_s^n(u\varphi)|\rangle.
    \end{aligned}
    \end{equation*}
    From the representation 
    \begin{equation*}
        U_t^n(u\varphi) = T_t^n\left(\frac{1-e^{-\epsilon u\varphi}}{\epsilon}\right) - \int_0^t T_{t-s}^n\left(\frac{2\xi_+^n\epsilon^\beta}{1+\beta}(U_s^n(u\varphi))^{1+\beta}\right)ds \leq uT_t^n\varphi,
    \end{equation*}
    we obtain 
    \begin{equation*}
        \begin{aligned}
            &|U_s^n(u\varphi) - T_s^n(u\varphi)| \\
            \lesssim & \left|T_t^n\left(\frac{1-e^{-\epsilon u\varphi}-\epsilon u\varphi}{\epsilon}\right)\right| + \left|\int_0^t T_{t-s}^n\left(\frac{2\xi_+^n\epsilon^\beta}{1+\beta}(U_s^n(u\varphi))^{1+\beta}\right)ds\right|\\
            \lesssim& \epsilon^\beta u^{1+\beta}|T_t^n(\varphi^{1+\beta})| + u^{1+\beta}\left|\int_0^tT_{t-s}^n\left(\frac{2\xi_+^n\epsilon^\beta}{1+\beta}T_s^n(\varphi)^{1+\beta}\right)ds\right|.
        \end{aligned}
    \end{equation*}
    Let 
    \begin{equation*}
        \begin{aligned}
            I^n_t(\varphi):=& \sup_{0\leq s\leq t}\langle\mu_0^n,T_s^n(\varphi)\rangle^{1+\beta} + \langle\mu_0^n,\epsilon^\beta T_s^n(\varphi^{1+\beta})\rangle \\
            &+ \left\langle\mu_0^n,\left|\int_0^sT_{s-r}^n\left(\frac{2\xi_+^n\epsilon^\beta}{1+\beta}T_r^n(\varphi)^{1+\beta}\right)dr\right| \right\rangle.
        \end{aligned}
    \end{equation*}
    Then we have the estimation 
    \begin{equation*}
        \begin{aligned}
            \EE[\langle\mu_s^n,\varphi\rangle^{1+\theta}] \lesssim 1 + (1+\theta)\int_1^\infty r^{1+\theta}\int_0^{\frac{2}{r}}u^{1+\beta}I_s^n(\varphi) dudr \lesssim 1 + \frac{1+\theta}{\beta-\theta}I_s^n(\varphi).
        \end{aligned}
    \end{equation*}
    The solution theory of PAM gives the bounds for any $\varphi \in C^{1+2\kappa}(\ZZ_n^2,e(l))$,
    \[
        \|T_s^n(\varphi)\|_{C_TC^{1-\kappa}(\ZZ^2_n,e(l+\cdot))} \lesssim \|\varphi\|_{C^{1+2\kappa}(\ZZ_n^2,e(l))},
    \]
    and by lemma 3.10 in \cite{martin2019paracontrolled}
    \begin{equation*}
        \begin{aligned}
        &\left\|\int_0^sT^n_{s-r}\left(\frac{2\xi_+^n\epsilon^\beta}{1+\beta}T_r^n(\varphi)^{1+\beta}\right)dr\right\|_{C_TC^{1-\kappa}(\ZZ_n^2,e(\hat{l}+\cdot))} \\
        \lesssim&  \left\|\frac{2\xi_+^n\epsilon^\beta}{1+\beta}\right\|_{C^{-\kappa}(\ZZ_n^2,p(b))}\left\|T_t^n(\varphi)^{1+\beta}\right\|_{C^{1-\kappa}(\ZZ_n^2,e(\hat{l}+\cdot))},
        \end{aligned}
    \end{equation*}
    where we choose $\hat{l} = ((1+\beta)l)\vee l$. It then suggests the bounds 
    \[
        I_t^n(\varphi) \lesssim \langle\mu_0^n,e(\hat{l}+t)\|\varphi\|\rangle^{1+\beta} + \langle\mu_0^n, e(\hat{l}+t)\epsilon^\beta\|\varphi\|^{1+\beta}\rangle,
    \]
    where $\|\varphi\|$ indicates $\|\varphi\|_{C^{1+2\kappa}(\ZZ_n^2,e(l))}$.
    Thus we have 
    \[
        \EE[\langle\mu_t^n,\varphi\rangle^{1+\theta}] \lesssim 1 +(\langle\mu_0^n,e(\hat{l}+t)\rangle^{1+\beta} + \langle\mu_0^n,e(\hat{l}+t)\rangle)\|\varphi\|^{1+\beta},
    \]
    for all non-negative $\varphi \in C^{1+2\kappa}(\ZZ_n^2,e(l))$ or $\varphi \in \fD^{1-\kappa}(\ZZ_n^2,e(l))$. In particular, if we choose $\varphi = e^{l\rho(x)}$, where $\rho(x)$ is a slight modification of $|x|^\sigma$ so that $\varphi \in C^{1+2\kappa}(\ZZ_n^2,e(l))$, we know the quantity 
    \[
        \EE\left[\langle \mu^n_t,e(l)\rangle^{1+\theta}\right]
    \]
    is finite uniformly over $n$ for all $l\in \RR$. It is then easy to see that if $\|\varphi\|_{L^\infty(\ZZ_n^2,e(l))}$ is finite, then we must have 
    \[
        \EE\left[|\langle \mu^n_t,\varphi\rangle|^{1+\theta}\right] \leq 2\|\varphi\|_{L^\infty(\ZZ_n^2,e(l))}^{1+\theta}\EE[\langle\mu^n_t,e(l)\rangle^{1+\theta}] < \infty,
    \]
    uniformly over $n$.
    
    \textbf{2. } Now on the moment estimation of supreme, we have to use branching random walks $\tilde{\mu}_t^{n,i}$. Notice first that by the Laplace functional of $\tilde{\mu}_t^n$, we have the same results as the first step for $\mu_t^n$. Furthermore, by the proof of lemma \ref{lem.martingale}, we know that 
    \[
        M_t := \langle\tilde{\mu}_t^n,\varphi\rangle - \langle\epsilon Z_0^n,\varphi\rangle - \int_0^t \langle\tilde{\mu}_s^n,\Delta^n\varphi\rangle ds
    \]
    is a martingale if $\Delta^n\varphi \in L^\infty(\ZZ_n^2,e(l))$ for some $l\in\RR$. Thus by Doob's inequality, we can have the control 
    \[
        \EE\left[\sup_{0\leq s\leq t}\langle\tilde{\mu}_s^n,\varphi\rangle^{1+\theta}\right] \lesssim \EE[|M_t|^{1+\theta}] + \EE\left[\int_0^t|\langle\tilde{\mu}_s^n,\Delta^n\varphi\rangle|^{1+\theta}ds\right] + \langle\mu_0,\varphi\rangle + \langle\mu_0,\varphi\rangle^2.
    \]
    Then by the coupling lemma \ref{lem.coupling}, we know
    \[
        \EE\left[\sup_{0\leq s\leq t}\langle\mu_s^n,\varphi\rangle^{1+\theta}\right] \lesssim_{N(\beta)} \sum_{i=1}^{N(\beta)}\EE\left[\sup_{0\leq s\leq t}\langle\tilde{\mu}_s^{n,i},\varphi\rangle^{1+\theta}\right],
    \]
    which gives the desired result.
\end{proof}

Now we are able to show the convergence of the particle system to the super process.
\begin{proof}[Proof of theorem \ref{thm.existence_laplace}]
    Suppose $\varphi \in C^{2}(\RR^2,e(l))$ for some $l\in\RR$. Consider $Y_t^n = \langle\mu_t^n,\varphi\rangle$. To prove the tightness of $(\mu_t^n)_n$, we apply Jakubowski's criterion. By lemma \ref{lem.moment_est}, we know 
    \[
        \langle\mu_t^n, 1 + |\cdot|^2\rangle 
    \]
    is uniformly bounded over $n$. Since for any $K$, the set $\{\mu:\langle\mu,1+|\cdot|^2\rangle < K\}$ is compact, we know compact containment property is satisfied by $\mu_t^n$. 
    
    Secondly, we apply Aldous's criterion. We have to prove 
    \[
    Y^n_{\tau_n+\delta_n} - Y^n_{\tau_n}\rightarrow0
    \]
in distribution, where $\tau_n$ are stopping times and $\delta_n$ decreasing to $0$. 
The moment bound shows that $\{Y^n_{\tau_n+\delta_n},Y^n_{\tau_n}\}$ are tight. It then suffices to consider the difference of Laplace transform
\begin{equation*}
    \EE\left[e^{-sY^n_{\tau_n+\delta_n}-tY^n_{\tau_n}}\right] - \EE\left[e^{-(s+t)Y_{\tau_n}}\right].
\end{equation*}
Taking the conditional expectation with respect to $\mu_{\tau_n}^n$, we have to obtain the formula of $\EE[e^{-\langle\mu_{\tau_n+\delta_n}^n,\varphi\rangle}|\mu_{\tau_n}^n]$. By lemma \ref{lem.laplace_equation_formula} and lemma \ref{lem.laplace_equi}, we know 
\[
    \EE[e^{-\langle\mu_{\tau_n+\delta_n}^n,\varphi\rangle}|Z_{\tau_n}^n = \delta_x] = 1 -\epsilon U_{\delta_n}^n(\varphi)(x).
\]
We then obtain that 
\[
    \EE[e^{-\langle\mu_{\tau_n+\delta_n}^n,\varphi\rangle}|\mu_{\tau_n}^n] = \EE[e^{-\langle Z_{\tau_n+\delta_n}^n,\epsilon\varphi\rangle}|Z_{\tau_n}^n] = e^{-\langle Z_{\tau_n}^n,-\log(1-\epsilon U_{\delta_n}^n(\varphi))\rangle}.
\]
Thus we have 
\begin{equation*}
    \begin{aligned}
    &\EE\left[e^{-sY^n_{\tau_n+\delta_n}-tY^n_{\tau_n}}\right] - \EE\left[e^{-(s+t)Y_{\tau_n}}\right] \\
    =& \quad\EE\left[e^{-\langle \mu_{\tau_n}^n,-\frac{1}{\epsilon}\log(1-\epsilon U^n_{\delta_n}(s\varphi)) - \langle\mu_{\tau_n}^n,t\varphi\rangle\rangle}\right] - \EE\left[e^{-\langle \mu_{\tau_n}^n,(t+s)\varphi\rangle}\right]\\
    \lesssim & \quad \EE\left[\left\langle\mu_{\tau_n}^n,\left|-\frac{1}{\epsilon}\log(1-\epsilon U^n_{\delta_n}(s\varphi)) - s\varphi\right|\right\rangle\right]\\
    \lesssim &\quad\EE\left[\sup_{0\leq s\leq t}\langle\mu_{s}^n,1\rangle\right]\left\|-\frac{1}{\epsilon}\log(1-\epsilon U^n_{\delta_n}(s\varphi)) - s\varphi\right\|_{L^\infty(\ZZ_n^2)}\\
    \lesssim &\quad \left\|-\frac{1}{\epsilon}\log(1-\epsilon U^n_{\delta_n}(s\varphi)) - U_{\delta_n}^n(s\varphi)\right\|_{L^{\infty}(\ZZ_n^2)} + \|U_{\delta_n}^n(s\varphi)  - s\varphi\|_{L^\infty(\ZZ_n^2)},
    \end{aligned} 
\end{equation*}
which goes to zero as $n\rightarrow\infty$ by estimation of SPDEs and lemma \ref{lem.auxiliary}. This tells us that we have the limit 
\[
    \left(Y_{\tau_n+\delta_n}^n , Y_{\tau_n}^n\right)\rightarrow (Y_\infty,Y_\infty)
\]
in distribution. Thus any linear combination of the above vectors also converges in distribution. In particular we have 
\[
    Y_{\tau_n+\delta_n}^n - Y_{\tau_n}^n \rightarrow 0
\]
in distribution. Thus we obtain the tightness of process $Y_t^n$. Since the family $C_c^\infty(\RR^2)$ separates points in $\fM_F(\RR^2)$, the Jakubowski's criterion gives the tightness of $\mu^n$ in $\DD([0,\infty),\fM_F(\RR^2))$. The discussion of SPDEs in section \ref{sec.variant} and the tightness of $Y_t$ tell us the limit process $\mu_t$ has the Laplace functional as stated in the theorem. For the $\rho < \beta$, the limit $\mu_t$ satisfies the Laplace functional 
\[
    \EE[e^{-\langle\mu_t,\varphi\rangle}] = e^{-\langle\mu_0,T_t(\varphi)\rangle} = e^{-\langle T_t\mu_0,\varphi\rangle},
\]
which implies that $\mu_t = T_t\mu_0$.
\end{proof}
\subsection{For the general $\varkappa >\frac{2}{1+\beta}$}
We do the mixing as in the paper \cite{perkowski2021rough}. 

\begin{definition}
    Fix a compact supported measure $\mu^n_0$ on $\ZZ_n^2$. Let $\epsilon = n^{-\frac{1}{\beta}}$ and $c > 0$. We define a $E^n-$valued stochastic process $Z^n(t)$ started at a Poisson random measure on $\ZZ_n^2$ with intensity $\frac{\mu^n_0}{\epsilon}$ with infinitesimal generator 
    \begin{equation}
        \fL^n F(\eta) = \sum_{x\in\ZZ_n^2} \eta_x\left[\Delta_x^n F(\eta) + |\xi^n|\sum_{k=0}^\infty p_kq_k(x)d_x^k F(\eta) + c|\xi^n|\sum_{k=0}^\infty p_kd_x^k F(\eta)\right]
    \end{equation}
    for any $F \in \fD(\fL^n)$ which consists of all $F\in C_b(E^n)$ such that the right-hand side of \eqref{equ.generator} is finite. Finally, set $\mu^n(t) = \epsilon Z^n(t)$ for all $t\in[0,\infty)$. 
\end{definition}
It follows that the branching rate of the above branching random walk is 
\[
    (1+c)|\xi^n|
\]
and the branching mechanism is given by 
\[
    \frac{g^n + cg}{1+c}
\]
Then this is clear that we can still find some controlled branching mechanism and perform the exact same procedure as for the case $\varkappa < \frac{2}{1+\beta}$. The final parameter of the $\beta-$rough super Brownian motion given by this mixture will be 
\[
    \varkappa = \frac{2\nu}{1+\beta} + \frac{2\nu c}{1+\beta}.
\]
This finishes the proof of theorem \ref{thm.existence_RSBM}.

\begin{remark}
    For $\alpha > \frac{1}{2}$, we can replace the Laplace with the generator of $\alpha-$stable process and obtain the convergence and existence of $(\alpha,d,\beta)-$super process.
\end{remark}

\section{Martingale problem}\label{sec.martingale_problem}
This section is devoted to proving the equivalence of three definitions of $\beta-$rough super Brownian motion. 

Let's start the first direction: from the Laplace functional to the martingale problem. We follow the method in book \cite{dawson1993measure}. Let's begin with a lemma in \cite{ethier2009markov} (corollary 3.3 in chapter 2)
\begin{lem}\label{lem.martingale_trans}
    Let $X,Y$ be real-valued, adapted right continuous processes. Suppose that for each $t$, $\inf_{s\leq t} X_s > 0$. Then 
    \[
        X_t - \int_0^t Y_sds
    \]
    is a local martingale if and only if 
    \[
        X_t exp\left(-\int_0^t \frac{Y_s}{X_s}ds\right)
    \]  
    is a local martingale.
\end{lem}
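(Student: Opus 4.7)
The plan is to exploit that the finite-variation factor $A_t := \exp\bigl(-\int_0^t Y_s/X_s\,ds\bigr)$ is continuous, so that integration by parts between $X_t$ and $A_t$ produces no quadratic covariation term. Set $M_t := X_t - \int_0^t Y_s\,ds$ and $N_t := X_t A_t$. From $\inf_{s\leq t} X_s > 0$ together with right continuity of $X$ and $Y$, the ratio $Y_s/X_s$ is locally bounded on each compact time interval, hence so are $A$ and its reciprocal $B_t := 1/A_t = \exp\bigl(\int_0^t Y_s/X_s\,ds\bigr)$, and both are adapted continuous processes of finite variation.

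For the forward implication, suppose $M$ is a local martingale. Then $X_t = M_t + \int_0^t Y_s\,ds$ is a semimartingale, and the product rule together with $[X,A]=0$ gives
\[
N_t - N_0 = \int_0^t A_s\,dX_s - \int_0^t A_s Y_s\,ds = \int_0^t A_s\,dM_s,
\]
which is a local martingale because $A$ is adapted and locally bounded. The converse is symmetric: assuming $N$ is a local martingale, one writes $X_t = N_t B_t$ and applies the product rule with $[N,B]=0$ to obtain
\[
X_t - X_0 = \int_0^t B_s\,dN_s + \int_0^t N_s B_s\,\frac{Y_s}{X_s}\,ds = \int_0^t B_s\,dN_s + \int_0^t Y_s\,ds,
\]
so that $M_t - M_0 = \int_0^t B_s\,dN_s$ is again a local martingale.

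The only real technical point is the localisation needed to treat $A$ and $B$ as locally bounded predictable integrands and to justify that $\int A\,dM$ and $\int B\,dN$ are genuine local martingales. This is handled by the stopping times $\tau_k := \inf\{t\geq 0 : X_t \leq 1/k \text{ or } X_t \geq k \text{ or } |M_t|\vee|N_t|\geq k\}$, on which the integrands are bounded; the hypothesis $\inf_{s\leq t} X_s>0$ ensures $\tau_k \to \infty$, so the desired (local) martingale property transfers between the two processes. No obstacle beyond this routine bookkeeping of the integration by parts is expected.
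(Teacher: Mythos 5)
Your proof is correct. The paper does not give its own argument for this lemma; it simply cites Corollary~3.3 of Chapter~2 in Ethier--Kurtz, so there is no in-text proof to compare against. Your argument via the product rule, using that $A_t=\exp(-\int_0^t Y_s/X_s\,ds)$ is continuous and of finite variation so that $[X,A]=0$, is the standard way to establish this equivalence, and the computation in both directions is right (the silent replacement of $X_{s-}$ by $X_s$ inside $\int X_{s-}\,dA_s$ is harmless since $dA_s$ is absolutely continuous). The localization you describe is the routine part and works as stated. The one small caveat worth flagging is that local boundedness of $Y_s/X_s$ does not follow from right-continuity of $Y$ alone; one needs the paths of $Y$ to be locally bounded (which holds, e.g., if $Y$ is càdlàg), or at least locally integrable so that $A$ is well defined. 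This is an implicit hypothesis of the lemma rather than a gap in your argument.
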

\begin{lem}
    Suppose $\mu$ satisfies the first definition in definition \ref{def.beta_RSBM}, then for all non-negative $\phi \in \fD_\fH$, the process
    \[
    N_t(\phi):=e^{-\langle\mu_t,\phi\rangle + \int_0^t\langle\mu_s,\fH\phi - \varkappa\phi^{1+\beta}\rangle ds},
\]
is a local $\fF-$ martingale.
\end{lem}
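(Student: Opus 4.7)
The plan is to convert Definition \ref{def.beta_RSBM}(1) into a semigroup identity for $\mu$, differentiate it in time to obtain a Dynkin-type additive-drift local martingale, and finally invoke Lemma \ref{lem.martingale_trans} to turn that into the required exponential form $N_t(\phi)$. First, evaluating the martingale $N_t^\varphi(\cdot)$ at times $r \le t$ and $t$ and using $U_0(\varphi)=\varphi$ gives, for every non-negative $\varphi \in C_c^\infty(\RR^2)$,
\[
\EE\bigl[e^{-\langle\mu_t,\varphi\rangle}\bigm|\fF_r\bigr]=e^{-\langle\mu_r,U_{t-r}(\varphi)\rangle}.
\]
A general non-negative $\phi \in \fD_\fH$ lies in some $C^{1+2\kappa}(\RR^2,e(l))$; I approximate it by a monotone sequence $\phi_m \in C_c^\infty$ converging to $\phi$ in $C^{1+2\kappa}(\RR^2,e(l+\varepsilon))$. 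The continuity of $\varphi \mapsto U_{t-r}(\varphi)$ established in Section \ref{sec.variant}, together with the weighted moment bound $\EE[\langle\mu_s,e(l)\rangle^{1+\theta}]<\infty$ (obtained by transposing the argument of Lemma \ref{lem.moment_est} to the continuum, as indicated in the remark following Definition \ref{def.beta_RSBM}), lets me pass to the limit and extend the identity to all non-negative $\phi \in \fD_\fH$.

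Second, I rewrite the mild form
\[
U_h(\phi)=\phi+\int_0^h\bigl[\fH U_s(\phi)-\varkappa U_s(\phi)^{1+\beta}\bigr]ds
\]
and use the continuity $U_s(\phi)\to\phi$ as $s\downarrow 0$ in an appropriate weighted space to obtain the infinitesimal relation
\[
\lim_{h\downarrow 0}\frac1h\Bigl(\EE\bigl[e^{-\langle\mu_{r+h},\phi\rangle}\bigm|\fF_r\bigr]-e^{-\langle\mu_r,\phi\rangle}\Bigr)=-e^{-\langle\mu_r,\phi\rangle}\langle\mu_r,\fH\phi-\varkappa\phi^{1+\beta}\rangle.
\]
Combined with the Markov property of $\mu$, which is itself a consequence of the extended semigroup identity, this is the standard generator identification for the test function $F(\mu)=e^{-\langle\mu,\phi\rangle}$, and Dynkin's formula then yields that
\[
M_r:=e^{-\langle\mu_r,\phi\rangle}+\int_0^r e^{-\langle\mu_s,\phi\rangle}\langle\mu_s,\fH\phi-\varkappa\phi^{1+\beta}\rangle\,ds
\]
is a local $\fF$-martingale.

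Third, set $X_r=e^{-\langle\mu_r,\phi\rangle}$ and $Y_r=-e^{-\langle\mu_r,\phi\rangle}\langle\mu_r,\fH\phi-\varkappa\phi^{1+\beta}\rangle$, so that $X_r-\int_0^r Y_s\,ds=M_r$ is the local martingale just produced while $Y_s/X_s=-\langle\mu_s,\fH\phi-\varkappa\phi^{1+\beta}\rangle$. Because $\phi\ge 0$, one has $X_r\in(0,1]$, and localizing by $\tau_K=\inf\{s:\langle\mu_s,\phi\rangle>K\}$ gives $\inf_{s\le\tau_K}X_s\ge e^{-K}$, the positivity hypothesis of Lemma \ref{lem.martingale_trans}. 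The lemma then says that
\[
X_r\exp\!\Bigl(-\int_0^r\tfrac{Y_s}{X_s}\,ds\Bigr)=\exp\Bigl(-\langle\mu_r,\phi\rangle+\int_0^r\langle\mu_s,\fH\phi-\varkappa\phi^{1+\beta}\rangle\,ds\Bigr)=N_r(\phi)
\]
is a local martingale on $[0,\tau_K]$, and sending $K\to\infty$ concludes. The main obstacle will be the extension of the semigroup identity to non-compactly-supported $\phi\in\fD_\fH$: Definition \ref{def.beta_RSBM}(1) is stated only for $C_c^\infty$ test functions, while $\fD_\fH$ contains functions with prescribed weighted growth, so the interchange of limit and expectation rests both on the continuity of the nonlinear solution map in weighted H\"older spaces from Section \ref{sec.variant} and on a continuum analogue of the moment bound of Lemma \ref{lem.moment_est}; once these are in place the generator computation and the application of Lemma \ref{lem.martingale_trans} follow the classical Dawson--Watanabe template and are essentially routine.
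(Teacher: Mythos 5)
Your proposal follows essentially the same route as the paper: both derive the transition identity $\EE[e^{-\langle\mu_{t+\epsilon},\phi\rangle}|\fF_t]=e^{-\langle\mu_t,U_\epsilon(\phi)\rangle}$ from Definition~\ref{def.beta_RSBM}(1), differentiate in $\epsilon$ at $0$ using the representation $\phi=\int_0^tT_s\varphi\,ds$ from the definition of $\fD_\fH$ together with the mild form of $U_\epsilon$ to justify dominated convergence, and then invoke Lemma~\ref{lem.martingale_trans} to pass from the additive local martingale to the exponential form $N_t(\phi)$. Your flagging of the extension from $C_c^\infty$ to $\fD_\fH$ is a reasonable concern that the paper handles somewhat implicitly; the one step you state slightly loosely is the passage from the pointwise derivative of the conditional expectation to the local martingale property, which the paper does by integrating against $\1_B$ for $B\in\fF_s$, but this is a routine repair.
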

\begin{proof}
    Taking $B \in \fF_s$, we check 
\[
    \EE\left[(e^{-\langle\mu_t,\phi\rangle}-e^{-\langle\mu_s,\phi\rangle})\1_B \right] = \EE\left[ -\int_s^t\langle\mu_r,\fH\phi - \varkappa\phi^{1+\beta}\rangle e^{-\langle\mu_r,\phi\rangle}dr\1_B\right].
\]
It is sufficient to prove that for $t\geq s$,
\[
    \frac{d}{dt}\EE\left[e^{-\langle\mu_t,\phi\rangle}\1_B \right] = \EE\left[-\langle\mu_t,\fH\phi - \varkappa\phi^{1+\beta}\rangle e^{-\langle\mu_t,\phi\rangle}\1_B\right].
\]
By the definition, we have 
\begin{equation}
    \begin{aligned}
    \frac{d}{dt}\EE\left[e^{-\langle\mu_t,\phi\rangle}\1_B \right] &= \lim_{\epsilon\rightarrow 0}\frac{1}{\epsilon}\EE\left[\left(e^{-\langle\mu_{t+\epsilon},\phi\rangle} - e^{-\langle\mu_t,\phi\rangle}\right)\1_B\right]\\
    &= \lim_{\epsilon\rightarrow0}\frac{1}{\epsilon}\EE\left[\1_B e^{-\langle\mu_t,\phi\rangle}\left(e^{-\langle\mu_t,\epsilon\frac{U_\epsilon(\phi) -\phi}{\epsilon}\rangle}-1\right)\right].
    \end{aligned}
\end{equation}
Now, in order to apply the dominant convergence theorem, we have to verify 
\[
    \left\|\frac{U_{\epsilon}(\phi) - \phi}{\epsilon}\right\|_{L^\infty(\RR^2,e(l))}
\]
is uniformly bounded for some $l\in\RR$. By definition of $\fD_\fH$, there exists $\varphi \in C^{1+2\kappa}(\RR^2,e(l_0))$ such that $\phi = \int_0^t T_s\varphi ds$. Then 
\[
    U_t(\phi) = T_t(\phi) - \varkappa\int_0^t T_{t-s}(U_s(\phi))^{1+\beta}ds,
\]
which implies that
\begin{equation}
    \begin{aligned}
    \frac{1}{\epsilon}(U_\epsilon(\phi) - \phi) &= \frac{1}{\epsilon}\left(T_\epsilon(\phi)-\phi - \varkappa\int_0^\epsilon T_{\epsilon-s}(v_s(\phi))^{1+\beta}\right) \\
    &= \frac{1}{\epsilon}\left(\int_t^{t+\epsilon} T_s\varphi ds - \int_0^\epsilon T_s\varphi ds -\varkappa\int_0^\epsilon T_{\epsilon-s}(U_s(\phi))^{1+\beta}ds \right).
    \end{aligned}
\end{equation}
Thus we have 
\[
    \frac{1}{\epsilon}\|U_\epsilon(\phi) - \phi\|_{L^\infty(\RR^2,e(l))} \lesssim 1 + \|\varphi\|^{1+\beta}_{C^{1+2\kappa}(\RR^2,e(l_0))},
\]
for $l > 1 + ((1+\beta)l_0)\vee l_0\vee (l_0 + t)$ uniformly over $\epsilon$. In addition, we have by the continuity of $T_{\cdot}\varphi, U_{\cdot}\varphi$ that 
\[
    \lim_{\epsilon\rightarrow 0} \frac{1}{\epsilon}(U_\epsilon(\phi) - \phi) = T_t\varphi - \varphi - \varkappa\phi^{1+\beta} = \fH\phi - \varkappa\phi^{1+\beta}.
\]
Thus dominant convergence theorem applies and we know 
\[
    e^{-\langle\mu_t,\phi\rangle} + \int_0^t \langle\mu_r,\fH\phi-\varkappa\phi^{1+\beta}\rangle e^{-\langle\mu_r,\phi\rangle}dr
\]
is a local martingale. Lemma \ref{lem.martingale_trans} then shows that $N_t$ is a local martingale. 
\end{proof}
We are now ready to prove the theorem \ref{thm.equivalent_def}
\begin{proof}[Proof of theorem \ref{thm.equivalent_def}]
 \textbf{1.$\Rightarrow$ 3.} We first notice that since the Laplace functional uniquely determines the finite-dimensional law of the stochastic process, the $1+\theta$ moments estimation of $\sup_{0\leq s\leq t}\langle\mu_s,\phi\rangle$ for any $\phi\in L^\infty(\RR^2,e(l)),l\in\RR$ and $0 \leq \theta < \beta$ then follows by the first definition. We therefore have the $1+\theta$ boundedness of $L_t^\phi$ for any $\phi\in\fD_\fH$. 
 
 By repeating the argument in \cite{dawson1993measure}, we can obtain the martingale problem of $\mu$ from definition 1. Since the argument is quite long, we give only a sketch here and refer interested readers to chapter 6.1 in \cite{dawson1993measure}. First we choose a non-negative function $\phi\in\fD_\fH$. Let 
    
\[
    Y_t(\phi) = e^{-\langle\mu_t,\phi\rangle},\qquad H_t(\phi) = e^{-\int_0^t\langle\mu_s,\fH\phi-\varkappa\phi^{1+\beta}\rangle ds},
\]
then $Y_t(\phi) = H_t(\phi)N_t(\phi)$ is a strict positive semi-martingale. The It\^o's formula gives the representation 
\begin{equation}\label{equ.representation1}
    \begin{aligned}
    dY_t(\phi) &= H_t(\phi)dN_t(\phi) + N_{t-}(\phi)dH_t(\phi) \\
    &= H_t(\phi)dN_t(\phi) - Y_{t-}(\phi)\langle\mu_t,\fH\phi-\varkappa\phi^{1+\beta}\rangle dt
    \end{aligned}.
\end{equation}

Again by It\^o's formula, we know the process $\langle\mu_t,\phi\rangle = -\log Y_t(\phi)$ is again a semi-martingale. Now let $\fN(ds,d\nu)$ be adapted random point measure on $\RR_+\times \fM_F(\RR^2)$ given by $\sum \delta_{(s,\mu_s - \mu_{s-})}$, $\widehat{\fN}$ denotes its compensator and $\widetilde{\fN}$ the corresponding martingale measure. See appendix \ref{app.random} for more detail on random point measure.

The unique canonical decomposition of semi-martingale (lemma \ref{thm.semi_decomposition}) gives that 
\[
    \langle\mu_t,\phi\rangle = \langle\mu_0,\phi\rangle + B'_t(\phi) + M_t^C(\phi) + \widetilde{\fN}_t(\phi) + \fN_t(\phi),
\]
where $B_t'(\phi)$ is predictable, continuous and locally of bounded variation, $M_t^C(\phi)$ is a continuous local martingale with predictable increasing process $C_t(\phi)$ and 
\begin{equation*}
    \widetilde{\fN}_t(\phi) :=\int_0^t\int_{\fM_F^{\pm}(\RR^2)}\1_{|\langle\nu,\phi\rangle|\leq \|\phi\|}\langle\nu,\phi\rangle\widetilde{\fN}(ds,d\nu),
\end{equation*}
\begin{equation*}
    \fN_t(\phi) :=\int_0^t\int_{\fM_F^{\pm}(\RR^2)}\1_{|\langle\nu,\phi\rangle|> \|\phi\|}\langle\nu,\phi\rangle\fN(ds,d\nu).
\end{equation*}
The uniqueness of decomposition gives the relation
\[
    B'_t(\theta\phi) = \theta B'_t(\phi), \quad M_t^C(\theta\phi) = \theta M_t^C(\phi),\quad C_t(\theta\phi) = \theta^2C_t(\phi),
\]
for any $\theta > 0$. Applying It\^o's lemma \ref{lem.Ito} to $Y_t$ and obtain the second representation
\begin{equation}\label{equ.representation2}
\begin{aligned}
    dY_t(\phi) =& Y_{t-}(\phi)\left(-dB_t(\phi) + \frac{1}{2}dC_t(\phi) + \int_{\fM_F^{\pm}(\RR^2)}(e^{-\langle\nu,\phi\rangle} - 1+ \langle\nu,\phi\rangle)\widehat{\fN}(dt,d\nu)\right)\\
    &+ d(\text{local martingale}),
\end{aligned}
\end{equation}
where 
\[
    B_t(\phi) = B'_t(\phi) + \int_{\fM_F^{\pm}(\RR^2)}\langle\nu,\phi\rangle \1_{|\langle\nu,\phi\rangle|>\|\phi\|}\widehat{\fN}(dt,d\nu).
\]
Now since $Y_t$ is a special semi-martingale, the unique decomposition lemma \ref{lem.sp_decomposition} can be used to identify the predictable locally bounded variation part of two representations \eqref{equ.representation1} and \eqref{equ.representation2}, we have 
\[
    \langle\mu_t,\fH\phi - \varkappa\phi^{1+\beta}\rangle dt = dB_t(\phi) - \frac{1}{2}dC_t(\phi) - \int_{\fM_F^{\pm}(\RR^2)} (e^{-\langle\nu,\phi\rangle} - 1+ \langle\nu,\phi\rangle)\widehat{\fN}(dt,d\nu).
\]
Replacing $\phi$ with $\theta\phi$ and comparing coefficient before different power of $\theta$, we obtain $\int_0^t\langle\mu_s,\fH\phi\rangle ds= B_t(\phi), C_t(\phi) = 0$ and 
\begin{equation*}
    \begin{aligned}
        &\int_{\fM_F^{\pm}(\RR^2)} (e^{-\langle\nu,\phi\rangle} - 1+ \langle\nu,\phi\rangle)\widehat{\fN}(dt,d\nu) \\
        =& \langle\mu_t,\varkappa\phi^{1+\beta}\rangle dt = \int_0^\infty\frac{\varkappa\beta(1+\beta)}{\Gamma(1-\beta)u^{\beta+2}}\langle\mu_t,e^{-u\phi} - 1 + u\phi\rangle du,
    \end{aligned}
\end{equation*}
where $\Gamma(1-\beta)$ is the Gamma function. This shows that 
\begin{equation*}
    \widehat{\fN}(dt,d\nu) = \frac{\varkappa\beta(1+\beta)}{\Gamma(1-\beta)u^{\beta+2}}  dtdu\mu_t(dx)\delta_{u\delta_x}(d\nu).
\end{equation*}
Now for the general $\phi\in\fD_\fH$, we can decompose $\phi = \phi_+ - \phi_-$ and see that $\langle\mu_t,\phi\rangle$ is again a semi-martingale. Then by It\^o's formula, we have 
\begin{equation}
    \begin{aligned}
        \MM_t^f :=& f(\langle{\mu_t,\phi}\rangle) - f(\langle\mu_0,\phi\rangle) - \int_0^t f'(\langle\mu_s,\phi\rangle)\langle\mu_s,\fH\phi\rangle ds
        -\frac{\varkappa\beta(1+\beta)}{\Gamma(1-\beta)}\int_0^t\\
        &\left\langle\mu_t,\int_0^\infty\frac{1}{u^{\beta+2}}(f(\langle\mu_t,\phi\rangle + u\phi(\cdot)) - f(\langle\mu_t,\phi\rangle) - f'(\langle\mu_t,\phi\rangle)u\phi(\cdot))du\right\rangle dt
    \end{aligned}
\end{equation}
is a purely discontinuous local martingale for all $\phi \in \fD_\fH$. Now by comparing to the formula given by the compensator of $\langle\mu_t,\phi\rangle$ on $\RR$, we obtain the result.

\textbf{3.$\Rightarrow$ 2.} 
The proof is very similar to the proof in \cite{perkowski2021rough}. However, we have to be careful with non-square-integrable martingale. To start with, fix an $l<0$, from lemma \ref{lem.lower_bounds}, we know there exists  a non-negative function $\tilde{\phi} \in \fD_\fH$ such that $\frac{\tilde{\phi}}{e(l)}$ is strict larger than a small positive number. We choose a stopping time $T_n$ according to this $\langle\mu_s,\tilde{\phi}\rangle$ describing the $n-$th jump times with jump bigger than 1. Then the formula 
\[
    \EE\left[\sum_{0\leq r\leq T_n}1_{\langle\mu_r-\mu_{r-},\tilde{\phi}\rangle > 1}\right] = \EE\left[\int_0^{T_n} \int_0^\infty \1_{u>1}\langle\mu_r,\varkappa\phi^{1+\beta}\rangle\frac{\beta(1+\beta)}{\Gamma(1-\beta)u^{\beta+2}}dudr\right]
\]
gives that 
\[
    \EE\left[\int_0^{T_n}\langle\mu_r,\varkappa\tilde{\phi}^{1+\beta}\rangle dr\right] < \infty,
\]
which implies that $\EE\left[\int_0^{T_n}\langle\mu_r,e(l)\rangle\right] < \infty$ by the assumption of $\tilde{\phi}$. Then by the moments bounds of $L_t^{\tilde{\phi}}$ and doob's maximal inequality, we know that we actually have 
\begin{equation}\label{equ.supre_first_moment}
    \EE\left[\sup_{0\leq r\leq T_n}\langle\mu_{r},e(l)\rangle\right] < \infty.
\end{equation}
We will assume we have $\EE\left[\sup_{0\leq r\leq s}\langle\mu_{r},e(l)\rangle\right] < \infty$ and derive the local martingale property of 2. first.

Now consider a piecewise function $f:[0,t]\rightarrow\fD_\fH$ with weight $e(l-t+\cdot)$and $\varphi_0\in\fD_\fH$ with weight $e(l-t)$. Let $\varphi_t$ be the solution to the equation 
\[
    \partial_s\varphi_t(s) + \fH\varphi_t(s) = f(s), \quad s\in[0,t],\quad \varphi_t(t) = \varphi_0,
\]
which is given by $\varphi_t(s) = T_{t-s}\varphi_0 + \int_s^tT_{r-s}f(r)dr$. By assumption we have $\varphi_t(s) \in \fD_\fH$ for all $s\leq t$. For two time step $t_1 < t_2$ and a partition $\pi$ of $[t_1,t_2]$, we have 
\begin{equation}\label{equ.martingale_in}
    \begin{aligned}
        &\EE\left[\langle\mu_{t_2},\varphi_t(t_2) - \langle{\mu_{t_1},\varphi_{t}(t_1)}\rangle|\fF_{t_1}\right] \\
        =& \EE\left[\int_{t_1}^{t_2}\langle\mu_{s'},f(s)-\fH\varphi_t(s)\rangle+\langle \mu_s,\fH\varphi_t(s'')\rangle ds|\fF_{t_1}\right],
    \end{aligned}
\end{equation}
where $s'$ is the smallest partition point in $\pi$ bigger than $s$ and where $s''$ is the largest partition point in $\pi$ s maller than $s$. The right continuous property of $\mu_t$ in $\fM_F(\RR^2)$ shows that $\lim_{s'\rightarrow s ,s' >s}\langle\mu_{s'},(f(s) - \fH\varphi_t(s))\rangle = \langle\mu_{s},(f(s) - \fH\varphi_t(s))\rangle$, together with \eqref{equ.supre_first_moment}, we can send the mesh of partition of $[t_1,t_2]$ go to zero to get the limit of the first term of \eqref{equ.martingale_in}
\[
    \EE\left[\int_{t_1}^{t_2}\langle\mu_s,f(s)-\fH\varphi_t(s)\rangle ds|\fF_{t_1}\right].
\]
It remains to check 
\[
    \lim_{s'' \rightarrow s, s'' < s}\EE[\langle\mu_s,\fH\varphi_t(s'') - \fH\varphi_t(s)\rangle|\fF_{t_1}] \rightarrow 0.
\]
By the definition of $\varphi_t(s)$, we have 
\begin{equation*}
    \begin{aligned}
        &|\fH\varphi_t(s'') - \fH\varphi_t(s)| \\
        =& \left|T_{t-s''}\fH\varphi_0 - T_{t-s}\fH\varphi_0 + \int_s^tT_{r-s''}f(r)-T_{r-s}f(r)dr + \int_{s''}^sT_{r-s''}f(r)dr\right|\\
        &\lesssim (s-s'')^{\frac{1-\kappa}{2}}e(l).
    \end{aligned}
\end{equation*}
This gives the result and shows that $\MM_t^{\varphi_0,f}$ is a local martingale. And the density of $\fD_\fH$ gives the result. We will see it is actually a martingale when we show the direction 2.$\rightarrow$ 1. and 1. implies the moments estimation of $\langle\mu_t,\phi\rangle$.

From the compensator of $L_t^\phi$, we can obtain the compensator of the random point measure $\fN$ defined in the first step. Now since the purely discontinuous martingale is determined by its jump part and the jump of $\MM_t^{\varphi_0,f}$ is given by $\sum_{0\leq s\leq t} \langle\mu_s-\mu_{s-},\varphi_t(s)\rangle$. Then from the compensator of $\fN$, we know the compensator of $\MM_t^{\varphi_0,f}$ should be given by 
        \[
    \langle\mu_s,\varkappa\varphi_t(s)^{1+\beta}\rangle\frac{\beta(1+\beta)}{\Gamma(1-\beta)x^{\beta+2}}\1_{x>0}dsdx
\]

\textbf{2.$\Rightarrow$ 1.} We apply It\^o's formula to the semi-martingale $\langle\mu_s,\varphi_t(s)\rangle$ and obtain 
\begin{equation*}
\begin{aligned}
    &f(\langle\mu_s,\varphi_t(s)\rangle) \\
    =& f(\langle\mu_0,\varphi_t(0)) + \int_0^sf'(\langle\mu_r,\varphi_t(r)\rangle)\langle\mu_r,f(r)\rangle dr +\int_0^s\int_0^\infty \langle\mu_r,\varkappa\varphi_t(s)^{1+\beta}\rangle\\
    & \frac{\beta(1+\beta)}{\Gamma(1-\beta)u^{\beta+2}}(f(\langle\mu_{r-},\varphi_t(r)\rangle+u) - f(\langle\mu_{r-},\varphi_t(r)\rangle) - f'(\langle\mu_{r-},\varphi_t(r)\rangle)u)dudr\\
    &+ \text{local martingale}.
    \end{aligned}
\end{equation*}
Choose $f(x) = e^{-x}$ and $f(r) = -\varkappa (U_{t-s}(\varphi_0))^{1+\beta}$, then $\varphi_t(s) = U_{t-s}\varphi_0$ and 
\[
    e^{-\langle\mu_s,U_{t-s}\varphi_0\rangle} - e^{-\langle\mu_0,U_t\varphi_0\rangle}
\]
is a bounded local martingale. Thus it is a martingale, which gives the result.
\end{proof}
\section{Properties of RSBM}
In this section, we discuss the compact support property and the super exponentially persistence of the $\beta-$super Brownian motion. The compact support property for the rough super Brownian motion with $\beta = 1$ has been discussed in \cite{jin2023compact} and here we will give a sketch of how the method in \cite{jin2023compact} can be applied for the $0 < \beta < 1$. Apart from the assumption \ref{ass.environment}, we need as well the bounds
\[
    \sup_{x\in \RR^2} p(a)(x)^{-1}\int_{\RR^2}[(\fI(\fE^n\xi^n)(y) - \fI(\fE^n\xi^n)(x))(\fE^n\xi^n)(x) - c_n]\Psi^\delta(y-x)dy \lesssim \delta^{1-\kappa'}
\]
for any $a > 0,0<\delta<1$ with $\Psi^\delta$ defined in \cite{jin2023compact}.

Let $U_t^\phi\varphi$ be the solution to the equation 
    \begin{equation}\label{equ.inhomo_nonlinear_PAM_RSBM}
            \left\{
        \begin{array}{ll}
            \partial_t w_t = \fH w_t -  w_t^{1+\beta} + \phi, & \text{ in }(0,\infty]\times\RR^2,\\
            w_0 = \varphi, & \text{ on }\RR^2.
        \end{array}
    \right.
    \end{equation}
Then we have 
\begin{lem}
    For any function $\varphi\in C^{1+2\kappa}(\RR^2,e(l))$ and $\phi \in C^{-1+2\kappa}(\RR^2,e(l))$ for some $l\in\RR$, the process
    \[
        N_t(s) = e^{-\langle\mu_s,U_{t-s}^\phi\varphi\rangle - \int_0^s\langle\mu_r,\phi\rangle dr}
    \]
    is a $\fF-$martingale. In particular,
    \[
        \EE\left[e^{-\int_0^t \langle\mu_r,\phi\rangle dr}\right] = e^{-\langle\mu_0,U_t^\phi0\rangle}.
    \]
\end{lem}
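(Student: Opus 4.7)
The strategy is to parallel the proof of direction $2.\Rightarrow 1.$ in Theorem \ref{thm.equivalent_def}, now absorbing the inhomogeneous source $\phi$ via Ito's formula. Set $\psi(s) := U^\phi_{t-s}\varphi$; by the solution theory of Section \ref{sec.variant} applied to the inhomogeneous equation \eqref{equ.inhomo_nonlinear_PAM_RSBM}, $\psi$ lies in $\fL^{1-\kappa}$ with a suitable exponential weight and solves the backward equation
\[
\partial_s\psi(s) + \fH\psi(s) \;=\; \psi(s)^{1+\beta} - \phi, \qquad \psi(t)=\varphi.
\]
Take $f(s):=\psi(s)^{1+\beta}-\phi$ and $\varphi_0=\varphi$. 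The power $\psi(s)^{1+\beta}$ inherits the relevant H\"older regularity (with weight of order $e((1+\beta)l+\cdot)$) because $\psi$ is bounded in the exponentially weighted space, so $f$ is admissible in Definition \ref{def.beta_RSBM}.2 and consequently
\[
\MM(s) \;:=\; \langle\mu_s,\psi(s)\rangle - \langle\mu_0,\psi(0)\rangle - \int_0^s\langle\mu_r,\psi(r)^{1+\beta}-\phi\rangle\, dr
\]
is a purely discontinuous $L^{1+\theta}$ martingale with the compensator stated there.

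Introducing $X_s := \langle\mu_s,\psi(s)\rangle + \int_0^s\langle\mu_r,\phi\rangle\, dr$, the $\phi$-contributions cancel and one obtains
\[
X_s \;=\; \langle\mu_0,\psi(0)\rangle + \int_0^s\langle\mu_r,\varkappa\psi(r)^{1+\beta}\rangle\, dr + \MM(s).
\]
Now I would apply Ito's formula to $N_t(s)=e^{-X_s}$. Since $\MM$ is purely discontinuous there is no continuous-quadratic-variation contribution, and the random point measure $\widehat{\fN}(dr,d\nu)=\frac{\varkappa\beta(1+\beta)}{\Gamma(1-\beta)u^{\beta+2}}du\,\mu_r(dx)\,dr\,\delta_{u\delta_x}(d\nu)$ identified in the proof of Theorem \ref{thm.equivalent_def} contributes a drift
\[
N_t(s-)\int\!\!\int \bigl(e^{-u\psi(s)(x)}-1+u\psi(s)(x)\bigr)\frac{\varkappa\beta(1+\beta)}{\Gamma(1-\beta)u^{\beta+2}}\,du\,\mu_s(dx)\,ds.
\]
Combined with the $-N_t(s-)\langle\mu_s,\varkappa\psi(s)^{1+\beta}\rangle\,ds$ coming from the finite-variation part of $X$, this cancels exactly via the stable-L\'evy identity $\int_0^\infty (e^{-\eta u}-1+\eta u)\frac{\beta(1+\beta)}{\Gamma(1-\beta)u^{\beta+2}}\,du = \eta^{1+\beta}$ for $\eta\ge 0$. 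Hence $N_t(s)$ is a local martingale; when $\varphi,\phi\ge 0$ the comparison principle from Section \ref{sec.variant} gives $\psi\ge 0$ and $N_t(s)\le 1$, upgrading this to a true $\fF$-martingale. Specializing to $\varphi=0$, so $\psi(t)=0$ and $N_t(t)=e^{-\int_0^t\langle\mu_r,\phi\rangle dr}$, and evaluating the martingale property between $s=0$ and $s=t$ yields the claimed Laplace formula.

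The main obstacle is verifying admissibility of $f(s)=\psi(s)^{1+\beta}-\phi$ in Definition \ref{def.beta_RSBM}.2: one must track the composition regularity of $\psi^{1+\beta}$ and match the weight requirement $l<-t$, which reduces to the boundedness and H\"older estimates of Section \ref{sec.variant}. A secondary technical point is interpreting $\langle\mu_r,\phi\rangle$ when $\phi\in C^{-1+2\kappa}(\RR^2,e(l))$ is a genuine distribution; this is handled either by restricting first to smoother $\phi$ and concluding by density (as in the proof of Theorem \ref{thm.equivalent_def}, which uses the density of $\fD_\fH$) or by mollifying $\phi$ and passing to the limit via continuous dependence of $U^\phi_{t-s}\varphi$ on $\phi$ established in Section \ref{sec.variant}.
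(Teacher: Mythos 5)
Your argument is correct and follows essentially the same route as the paper: choose $\varphi_t(s)=U^\phi_{t-s}\varphi$, invoke the martingale characterization (Definition \ref{def.beta_RSBM}.2), apply It\^o's formula with $f(x)=e^{-x}$, and cancel the drift via the stable L\'evy identity. The only cosmetic difference is that you absorb $\int_0^s\langle\mu_r,\phi\rangle\,dr$ into the exponent before applying It\^o, whereas the paper first derives that $e^{-\langle\mu_s,\varphi_t(s)\rangle}-\int_0^s e^{-\langle\mu_r,\varphi_t(r)\rangle}\langle\mu_r,\phi\rangle\,dr$ is a local martingale and then invokes Lemma \ref{lem.martingale_trans} to reach the exponential form $N_t(s)$; these are mechanically equivalent.
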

\begin{proof}
    Let $f(x) = e^{-x}$ and $\varphi_t(s) = U_{t-s}^\phi\varphi$ in the It\^o's formula of $\langle\mu_s,\varphi_t(s)\rangle$ and then apply lemma \ref{lem.martingale_trans}, we can get the result.
\end{proof}
As discussed in \cite{jin2023compact}, let $\fP{n} = [-n,n]^2$ and $\fP{n}^m := \left(-n-\frac{1}{m},n+\frac{1}{m}\right)^2$. Define $\phi^n_m \in C^\infty_c$ such that
        \[
            \phi_n^m  
                \begin{cases}
                     = 0 & \text{ on $\fP{n}$}, \\
                     = m & \text{ on $(\fP{n}^m)^c$},\\
                     \in [0,m] & \text{ elsewhere.}
                \end{cases}
        \]
Since the limit 
\[
\lim_{m\rightarrow\infty}\EE\left[e^{-\int_0^t \langle\mu_r,\phi_n^m\rangle dr}\right]
\]
describes the probability that $\{\mu_s\}_{0\leq s\leq t}$ stays in the box $\fP{n}$ and the limit over $n$ describes the probability the compact support property holds. The compact support property holds if we show 
\[
    \lim_{n\rightarrow\infty}\lim_{m\rightarrow\infty}(U_t^{\phi_n^m}0)\eta_n = 0,
\]
uniformly over the support of $\mu_0$ for some cut-off function $\eta_n = 1$ on $\fP{n-1}$ and $0$ on $\fP{n}^c$. We here give a uniform interior estimation lemma on $U_t^{\phi_n^m} 0$ without proof. The similar result is proved in \cite{jin2023compact}. Also in \cite{moinat2020local,moinat2020space} for similar results of $\Phi_3^4$ model.
\begin{lem}\label{lem.local_est_weighted.nonlinear_PAM}
	    Let $n,m\in\NN$ and $a > 0$, we have 
        \begin{equation}
        \|(U_t^{\phi_n^m}0)\eta_n\|_{\fM_TC^{1-\kappa}(\RR^2,p(a))}\lesssim 1,
        \end{equation}
        where $'\lesssim'$ depends on time horizon $T$, $\|\xi\|_{C^{-1-\kappa}(\RR^2,p(b))}$ and $\|\fI\xi\diamond\xi\|_{C^{-2\kappa}(\RR^2,p(b))}$ for $b = \frac{4a}{\beta}$. Here the weight for the norm does not increase along time but remain $p(a)$. 
\end{lem}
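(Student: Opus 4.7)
The plan is to adapt the interior-regularity strategy developed in \cite{jin2023compact} for the case $\beta=1$ (itself modelled on the Moinat--Weber localisation for $\Phi_3^4$ in \cite{moinat2020local,moinat2020space}) to the present exponent $1+\beta$ with $0<\beta<1$. The essential mechanism is that the dissipative nonlinearity $-w^{1+\beta}$ dominates the forcing $\phi_n^m$ pointwise, so that $w:=U_t^{\phi_n^m}0$ stays bounded independently of $m$ on any fixed compact region. This is what allows the polynomial weight $p(a)$ to be preserved in time, rather than being converted into the exponentially growing weight $e(l+t)$ that enters the standard solution theory of Section \ref{sec.variant}.

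\textbf{Step 1 (a priori interior $L^\infty$-bound).} First I would show that, for every compact $K\subset\fP{n-1}$, the bound $\|w_t\|_{L^\infty(K)}\lesssim 1$ holds uniformly in $m$. Using the comparison principle with respect to $\phi$ formulated in the Remark following equation \eqref{equ.nonlinear_PAM_continuum_inhomo}, together with the polymer-measure representation built after \eqref{equ.PAM_dirac}, one can bound $w$ from above by a supersolution of the form $v(t,x)=V(d(x,(\fP{n}^m)^c))$, where $V$ solves an ODE in which the nonlinear dissipation $-V^{1+\beta}$ caps any growth fed in from the forcing region. Because $1+\beta>1$, this ODE admits a uniform-in-$m$ ceiling depending only on $T$ and the environment norms, which transfers to the desired interior bound on $w$.

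\textbf{Step 2 (localised Schauder estimate).} With Step~1 in hand, let $\eta_n$ be a smooth cut-off with $\eta_n=1$ on $\fP{n-1}$ and $\eta_n=0$ outside $\fP{n}$, and set $\tilde w:=w\eta_n$. Since $\xi$ acts as a pointwise multiplier it commutes with $\eta_n$, so $\tilde w$ satisfies
\[
\partial_t\tilde w = \fH\tilde w - \eta_n w^{1+\beta} + [\Delta,\eta_n]w,
\]
with $[\Delta,\eta_n]w=-2\nabla\eta_n\cdot\nabla w-(\Delta\eta_n)w$ supported in the shell $\fP{n}\setminus\fP{n-1}$, where Step~1 already controls $w$. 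Inserting the paracontrolled ansatz $\tilde w^{\#}=\tilde w-\tilde w\para\fI\xi$ and applying the continuum Schauder estimate of Theorem \ref{thm.PAM}, one gets a $C^{-1+2\kappa}$ bound on the right-hand side that depends only on the interior norm of $w$ and on the environment data $\xi$, $\fI\xi\diamond\xi$. Iterating over a shrinking family of nested cut-offs $\eta_{n-1}\prec\eta_n\prec\cdots$ bootstraps the regularity from $C^{0}$ up to $C^{1-\kappa}$; the weight stays polynomial throughout, because the cut-offs compactify the problem and the interior $L^\infty$-bound absorbs any exponential growth that would otherwise be generated by the rough product $\xi\tilde w$. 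The factor $b=4a/\beta$ arises because the nonlinear term $w^{1+\beta}$ applied to a $p(a)$-weighted function produces a weight of order $p(a)^{1+\beta}$, and resolving the resulting paraproducts forces the ambient weight to be enlarged by a factor of order $\beta^{-1}$, accumulated twice through the two sides of the decomposition.

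\textbf{Main obstacle.} The genuinely new difficulty compared with \cite{jin2023compact} lies in Step~1: one must obtain a comparison-type bound on $w$ in the presence of the distributional potential $\xi$, for which the classical Feynman--Kac calculus is unavailable. A representation through the polymer measure $Q_x$ is available, but to obtain a bound independent of $m$ one has to quantify the expected fraction of time the polymer path spends inside the high-forcing region $(\fP{n})^c$ and then exploit the power $1+\beta$ to absorb this contribution. Because the target weight is polynomial rather than exponential, this requires polynomial (rather than the much easier exponential) tail estimates on the polymer's exit from $\fP{n-1}$, and matching these estimates with the dissipative rate $w^{1+\beta}$, which becomes weaker as $\beta\to 0$, is where the argument is most delicate.
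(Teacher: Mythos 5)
The paper does not actually prove this lemma; it defers to \cite{jin2023compact} (the $\beta=1$ case) and to the Moinat--Weber localisation technique of \cite{moinat2020local,moinat2020space}. Your overall strategy --- an interior bound coming from the superlinear dissipation $-w^{1+\beta}$, followed by a localised Schauder estimate with cut-offs --- is indeed in the spirit of those references, so the proposal has the right architecture.

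Where you diverge, and where I see a genuine gap, is Step~1. The Moinat--Weber argument (and, following it, \cite{jin2023compact}) is purely analytic: one never constructs classical super\-solutions or invokes a probabilistic representation, but instead exploits the damping through a carefully chosen localised norm and a maximum-principle-type testing of the paracontrolled remainder, yielding an interior bound of the ``coming down from infinity'' type $\|w\|_{L^\infty(K)}\lesssim d(K,\partial\Omega)^{-2/\beta}+[\text{environment}]$. Your proposed supersolution $v(t,x)=V(d(x,(\fP{n}^m)^c))$ runs into two obstacles. First, $\fH=\Delta+\xi$ cannot be applied pointwise to a generic smooth function of the distance: $\fH v$ is only a distribution, and one has no pointwise sign of $\partial_t v-\fH v+v^{1+\beta}$ with which to verify the supersolution inequality. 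Second, the comparison principle stated in the Remark after \eqref{equ.nonlinear_PAM_continuum_inhomo} compares solutions for different multiplicative coefficients $\phi$ in $-\phi_t w_t$, not different additive forcings $g_t$ or different non-paracontrolled supersolutions; you would still need to justify monotonicity against a function outside the solution class, which the polymer-measure representation does not provide for free. So as written, Step~1 is not a proof, and you correctly identified it as the delicate part; the fix is to abandon the probabilistic detour and instead follow the analytic localisation of \cite{moinat2020local}, where the damping is harvested directly from the equation. Your explanation of $b=4a/\beta$ is also speculative (the $\beta^{-1}$ really traces back to the $d^{-2/\beta}$ interior rate rather than to an iteration of paraproduct weights), but since the paper itself leaves the constant-tracking implicit this is a minor point.
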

\begin{proof}[Proof of theorem \ref{thm.RSBM_CSP}]
    The equation of $(U_t^{\phi_n^m}0)\eta_n$ is given by \[
        \partial_t (U_t^{\phi_n^m}0)\eta_n = \fH(U_t^{\phi_n^m}0)\eta_n - (U_t^{\phi_n^m}0)^{1+\beta}\eta_n + 2\nabla((U_t^{\phi_n^m}0)\nabla\eta_n) - (U_t^{\phi_n^m}0)\Delta\eta_n.
    \]
    Theorem \ref{thm.PAM} then gives a bound on $\|(U_t^{\phi_n^m}0)\eta_n\|_{\fL_T^{1-\kappa}}(\RR^2,e(l))$. Thus the compact embedding theorem tells us that, up to some subsequence, we have \[
        U_t:=\lim_{m\rightarrow\infty}\lim_{n\rightarrow\infty}(U_t^{\phi_n^m}0)\eta_n
    \]
    is the solution to the equation
    \[
        \partial_t U_t = \fH U_t - U_t^{1+\beta},\qquad U_0 = 0.
    \]
    Thus the uniqueness theorem tells us that $U_t = 0$. And we get the result.
\end{proof}
We now turn to the super-exponentially persistence of the generalized rough super Brownian motion. The super-exponentially persistence has been discussed in \cite{perkowski2021rough} using the approximation of super process in box of $\RR^2$. We here will use the result in \cite{perkowski2021rough}, together with the coupling method, to show the theorem \ref{thm.RSBM_SEP}.
\begin{proof}[Proof of theorem \ref{thm.RSBM_SEP}]
    We need to consider mixed super processes for the proof. Let $\bar{\mu}^n$ be the mixed particle systems with branching rate $(1+c)|\xi^n|$ and branching mechanism 
    \[
        \frac{h^n + ch}{1+c},
    \]
    where $h^n$ and $h$ are defined as $g^n$ and $g$ with $\beta = 1$. For $0 < \beta < 1$, we define a sequence of independent probability measures $\{\mu^{n,i}\}_{i\leq N(\beta)}$ corresponding to particle systems with branching rate $(1+c)|\xi^n|$ and branching mechanism 
    \[
        \frac{g^n + cg}{1+c}.
    \]
    For any $\varphi \in C_c^\infty(\RR^2)$, we now then need to compare the two quantities $\langle\bar{\mu}^n,\varphi\rangle$ and $\sum_{i=1}^{N(\beta)}\langle\mu^{n,i},\varphi\rangle$. 

    Let $Z_0$ be the random variable with generating function $\frac{h^n + ch}{1+c}$ and $Z_i$ be the random variable with generating function $\frac{g^n + cg}{1+c}$ for $i=1,\cdots,N(\beta)$, we need to prove 
    \[
        \PP[Z_0 \geq 2] \leq \PP\left[\sum_i^{N(\beta)} Z_i \geq 2\right],
    \]
    which is equivalent to showing that 
    \[
        \PP[Z_0 = 0] \geq \PP\left[\sum_i^{N(\beta)} Z_i = 0\right] = \PP\left[Z_1 = 0\right]^{N(\beta)},
    \]
    Since $c > 0$, we know $\PP[Z_0 = 0] >0$ and $\PP\left[Z_1 = 0\right] < 1$, the existence of $N(\beta)$ is guaranteed. Thus, there exists $N(\beta) > 0$ such that 
    \[
\langle\bar{\mu}^n,\varphi\rangle<\sum_{i=1}^{N(\beta)}\langle\mu^{n,i},\varphi\rangle. 
    \]
    Now consider event $A_0 = \{\lim_{t\rightarrow\infty}e^{-t\lambda}\langle\bar{\mu}^n,\varphi\rangle\}$, $A_i = \{\lim_{t\rightarrow\infty}e^{-t\lambda}\langle\bar{\mu}^n,\varphi\rangle\}$,then 
    \[
        A_0 \subset \bigcup_{i=1}^{N(\beta)}A_i.
    \]
    We also have $\PP[A_i] = \PP[A_1]$ for any $i = 1,\cdots, N(\beta)$. By the result in \cite{perkowski2021rough}, we know 
    \[
    \PP[A_0] > 0,
    \]
    Hence we must have $\PP[A_1] > 0$, which gives the result.
\end{proof}

\appendix
\section{Random point measure}\label{app.random}
We review some basics on the random point measure and purely discontinuous martingale in this  and next appendix. We refer reader to \cite{liptser2012theory} for more detail.

Let $(\Omega, \fF, \fF_t, \PP)$ be a probability space and $(E,\fE)$ be a Polish space with $\sigma-$algebra generated by topological sets. Denote $\RR_+ = [0,\infty)$. We give the definition of random measure on the space $\RR_+\times E$ here
\begin{definition}
    The family $\mu = \{\mu(\omega,dt,dx), \omega \in \Omega\}$ of $\sigma$ finite non-negative measure $\mu(\omega,\cdot)$ on $(\RR_+\times E,\fB(\RR_+)\otimes \fE)$, such that for each $A \in \fB(\RR_+)\otimes \fE$ the variable $\mu(\cdot,A)$ is $\fF$ measurable and 
    \[
        \mu(\omega,\{0\}\times E) = 0, \qquad \forall \omega \in \Omega,
    \]
    is called a random measure on $\RR_+\times E$. We call $\mu$ predictable if for any predictable random variable $X(\omega,s,x)$, the process 
    \[
        \int_0^t\int_E X(\omega,s,x)\mu(\omega,ds,dx)
    \]
    is predictable.
\end{definition}
We then give the definition of the compensator of a random measure $\mu$.
\begin{definition}
    A predictable random measure $\nu$ is called the compensator of the random measure $\mu$ if for any non-negative predictable random process $X(\omega,s,x)$, we have 
    \[
        \EE\left[\int_0^t\int_EX(s,x)\mu(dsdx)\right] = \EE\left[\int_0^t\int_EX(s,x)\nu(dsdx)\right]
    \]
\end{definition}
\begin{example}
    We will consider the random point measure $\mu$ associated to jumps of some process $X_t$ with value in $E$. The random point measure is defined via 
    \[
        \mu = \sum_{0\leq s\leq t} \delta_{s,X_{s} - X_{s-}}
    \]
    its compensator will play an important role in the It\^o's formula for a purely discontinuous martigale.
\end{example}
Here is another example of random measure on the space $E$ without time. 
\begin{example}
    A Poisson random measure $\Lambda$ on $E$ with intensity $\mu$ is a random measure with Laplace functional given by 
    \[
        \EE_\Lambda[e^{-\langle\cdot,\varphi\rangle}] = e^{-\int_{E}(1-e^{-\varphi(x)})\mu(dx)}
    \]
    for any non-negative measurable function $\varphi: E\rightarrow\RR$.
\end{example}
The Poisson random measure is a point measure whose formal definition is given by Poisson random variables describing the distribution of particle numbers in some set. We only need its Laplace characterization here which can also determine the Poisson random measure uniquely.

We also need a lemma to the Poisson cluster random measure.
\begin{lem}\label{lem.Poisson_cluster}
    Suppose we have a family of independent random measure $(\PP_x)_{x\in E}$ on the space $\tilde{E}$ and a Poisson random measure $\Lambda$ on $E$ with intensity $\mu$. Then the Laplace functional of the random measure $\PP_{\Lambda} : =\int_{E} \PP_x \Lambda(dx)$ is given by 
    \[
        \EE_{\Lambda}[e^{-\langle\cdot,\varphi\rangle}] = e^{-\int_{E}(1-\EE_x[e^{-\langle\cdot,\varphi\rangle}])\mu(dx)}
    \]
    for any non-negative measurable function $\varphi:\tilde{E}\rightarrow\RR$.
\end{lem}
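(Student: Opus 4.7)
The plan is to compute the Laplace functional in two steps: first condition on the configuration of $\Lambda$ and exploit the independence of the family $(\PP_x)_{x\in E}$ to factorise the conditional expectation, then uncondition using the defining Laplace formula of a Poisson random measure.

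Concretely, viewing $\Lambda$ as the point measure $\sum_i \delta_{x_i}$ on $E$, the construction $\PP_\Lambda = \int_E \PP_x\,\Lambda(dx)$ becomes $\sum_i \PP_{x_i}$, a sum of independent random measures on $\tilde E$. Independence therefore gives, for any non-negative measurable $\varphi$,
\begin{equation*}
\EE\bigl[e^{-\langle \PP_\Lambda,\varphi\rangle}\,\big|\,\Lambda\bigr] = \prod_i \EE_{x_i}\bigl[e^{-\langle\cdot,\varphi\rangle}\bigr] = \exp\Bigl(-\int_E \psi(x)\,\Lambda(dx)\Bigr),
\end{equation*}
where $\psi(x) := -\log \EE_x\bigl[e^{-\langle\cdot,\varphi\rangle}\bigr] \geq 0$, well defined because $\varphi \geq 0$ forces $\EE_x[e^{-\langle\cdot,\varphi\rangle}] \in (0,1]$. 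Taking expectation on both sides and applying the Laplace functional of the Poisson random measure $\Lambda$ to the non-negative function $\psi$, we obtain
\begin{equation*}
\EE\bigl[e^{-\langle \PP_\Lambda,\varphi\rangle}\bigr] = \EE\bigl[e^{-\langle \Lambda,\psi\rangle}\bigr] = \exp\Bigl(-\int_E \bigl(1 - e^{-\psi(x)}\bigr)\mu(dx)\Bigr) = \exp\Bigl(-\int_E \bigl(1 - \EE_x[e^{-\langle\cdot,\varphi\rangle}]\bigr)\mu(dx)\Bigr),
\end{equation*}
which is exactly the stated identity.

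The main obstacle is the bookkeeping when $\mu(E) = \infty$: the representation $\Lambda = \sum_i \delta_{x_i}$ is then an infinite sum, only $\sigma$-finitely indexed, and the formal product above needs justification. I would exhaust $E$ by sets $A_n \uparrow E$ with $\mu(A_n) < \infty$; on each $A_n$ the restriction $\Lambda|_{A_n}$ has the explicit representation as a $\mathrm{Poisson}(\mu(A_n))$ number $N_n$ of i.i.d.\ atoms with law $\mu|_{A_n}/\mu(A_n)$, and the finite-product calculation above is elementary. A monotone convergence argument, using $\psi \geq 0$ and $0 \leq 1 - \EE_x[e^{-\langle\cdot,\varphi\rangle}] \leq 1$, then upgrades the identity from $A_n$ to $E$. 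In the degenerate case $\int_E (1 - \EE_x[e^{-\langle\cdot,\varphi\rangle}])\mu(dx) = \infty$, both sides vanish, so the formula continues to hold.
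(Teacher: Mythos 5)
Your proposal is correct and follows the same route as the paper: condition on $\Lambda$, use independence of the family $(\PP_x)$ to turn the conditional Laplace functional into $\exp(-\langle\Lambda,\psi\rangle)$ with $\psi(x)=-\log\EE_x[e^{-\langle\cdot,\varphi\rangle}]$, then uncondition via the Laplace formula for a Poisson random measure. The only difference is that you spell out the measurability and $\sigma$-finiteness bookkeeping (exhaustion by $A_n\uparrow E$ and monotone convergence), which the paper leaves implicit; this is a harmless refinement, not a different argument.
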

\begin{proof}
    For a fixed realization of Poisson random measure $\Lambda$, the conditional Laplace functional of $\PP_{\Lambda}$ is given by 
    \[
        \prod_{x \in \Lambda} \EE_x\left[e^{-\langle\cdot,\varphi\rangle}\right] = e^{\int_{E}\log \EE_x[e^{-\langle\cdot,\varphi\rangle}]\Lambda(dx)} = e^{-\langle\Lambda,-\log \EE_x[e^{-\langle\cdot,\varphi\rangle}]\rangle}
    \]
    Now take the expectation with respect to Poisson random measure $\Lambda$ and use the Laplace functional of Poisson random measure, we obtain the result.
\end{proof}

\section{Canonical decomposition of a semi-martingale and It\^o's formula}

In this section, we say a stochastic process $A$ is of locally bounded variation if almost surely, the total variation of $A$, denoted by Var($A$), is finite on every finite interval. We also say $A$ is locally integrable if there exists a localization $T_n$ such that $\EE[Var(A)_{T_n}] < \infty$ for all $n\in\NN$.

It is well-known that a càdlàg stochastic process $X_t$ on $\RR$ is semi-martingale if it admits the decomposition
\[
    X_t = X_0 + M_t + A_t
\]
where $M_t$ is a local martingale and $A_t$ is a locally bounded variation process. The canonical decomposition of $X_t$ is given by 
\begin{theorem}\label{thm.semi_decomposition}
    Suppose $X_t$ is a semi-martingale and $a > 0$. Then $X_t$ admits a unique decomposition 
    \[
        X_t = X_0 + A_t^a + X_t^c + \int_0^t\int_{|x| \leq a}x d(\mu-\nu) + \int_0^t\int_{|x| > a} x d\mu
    \]
    where $X_t^c$ is the continuous martingale part of $X_t$, $A_t^a$ is a continuous predictable bounded variation process, $\mu$ is the jump measure of $X_t$ given by 
    \[
        \mu((0,t],C) = \sum_{0\leq s \leq t}\1_{\{X_s-X_{s-} \in C\}}, \quad C\subset \RR
    \]
    and $\nu$ is the compensator of $\mu$. 
\end{theorem}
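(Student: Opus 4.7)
The plan is to construct the decomposition by first peeling off the large jumps, then invoking the canonical (Doob-Meyer) decomposition for special semimartingales, and finally splitting the resulting local martingale into its continuous and purely discontinuous parts.

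First I would verify that the jump measure $\mu$ defined by $\mu((0,t],C) = \sum_{0\leq s\leq t}\1_{\{X_s - X_{s-}\in C\}}$ is an integer-valued random measure on $(0,\infty)\times\RR$ which is $\sigma$-finite (since on each compact time interval and outside any neighbourhood of $0$, the càdlàg path of $X$ has only finitely many jumps). Standard Doob-Meyer theory (after a suitable localization ensuring integrability of $\mu((0,t],C)$ for $C$ bounded away from $0$) then yields the existence and uniqueness of a predictable compensator $\nu$.

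Next I would isolate the large-jump part by setting
\[
J^a_t := \int_0^t\int_{|x|>a} x\,\mu(ds,dx) = \sum_{0\leq s\leq t}(X_s - X_{s-})\1_{|X_s - X_{s-}|>a}.
\]
Because $X$ is càdlàg, the sum has only finitely many terms on each compact interval, so $J^a$ is automatically of finite variation. Hence $Y_t := X_t - J^a_t$ is a semimartingale whose jumps are all bounded by $a$, i.e. $Y$ is a \emph{special} semimartingale. The classical canonical decomposition of special semimartingales then provides a unique pair $(M,A^a)$ with $M$ a local martingale ($M_0=0$), $A^a$ a predictable process of locally integrable variation, and $Y_t = X_0 + A^a_t + M_t$. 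I would then use the orthogonal decomposition $M = X^c + M^d$ of a local martingale into its continuous and purely discontinuous parts. By construction, the jumps of $M^d$ coincide with the small jumps of $X$; by the characterization of a purely discontinuous local martingale in terms of its jump measure and compensator, one identifies $M^d_t = \int_0^t\int_{|x|\leq a} x\,d(\mu - \nu)$, where local square-integrability of this stochastic integral follows from $|x|\leq a$ together with the local summability of squared small jumps of a semimartingale.

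For the uniqueness of the whole decomposition, if $X_0 + \tilde A^a + \tilde X^c + \int\int_{|x|\leq a}x\,d(\mu-\nu) + J^a$ is another such decomposition, subtracting and cancelling the (objectively defined) pieces $J^a$ and $\int\int_{|x|\leq a}x\,d(\mu-\nu)$ leaves a continuous local martingale equal to a continuous predictable finite-variation process, which must be identically zero; this pins down $\tilde X^c = X^c$ and $\tilde A^a = A^a$. The main obstacle is the rigorous construction of the compensator $\nu$ together with the verification that $\int_0^t\int_{|x|\leq a} x\,d(\mu-\nu)$ is well-defined as a purely discontinuous locally square-integrable martingale; once these objects are available, the rest of the argument is a clean bookkeeping built on the uniqueness of the canonical decomposition for special semimartingales.
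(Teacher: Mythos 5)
The paper never proves this statement: Theorem \ref{thm.semi_decomposition} is quoted in the appendix as the classical canonical representation of a real-valued semimartingale, with the reader referred to \cite{liptser2012theory}, so the only comparison available is with the standard literature proof. Your outline is exactly that standard argument: strip off the finitely many jumps of size larger than $a$ (a finite-variation process), note that the remainder has jumps bounded by $a$ and is therefore a special semimartingale, apply its unique canonical decomposition, split the local martingale part into continuous and purely discontinuous pieces, identify the purely discontinuous piece with $\int_0^t\int_{|x|\le a}x\,d(\mu-\nu)$, and obtain uniqueness because a continuous local martingale of locally bounded variation is constant. In outline this is correct and consistent with the cited source.

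Two points need tightening for a complete proof of the statement as written. First, the claim that ``the jumps of $M^d$ coincide with the small jumps of $X$'' holds only modulo the jumps of $A^a$: in general $\Delta M^d_t=\Delta X_t\1_{\{|\Delta X_t|\le a\}}-\Delta A^a_t$, while the compensated integral has jumps $\Delta X_t\1_{\{|\Delta X_t|\le a\}}-\int_{|x|\le a}x\,\nu(\{t\},dx)$; the identification therefore rests on the identity $\Delta A^a_t=\int_{|x|\le a}x\,\nu(\{t\},dx)$, which should be stated (it follows from the predictability of $A^a$ and the defining property of the compensator), since a purely discontinuous local martingale is determined by its jumps. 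Second, your construction only yields a \emph{predictable} $A^a$ of locally integrable variation; the continuity asserted in the theorem is not automatic for a general semimartingale, and holds precisely when $X$ is quasi-left-continuous, i.e. $\nu(\{t\}\times\RR)=0$ for all $t$. So either that hypothesis should be recorded (harmless for the use made of the theorem in Section \ref{sec.martingale_problem}, where the compensator obtained is absolutely continuous in time) or the word ``continuous'' dropped; this is a defect of the statement as transcribed rather than of your strategy, but your argument as written does not address it.
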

Now consider a much more general case when the jump measure $\mu$ is defined on some Polish space $E$ and $\nu$ is again its compensator. Consider a semi-martingale in the form 
    \begin{equation}\label{equ.semimartingale}
        X_t = X_0 + A_t + X_t^c + \int_0^t\int_E g_1 d(\mu-\nu) + \int_0^t\int_E g_2 d\mu
    \end{equation}
    where $A_t$ is a continuous locally bounded variation process and $X_t^c$ is a continuous local martingale. For $g_1,g_2$, we also require $g_1g_2 =0$ and $\int_0^t\int_E |g_2|d\mu < \infty$ after some localization. The condition on $g_1$ is much more complicated and we refer the reader to the section 5, chapter 3 in \cite{liptser2012theory} for the definition.
\begin{lem}\label{lem.Ito}
    Let $f \in C^{1,2}(\RR_+\times \RR)$, then for semi-martingale $X_t$ with decomposition \eqref{equ.semimartingale}, we have the It\^o's formula
    \begin{equation}
    \begin{aligned}
        &f(t,X_t) - f(0,X_0) \\
        =& \int_0^t \partial_tf(s,X_s) ds + \int_0^t \partial_xf(s,X_s)dA_s + \frac{1}{2}\int_0^t \partial^2_{xx}f(s,X_s)d\langle X^c\rangle_s\\
        +&\int_{(0,t]}\int_E(f(s,X_{s-} + g_2(s,x)) - f(s,X_{s-}))\mu(ds,dx) \\
        +&\int_{(0,t]}\int_E(f(s,X_{s-}+g_1(s,x)) -f(s,X_{s-}) - \partial_x f(s,X_{s-})g_1(s,x))\nu(ds,dx)
    \end{aligned}
    \end{equation}
    when the last term is of locally integrable.
\end{lem}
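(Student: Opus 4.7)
}
This is the classical It\^o formula for semi-martingales with jumps, essentially Theorem 4.57 in Chapter I of Jacod--Shiryaev or the result in Section 5, Chapter 3 of \cite{liptser2012theory}. The natural strategy is to decompose the semi-martingale $X_t$ given by \eqref{equ.semimartingale} into its continuous and jump components, handle the continuous part by classical It\^o, and deal with the jump part by a telescoping argument together with the compensation that turns $\mu$ into $\nu$ for the small jumps.

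First I would localise. By a stopping-time argument I may assume that Var$(A)_t$, the predictable quadratic variation $\langle X^c\rangle_t$, and the total mass $\int_0^t\int_E |g_2|\, d\mu$ are all uniformly bounded, and that $\int_0^{\cdot}\int_E g_1\, d(\mu-\nu)$ is a square-integrable martingale with $\int_0^t\int_E g_1^2\, d\nu$ locally bounded. This is possible because the assumptions on $g_1,g_2$ in \cite{liptser2012theory} Chapter 3 Section 5 are exactly what allows such a localisation. Under these assumptions every term in the claimed formula is well-defined.

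Next I would split the jumps of $X$ according to whether they come from $g_1$ or $g_2$ (which is unambiguous since $g_1 g_2 = 0$). The continuous part $Y_t := X_0 + A_t + X_t^c$ is a classical continuous semi-martingale, and standard It\^o produces the first line
\[
f(t,Y_t) - f(0,X_0) = \int_0^t \partial_t f(s,Y_s)\,ds + \int_0^t \partial_x f(s,Y_s)\,dA_s + \tfrac{1}{2}\int_0^t \partial^2_{xx}f(s,Y_s)\,d\langle X^c\rangle_s + \text{local martingale}.
\]
Then I would add the jumps one at a time via the elementary identity
\[
f(s,X_s) - f(s,X_{s-}) = \bigl[f(s,X_{s-}+\Delta X_s) - f(s,X_{s-})\bigr],
\]
which, summed over all jump times $\le t$, contributes $\int_0^t\!\int_E [f(s,X_{s-}+g_2) - f(s,X_{s-})]\,\mu(ds,dx)$ for the large jumps (absolutely convergent since $\int |g_2|\,d\mu < \infty$) and an analogous raw sum over small jumps. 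For the small-jump part I would subtract and add the first-order term $\partial_x f(s,X_{s-})\,g_1$: the integrand $f(s,X_{s-}+g_1) - f(s,X_{s-}) - \partial_x f(s,X_{s-})g_1$ is of order $g_1^2$ by Taylor, which is $\nu$-integrable by hypothesis and gives the last line of the formula when $\mu$ is replaced by its compensator $\nu$; the leftover first-order piece $\partial_x f(s,X_{s-})g_1$ produces, together with the compensated measure $\mu - \nu$, a local martingale increment that merges with the continuous-martingale contribution from classical It\^o.

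The main obstacle is the compensation step for the small-jump part: replacing $\mu$ by $\nu$ in $\int\!\int[f(X_{s-}+g_1)-f(X_{s-})-\partial_x f\,g_1]\,(\cdot)$ is only legitimate once one checks that the $g_1^2$ bound makes the $\nu$-integral locally integrable, which is precisely the hypothesis ``when the last term is locally integrable'' in the statement. Once this is in hand, collecting the continuous-martingale part, the compensated martingale $\int g_1\,d(\mu-\nu)$ after Taylor expansion, and the leftover local-martingale pieces into a single local martingale term completes the identification with the formula as stated. Since all of these steps are standard and carried out in full in \cite{liptser2012theory}, a complete argument would cite that reference rather than reproduce the computation.
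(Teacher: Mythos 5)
The paper offers no proof of Lemma \ref{lem.Ito}: it is stated as a standard fact with an implicit reference to \cite{liptser2012theory}, which is exactly the conclusion you reach, so your approach matches the paper's. One small caution in your sketch: applying classical It\^o to the continuous process $Y_t = X_0 + A_t + X_t^c$ alone produces integrands $\partial_t f(s,Y_s)$, $\partial_x f(s,Y_s)$, $\partial^2_{xx} f(s,Y_s)$ rather than the ones with $X_s$ and $X_{s-}$ that appear in the target formula, so you cannot literally "first do $Y$ then add jumps"; the standard route is either to apply the general semimartingale It\^o formula to $X_t$ directly or to interleave continuous evolution between successive jump times, so that after each jump the argument of $f$ and its derivatives is the full process $X$ rather than just its continuous part. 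With that adjustment the rest of your sketch (localization, splitting jumps according to $g_1g_2=0$, Taylor bound of order $g_1^2$ to justify compensating the small jumps, and the local-integrability hypothesis supplying exactly what is needed for the $\nu$-integral) is the correct standard argument.
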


We also need a unique decomposition of special semi-martingale whose supreme of jumps are of locally integrable. 
\begin{lem}\label{lem.sp_decomposition}
    Let $X_t$ be a special semi-martingale, then $X_t$ admits a unique decomposition 
    \[
        X_t = X_0 + A_t + M_t
    \]
    where $M_t$ is a local martingale and $A_t$ is of locally bounded variation.
\end{lem}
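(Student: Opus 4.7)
The plan is to derive this as a corollary of the general canonical decomposition of semimartingales stated just above (Theorem~\ref{thm.semi_decomposition}), using the fact that a special semimartingale is precisely one for which the ``big jump'' part can be compensated into a local martingale.

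First I would establish existence. Start from the decomposition of the previous theorem with truncation level $a>0$:
\[
    X_t = X_0 + A_t^a + X_t^c + \int_0^t\!\!\int_{|x|\leq a} x\,d(\mu-\nu) + \int_0^t\!\!\int_{|x|>a} x\,d\mu .
\]
The defining property of a special semimartingale is that the process $J_t := \sup_{s\leq t}|X_s - X_{s-}|$ is locally integrable; equivalently, the process $\int_0^t\!\int_{|x|>a} |x|\,d\mu$ is locally integrable. This local integrability is precisely the condition needed to pass $\int_0^t\!\int_{|x|>a} x\,d\nu$ to a well-defined predictable process of locally bounded variation, and to ensure that
\[
   N_t \;:=\; \int_0^t\!\!\int_{|x|>a} x\,d(\mu-\nu)
\]
is a local martingale. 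Setting $A_t := A_t^a + \int_0^t\!\int_{|x|>a} x\,d\nu$ and $M_t := X_t^c + \int_0^t\!\int_{|x|\leq a} x\,d(\mu-\nu) + N_t$, one obtains the desired decomposition with $A_t$ predictable of locally bounded variation and $M_t$ a local martingale.

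For uniqueness, suppose $X_t = X_0 + A_t + M_t = X_0 + A'_t + M'_t$ are two such decompositions (both $A_t, A'_t$ taken predictable of locally bounded variation, as is forced by the construction above). Then
\[
   A_t - A'_t \;=\; M'_t - M_t
\]
is simultaneously a predictable process of locally bounded variation and a local martingale starting at $0$. A standard fact (which I would invoke as a classical result from \cite{liptser2012theory}) says any such process is indistinguishable from $0$: after a suitable localization reducing to the integrable case, one tests against bounded stopping times and uses predictability to conclude the process has zero expectation at every stopping time in the localizing sequence, hence vanishes identically. This forces $A_t = A'_t$ and $M_t = M'_t$.

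The main obstacle is the uniqueness argument: it relies on the nontrivial fact that a predictable local martingale of locally bounded variation is zero, which in turn requires the standard approximation by elementary predictable processes together with a localization to reduce to the uniformly integrable case. The existence side is essentially bookkeeping given Theorem~\ref{thm.semi_decomposition}; the only point to check carefully is that local integrability of $J_t$ is what legitimises writing the compensator $\int_0^t\!\int_{|x|>a} x\,d\nu$ and recombining it with $A_t^a$ to produce a single predictable finite-variation component. Since this lemma is cited only to identify coefficients in two semimartingale representations inside the proof of Theorem~\ref{thm.equivalent_def}, I would keep the proof short and refer to \cite{liptser2012theory} for both the background notion of ``special semimartingale'' and the uniqueness of predictable finite-variation local martingales.
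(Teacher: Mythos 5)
Your proposal is essentially correct, and it matches the paper's treatment in spirit: the paper gives no proof of this lemma at all, stating it as a classical fact and pointing to \cite{liptser2012theory}, so your argument simply supplies the standard textbook derivation (existence from the truncated canonical decomposition of Theorem~\ref{thm.semi_decomposition} plus local integrability of the large jumps, uniqueness from the fact that a predictable local martingale of locally bounded variation vanishing at $0$ is identically zero). One point deserves emphasis: as literally stated, the lemma omits the requirement that $A_t$ be \emph{predictable}, and without that requirement the uniqueness claim is false (e.g.\ a compensated Poisson process admits the trivial decomposition with $A\equiv 0$ as well as the decomposition with $A_t$ the jump part minus its compensator split differently). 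You correctly reinstate predictability in the uniqueness step, and this is indeed how the lemma is used in the proof of Theorem~\ref{thm.equivalent_def}, where the \emph{predictable} locally bounded variation parts of the two representations are identified; it would be cleaner to say that predictability of $A$ is part of the definition of the canonical decomposition of a special semimartingale rather than something ``forced by the construction''. The uniqueness ingredient itself (a predictable finite-variation local martingale is constant) is nontrivial, and invoking it from \cite{liptser2012theory} rather than reproving it is appropriate here.
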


\section{Lemmas for Branching random walk}
We here give some useful lemma on branching random walk.
\begin{lem}\label{lem.laplace_equation_formula}
    For any branching random walk $Z_t$ on $\ZZ_n^2$ with branching mechanism $g(x,s) = \sum_{k=0}^\infty p_k(x)s^k$ and branching rate $a(x,ds)$, if 
    \[
        \sum_{k=0}^\infty kp_k(x) < K
    \]
    uniformly over $t,x$ with some finite constant $K$, then the branching random walk exists(no explosion, especially $\EE[\langle Z_t,1\rangle]$ is finite) and we have the expression for the Laplace functional of $Z_t$ for any test function $\varphi$ such that 
    \begin{equation}\label{equ.laplace_equation_formula}
    \begin{aligned}
        w_t(x) &:= \EE\left[e^{-\langle Z_t,\varphi\rangle}\right|Z_0 = \delta_x] \\
        &= \EE\left[e^{-\varphi(X_t) - a(0,t)} + \int_0^te^{-a(0,r)}g(r,X_r,w_{t-r}(X_r))a(X_r,dr)|X_0 = x\right]
    \end{aligned}
    \end{equation}
    where $X_t$ is the simple random walk on $\ZZ_n^2$ and 
    \[
        a(s,t) = \int_s^t a(X_r,dr)
    \]
\end{lem}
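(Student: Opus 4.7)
The plan is to first establish non-explosion, then condition on the first branching time and invoke the branching property to derive the integral equation for $w_t(x)$.

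\textbf{Existence.} First I would show that no explosion occurs. Let $N_t^x = \EE[\langle Z_t, 1\rangle \mid Z_0 = \delta_x]$. By conditioning on the first branching event (which, for a single particle following the random walk $X$, has conditional law $a(X_r, dr)e^{-a(0,r)}$ under the assumption that $a(X_\cdot, \cdot)$ is locally finite), I obtain an integral inequality
\[
    N_t^x \le 1 + K \int_0^t \sup_y N_{t-r}^y \, \|a\|_\infty \, dr,
\]
using $\sum_k k p_k \le K$ and the fact that $|\xi^n|/n$ is uniformly bounded (so $a(x,ds)$ has a bounded density). Gronwall's inequality then gives $\sup_x N_t^x < \infty$ for every $t$, so no explosion happens in finite time and $Z_t$ is a well-defined $E^n$-valued process.

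\textbf{Laplace functional via first-jump decomposition.} Let $\tau$ be the first branching time, characterised (as recalled in \eqref{equ.branching_time}) by $\PP[\tau \ge t \mid X] = e^{-a(0,t)}$. Split according to whether $\tau > t$ or $\tau \le t$:
\[
    e^{-\langle Z_t,\varphi\rangle} = e^{-\varphi(X_t)}\1_{\tau>t} + e^{-\langle Z_t,\varphi\rangle}\1_{\tau\le t}.
\]
On $\{\tau \le t\}$, at time $\tau$ the particle at $X_\tau$ is replaced by $\zeta$ offspring, where $\zeta$ has distribution $(p_k(X_\tau))_{k\ge 0}$. By the strong Markov property of $X$ together with the branching property (the offspring evolve as independent copies of the BRW started at $X_\tau$),
\[
    \EE\bigl[e^{-\langle Z_t,\varphi\rangle} \,\big|\, \tau, X_\tau, \zeta\bigr] = w_{t-\tau}(X_\tau)^{\zeta}.
\]
Averaging over $\zeta$ produces $g(X_\tau, w_{t-\tau}(X_\tau))$. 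Using the conditional law of $\tau$ given $X$, namely density $a(X_r, dr)e^{-a(0,r)}$ for $r \in [0,t]$ with mass $e^{-a(0,t)}$ at $\{\tau > t\}$, I get
\[
    w_t(x) = \EE\Bigl[e^{-\varphi(X_t)-a(0,t)} + \int_0^t e^{-a(0,r)} g(X_r, w_{t-r}(X_r)) a(X_r,dr) \,\Big|\, X_0 = x\Bigr],
\]
which is exactly \eqref{equ.laplace_equation_formula}. All interchanges of sum and expectation are justified because $0 \le w_s \le 1$ and $\sum_k p_k(x) = 1$.

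\textbf{Main obstacle.} The rigorous justification of the branching decomposition is the delicate point. One has to couple the construction of $Z$ with the random walk $X$ and an independent exponential clock so that on $\{\tau \le t\}$ the descendants of each of the $\zeta$ children are genuinely independent copies of the BRW started at $X_\tau$, which requires invoking the strong Markov property at the stopping time $\tau$ relative to the filtration generated by $(X, N^{\cdot})$. Once non-explosion from the first step guarantees that $w_{t-r}(y)$ is a well-defined element of $[0,1]$ for every $(r,y)$, the branching identity and Fubini close the argument; the remaining calculation (computing $\int_0^t \cdots a(X_r,dr)$ from the conditional density of $\tau$) is routine.
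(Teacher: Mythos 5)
Your argument is correct and follows essentially the same route as the paper: condition on the first branching time $\tau$, split on $\{\tau>t\}$ versus $\{\tau\le t\}$, invoke the strong Markov and branching properties to reduce the post-branching expectation to $w_{t-\tau}(X_\tau)^{\zeta}$, average over the offspring number to produce $g(X_\tau,w_{t-\tau}(X_\tau))$, and integrate against the conditional law of $\tau$ given $X$. The only deviation is in the non-explosion step, where you give a self-contained Gronwall argument (implicitly assuming the rate $a$ has a bounded density, which holds for the $a=|\xi^n|\,dt$ used here) whereas the paper simply cites Athreya--Ney; this is a cosmetic difference, not a different method.
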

\begin{proof}
    The non-explosion follows by the corollary 1, page 111 in  \cite{athreya1972branching}. Let $\tau$ to be the first branching time of $Z$ in the time interval $[s,t]$ and $M(x)$ are independent random variables with the generator function $g(x,s)$. We define filtrations $\fF_t = \sigma(Z_s,s\leq t)$ and $\fG_t = \sigma(X_s,s\leq t)$. Then the random time $\tau$ is both stopping time of filtrations $\fF_t$ and $\fG_t$. Then we have 
    \begin{equation}
        \begin{aligned}
            w_{t}(x) &= \EE_{x}\left[e^{-\langle Z_t,\varphi\rangle}\1_{\tau \leq t} + e^{-\langle Z_t,\varphi\rangle}\1_{\tau > t}\right]\\
            &= \EE_{x}\left[\EE[w_{t-\tau}(X_\tau)^{M(X_{\tau})}|\fF_{\tau}]\1_{\tau \leq t} + e^{-\langle X_t,\varphi\rangle}\1_{\tau > t}\right]\\
            &=\EE_{x}\left[e^{-\langle X_t,\varphi\rangle - a(s,t)} + g(\tau,X_\tau,w_{t-\tau}(X_\tau))\1_{\tau\leq t}\right]
        \end{aligned}
    \end{equation}
    by \eqref{equ.branching_time} and the definition of moment generating function. Then by the law of $\tau$, we obtain the desired result.
\end{proof}
Here is a general lemma concerning the Feynman Kac formula and the mild solution to some equation.
\begin{lem}\label{lem.laplace_equi}
    For any measurable function $g:[0,\infty]\times \ZZ_n^2\times [0,1] \rightarrow [0,1]$, following the notation in lemma \ref{lem.laplace_equation_formula} ,the equation \eqref{equ.laplace_equation_formula} is equivalent to 
    \begin{equation}
        w_{t}(x) = \EE\left[e^{-\varphi(X_t)} + \int_0^t g(r,X_r,w_{t-r}(X_r))a(X_r,dr) - \int_0^t w_{t-r}(X_r)a(X_r,dr)\right]
    \end{equation}
\end{lem}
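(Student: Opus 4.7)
The plan is to transform equation \eqref{equ.laplace_equation_formula} into the claimed equivalent form by combining integration by parts in the Feynman--Kac weight $e^{-a(0,s)}$ with the strong Markov property of $X^n$. Conceptually, both formulations are mild representations of the same time-reversed evolution equation for $u_s := w_{t-s}$, namely $-\partial_s u_s = \Delta^n u_s + a[g(s,\cdot,u_s) - u_s]$ with terminal data $u_t = e^{-\varphi}$: formulation \eqref{equ.laplace_equation_formula} is the Feynman--Kac form treating $-a\cdot u$ as a killing potential and $a g$ as a source, whereas the target form is the Dynkin/free-walk form with the full difference $a(g-u)$ as source. All integrability is automatic, since $w,g\in[0,1]$ (they are Laplace-functional quantities), $a(x,ds)$ is locally bounded on $\ZZ_n^2$, and the non-explosion hypothesis of Lemma~\ref{lem.laplace_equation_formula} ensures $\EE_x[\int_0^t a(X_r,dr)]<\infty$, so Fubini and conditional expectations can be applied freely.

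Concretely, I would use the pathwise identity
\[
e^{-a(0,s)} = 1 - \int_0^s e^{-a(0,u)} a(X_u,du),\qquad s\in[0,t],
\]
substituted both into the terminal term $e^{-\varphi(X_t)-a(0,t)}$ and into the integrand $e^{-a(0,r)} g_r$ of \eqref{equ.laplace_equation_formula}, with $g_r := g(r,X_r,w_{t-r}(X_r))$. Swapping orders with Fubini on $\{0\le u\le r\le t\}$ rewrites \eqref{equ.laplace_equation_formula} as
\[
w_t(x) = \EE_x\!\left[e^{-\varphi(X_t)} + \int_0^t g_r a(X_r,dr)\right] - \EE_x\!\left[\int_0^t e^{-a(0,s)}\Bigl(e^{-\varphi(X_t)} + \int_s^t g_r a(X_r,dr)\Bigr) a(X_s,ds)\right],
\]
so the lemma reduces to showing that the last expectation equals $\EE_x[\int_0^t w_{t-s}(X_s) a(X_s,ds)]$. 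For this step I would condition on $\fG_s$ and invoke the strong Markov property of $X^n$ combined with \eqref{equ.laplace_equation_formula} applied on the shifted path starting from $X_s$ at time $s$, obtaining
\[
w_{t-s}(X_s) = \EE\!\left[e^{-\varphi(X_t)-a(s,t)} + \int_s^t e^{-a(s,r)} g_r a(X_r,dr)\,\Big|\, \fG_s\right].
\]
A second integration by parts applied to $e^{-a(s,u)}$, together with the multiplicative identity $e^{-a(0,s)} e^{-a(s,v)} = e^{-a(0,v)}$ and one further Fubini to reverse the $(s,v)$ order, makes the tail term produced by the first IBP cancel exactly against this conditional expression and produces precisely $\EE_x[\int_0^t w_{t-s}(X_s) a(X_s,ds)]$.

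The converse direction is obtained by reversing the same chain of manipulations, or more cleanly by recognizing both fixed-point equations as mild formulations of the same PDE above and invoking the uniqueness statement of Lemma~\ref{lem.variant_PAM} (with potential $\phi=a$ and inhomogeneous term $a g(t,\cdot,w_t)$) to conclude that their common unique solution satisfies both. The main obstacle I anticipate is the bookkeeping in the nested IBP/Fubini steps, in particular tracking the $0\le s\le v\le r\le t$ orderings so that the telescoping produced by $e^{-a(0,s)} e^{-a(s,v)} = e^{-a(0,v)}$ cleans out correctly; once this is carried out, the Markov-property recognition of $w_{t-s}(X_s)$ is immediate from \eqref{equ.laplace_equation_formula} and no further analytic input is required.
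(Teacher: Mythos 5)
Your proposal is correct and matches the approach the paper points to: the paper's own ``proof'' is a reference to Lemma~4.3.4 of \cite{dawson1993measure} together with a pointer to the IBP--plus--Markov computation in the proof of Lemma~\ref{lem.variant_PAM}, which is exactly the combination of integration by parts in $e^{-a(\cdot,\cdot)}$, Fubini, and the Markov-property identification of $w_{t-s}(X_s)$ that you carry out. One small remark: the final cancellation you invoke is slightly cleaner if, instead of a second IBP, you start from $\EE_x[\int_0^t w_{t-s}(X_s)\,a(X_s,ds)]$, insert the conditional Feynman--Kac representation of $w_{t-s}(X_s)$, apply Fubini, and use $\int_0^r e^{-a(s,r)}a(X_s,ds)=1-e^{-a(0,r)}$ on each resulting inner integral; this gives the desired identity pathwise with less bookkeeping, and your reference to the uniqueness statement in Lemma~\ref{lem.variant_PAM} for the converse is fine provided you invoke the $\Delta^n$ analogue noted in the paper's closing remark of Section~\ref{sec.variant}.
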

\begin{proof}
    See lemma 4.3.4 in \cite{dawson1993measure}, a simple version is given in the proof of lemma \ref{lem.variant_PAM} when we try to show the equivalence of Feynman Kac formulation and the mild formulation.
\end{proof}

\section{Auxiliary lemmas}
Here is an auxiliary lemma for our calculation of moment estimation.
\begin{lem}\label{lem.auxiliary}
    We have following inequalities:
    \begin{enumerate}
        \item $1 -x + \frac{x^2}{2} - e^{-x}\geq 0$, $x\geq 0$
        \item $\frac{x}{4} \leq 1 -x + \frac{x^2}{2} - e^{-x}$ for $x \geq 2$.
        \item $0\leq e^{-x} - 1+x\leq 2x^{1+\beta}$ for $x\geq 0, 0< \beta\leq 1$.
        \item $0 \leq -\frac{1}{\epsilon}\log(1-\epsilon x) - x\leq 2\epsilon x^2$ for $\epsilon >0, x \geq 0, \epsilon x \leq \frac{1}{2}$.
        \item For non-negative random variable $X$, we have $\EE[X^{1+\theta}] \leq (1+\theta)\delta^{1+\theta}+(1+\theta)\int_\delta^\infty r^\theta\PP[X\geq r]dr$, where  $\delta >0, \theta\geq -1$.
        \item $\PP[X\geq r] \leq  2r\int_0^{\frac{2}{r}}\EE\left[e^{-uX} - 1 + uX\right]du$, if $X\geq 0, \EE[X] < \infty$.
    \end{enumerate}
\end{lem}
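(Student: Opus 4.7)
The plan is to treat the six inequalities individually since they are largely independent elementary estimates, grouping them by the technique used.

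For (1) and (2), I would take a pure calculus approach. Let $f(x) = 1 - x + x^2/2 - e^{-x}$. Then $f(0) = 0$, $f'(x) = -1 + x + e^{-x}$ with $f'(0) = 0$, and $f''(x) = 1 - e^{-x} \ge 0$ on $[0,\infty)$; hence $f' \ge 0$ and $f \ge 0$ there, giving (1). For (2) it suffices to check that $g(x) := f(x) - x/4$ satisfies $g(2) = 1 - e^{-2} - 1/2 > 0$ and $g'(x) = -5/4 + x + e^{-x}$ is non-negative for $x \ge 2$, which follows because $g'(2) > 0$ and $g''(x) = 1 - e^{-x} > 0$.

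For (3) I would split into $0 \le x \le 1$ and $x \ge 1$. The lower bound is the convexity inequality $e^{-x} \ge 1 - x$. For the upper bound: when $x \le 1$, a second-order Taylor estimate gives $e^{-x} - 1 + x \le x^2/2 \le x^{1+\beta}$ (using $x \le 1$ and $\beta \le 1$); when $x \ge 1$ simply use $e^{-x} - 1 + x \le x \le x^{1+\beta}$. Inequality (4) follows from the series $-\log(1-y) = \sum_{k \ge 1} y^k/k$: the lower bound is obvious, and for the upper bound with $y = \epsilon x \le 1/2$,
\[
-\log(1-y) - y = \sum_{k \ge 2}\frac{y^k}{k} \le y^2 \sum_{k \ge 0}(1/2)^k = 2y^2,
\]
after dividing by $\epsilon$ this gives the stated bound $2\epsilon x^2$ (in fact $\epsilon x^2$).

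Inequality (5) is the layer-cake formula: $\EE[X^{1+\theta}] = (1+\theta)\int_0^\infty r^\theta \PP[X \ge r]dr$, which I split at $\delta$ and bound the integral on $[0,\delta]$ by $\int_0^\delta r^\theta dr = \delta^{1+\theta}/(1+\theta)$, giving after multiplication by $(1+\theta)$ the term $\delta^{1+\theta} \le (1+\theta)\delta^{1+\theta}$ when $1+\theta \ge 1$. The most substantive step is (6), which I would obtain as follows. The function $h(y) = e^{-y} - 1 + y$ satisfies $h'(y) = 1 - e^{-y} \ge 0$ on $[0,\infty)$, so $h$ is non-decreasing and non-negative. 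Therefore on $\{X \ge r\}$ one has $e^{-uX} - 1 + uX \ge e^{-ur} - 1 + ur$, and since $h \ge 0$ we can estimate
\[
\int_0^{2/r}\EE[e^{-uX} - 1 + uX]du \ge \PP[X \ge r]\int_0^{2/r}(e^{-ur} - 1 + ur)du.
\]
The change of variables $v = ur$ evaluates the remaining integral to $(1 - e^{-2})/r$, and since $1/(1-e^{-2}) < 2$, rearranging yields (6). No step involves a genuine obstacle; the only care needed is in (6) to exploit monotonicity of $h$ (rather than a pointwise quadratic lower bound, which would fail for large $uX$) before integrating.
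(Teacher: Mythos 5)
Your proof is correct, and on items (1)--(5) it matches the paper in spirit: the paper declares (1)--(3) ``obvious'' and omits them (you supply the calculus/case-split arguments), the paper's (4) uses the same log-series with a slightly different tail estimate (comparing the tail to $-\log(1-\epsilon x)\le\log 2$, whereas you compare $y^k/k$ to $y^k$ and sum the geometric series; both land on the factor $2$ --- though note your parenthetical ``in fact $\epsilon x^2$'' is not what your displayed chain gives, since $\sum_{k\ge2}y^k\le 2y^2$ loses a factor of $2$ relative to $\sum_{k\ge2}y^k/k\le y^2/(2(1-y))\le y^2$), and (5) is the layer-cake representation in both cases. You are in fact more careful than the paper on (5) in flagging that $\delta^{1+\theta}\le(1+\theta)\delta^{1+\theta}$ requires $1+\theta\ge1$; the stated range $\theta\ge-1$ is looser than what the proof literally supports, but this is harmless since the lemma is only invoked with $\theta>0$.

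The genuinely different step is (6). The paper swaps the order of integration to get $\int_0^\infty\frac{r}{x}\bigl(1-e^{-2x/r}-\tfrac{2x}{r}+\tfrac12(\tfrac{2x}{r})^2\bigr)\PP[dx]$, then drops the contribution from $x<r$ via item (1) and lower-bounds the contribution from $x\ge r$ via item (2). You instead use only non-negativity and monotonicity of $h(y)=e^{-y}-1+y$ to get $\EE[h(uX)]\ge h(ur)\PP[X\ge r]$ pointwise in $u$, and evaluate $\int_0^{2/r}h(ur)\,du=(1-e^{-2})/r$ exactly, finishing with $1/(1-e^{-2})<2$. Your route is a bit cleaner: it needs neither items (1)--(2) nor the Fubini step, and it makes visibly apparent why the constant $2$ and the upper limit $2/r$ conspire (one is tuned against $1-e^{-2}$). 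Both proofs deliver the same constant. Your closing remark --- that one must use monotonicity of $h$ rather than a quadratic lower bound, which fails for large $uX$ because $h$ grows linearly --- correctly identifies the one place requiring care and explains why the paper reaches for items (1)--(2) where you reach for monotonicity.
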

\begin{proof}
    The first three inequalities are obvious and we omit the proofs. For the fourth inequality, we consider the expansion of $\log(1-x)$ when $0 \leq x < 1$, we have 
    \begin{equation*}
        \begin{aligned}
        -\frac{1}{\epsilon}\log(1-\epsilon x) - x &= \sum_{i=2}^\infty \frac{\epsilon^{i-1}x^i}{i} = \epsilon x^2\left(\frac{1}{2} + \sum_{i=1}^\infty\frac{\epsilon^ix^i}{i+2}\right) \\
        &\leq \epsilon x^2\left(\frac{1}{2} - \log(1-\epsilon x)\right) \leq 2\epsilon x^2
        \end{aligned}
    \end{equation*}
    The fifth inequality is from the basic probability theory. 
    \begin{equation*}
        \begin{aligned}
            \EE\left[X^{1+\theta}\right] &= \int_0^\infty \int_0^x (1+\theta)t^\theta dt \PP[dx]\\
            &= (1+\theta)\int_0^\infty \int_t^\infty t^\theta \PP[dx]dt \\
            &\leq (1+\theta)\left(\delta^{1+\theta} + \int_\delta^\infty t^\theta\PP[X\geq t]dt\right)
        \end{aligned}
    \end{equation*}
    The final inequality is given by 
    \begin{equation*}
        \begin{aligned}
            r\int_0^{\frac{2}{r}} \EE[e^{-uX} - 1 + uX] du &= r\int_0^{\frac{2}{r}}\int_0^\infty (e^{-ux} - 1 + ux) \PP[dx]du \\
            &= \int_0^{\infty} \frac{r}{x}\left(1 - e^{-\frac{2x}{r}} - \frac{2x}{r} + \frac{1}{2}\frac{(2x)^2}{r^2}\right)\PP[dx]\\
            &\geq \int_r^\infty \frac{x}{2r}\PP[dx] \geq \frac{1}{2}\PP[X\geq r]
        \end{aligned}
    \end{equation*}
    We use the second inequality for the third inequality.
\end{proof}

We also need to find a special function $\varphi_0\in\fD_\fH$ such that we have some lower bounds.
\begin{lem}\label{lem.lower_bounds}
    There exists positive function $\varphi_0\in\fD_{\fH}$ such that 
    \[
        \varphi_0(x) \geq Ce^{-l|x|^\sigma}
    \]
    for some constant $C$ and any $l>0$.
\end{lem}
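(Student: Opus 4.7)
My plan is to construct $\varphi_0$ explicitly as $\int_0^t T_s u\,ds$ for a carefully chosen positive initial datum $u$. The freedom built into the definition of $\fD_{\fH}$ allows the initial datum to live in $C^\zeta(\RR^2, e(l))$ for any $l \in \RR$ and $\zeta > 0$, and I will exploit this by taking $u$ to grow like $e^{l_0|x|^\sigma}$ for $l_0$ large. Concretely, fix $l_0 > 0$ and set $u(x) := e^{l_0 \rho(x)}$, where $\rho$ is a smooth modification of $|x|^\sigma$ coinciding with it outside a neighbourhood of the origin (so that $u$ is smooth). Then $u$ is strictly positive and lies in $C^\zeta(\RR^2, e(l_0))$ for any $\zeta < 1$, so by definition $\varphi_0 := \int_0^t T_s u\,ds \in \fD_{\fH}$ for any $t > 0$.

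Strict positivity of $\varphi_0$ follows from the strict positivity of the Green's function $\fZ_s(x,y)$, which is built into the polymer-measure representation recalled in Section~\ref{sec.variant}. For the quantitative lower bound, the key ingredient is the sub-additivity $|y|^\sigma \leq |x|^\sigma + |x-y|^\sigma$ valid for $\sigma \in (0,1)$, equivalently $u(y) \geq e^{l_0|x|^\sigma} e^{-l_0|x-y|^\sigma}$ for $|x|,|y|$ large. Plugging this into the Green's function representation $T_s u(x) = \int \fZ_s(x,y)\,u(y)\,dy$ yields
\[
T_s u(x) \;\geq\; e^{l_0|x|^\sigma}\int \fZ_s(x,y)\,e^{-l_0|x-y|^\sigma}\,dy,
\]
and further restricting to $|x-y|\leq R$ with $R$ fixed gives
\[
T_s u(x) \;\geq\; e^{l_0|x|^\sigma}\,e^{-l_0 R^\sigma}\int_{|x-y|\leq R} \fZ_s(x,y)\,dy .
\]

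The remaining task is to show that the local mass $\int_{|x-y|\leq R}\fZ_s(x,y)\,dy$ is bounded below by a positive constant uniformly in $x$. I plan to establish this by passing to the discrete regularization, where the Feynman--Kac formula gives $\fZ^n_s(x,y) = e^{-sc_n}\,\EE_x[e^{\int_0^s \xi^n(X^n_r)dr}\,\delta_y(X^n_s)]$, applying Jensen's inequality conditionally on the event $\{\sup_{r\leq s}|X^n_r - x|\leq R\}$, and exploiting the translation invariance of the underlying random walk together with the uniform bound on $\xi^n$ from Assumption~\ref{ass.environment}. The bound then transfers to the continuum limit via the convergence of $\fZ^n$ to $\fZ$ implied by the results of \cite{martin2019paracontrolled, cannizzaro2018multidimensional, chouk2017invariance}. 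Integrating in $s$ from $0$ to $t$ yields $\varphi_0(x) \gtrsim e^{l_0|x|^\sigma}$, which trivially dominates $C e^{-l|x|^\sigma}$ for every $l > 0$.

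The main obstacle is the uniform-in-$x$ lower bound on the local mass of $\fZ_s$. The difficulty comes from the renormalization constant $c_n \simeq \log n$ diverging in the Feynman--Kac formula; this divergence is compensated by the small-scale blow-up of the exponential functional of $\xi^n$, but making this precise in a way that is uniform over all $x \in \RR^2$ and survives the limit $n \to \infty$ requires the translation structure of the discrete environment and the convergence of the renormalized polymer partition function -- the bulk of the actual work.
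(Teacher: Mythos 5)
Your approach diverges genuinely from the paper's, and it runs into a gap that the paper's argument is specifically designed to avoid. The crux of your plan is the claim that $\int_{|x-y|\leq R}\fZ_s(x,y)\,dy = T_s\1_{B(x,R)}(x)$ is bounded below by a positive constant \emph{uniformly in $x$} for a fixed $s>0$. This is not available from the solution theory used in the paper and is almost certainly false in the generality required: Assumption~\ref{ass.environment} only controls $\xi$ in polynomially weighted spaces $C^{-1-\kappa'}(\RR^2,p(a))$, so the potential may produce arbitrarily bad local behaviour as $|x|\to\infty$. The PAM estimates accordingly give bounds like $\|T_su_0-P_su_0\|$ growing as $e^{t|x|^\sigma}$ in the spatial variable, and the dual phenomenon is that the time window for which $T_s$ stays pointwise close to the heat semigroup shrinks like $e^{-c|x|^\sigma}$. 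Nothing in the paper rules out $T_s\1_{B(x,R)}(x)$ becoming arbitrarily small at fixed $s$ as $|x|\to\infty$. Passing to the discrete Feynman--Kac formula and trying to compensate the divergence $e^{-sc_n}$ against the singular exponential functional of $\xi^n$ via Jensen is exactly the renormalization problem the paracontrolled machinery was built to handle in a distributional, not pointwise, sense; your argument would need a quantitative, pointwise, uniform-in-$x$ Harnack-type lower bound, which is a much harder statement than the lemma and, if it were true, would upgrade the conclusion from $\varphi_0\gtrsim e^{-l|x|^\sigma}$ to $\varphi_0\gtrsim e^{l_0|x|^\sigma}$ — a red flag that the intermediate claim is too strong.

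The paper sidesteps this entirely by taking $u_0\equiv 1$. Writing $T_s u_0 = 1 + w_s$ with $w_s = T_su_0-P_su_0$, the PAM estimates in the space $C^{(1-\epsilon)/2}_TL^\infty(\ZZ_n^2,e(t))$ give $|w_s(x)|\leq C s^{(1-\epsilon)/2}e^{t|x|^\sigma}$, so $T_s u_0(x)\geq \tfrac12$ for all $s\leq s(x):= \bigl(2C\bigr)^{-2/(1-\epsilon)} e^{-\frac{2t}{1-\epsilon}|x|^\sigma}$. Since $T_s u_0\geq 0$ for all $s$ by positivity, integrating in $s$ over $[0,t]$ yields $\varphi_0(x)=\int_0^t T_su_0(x)\,ds \geq \tfrac12 s(x)\gtrsim e^{-\frac{2t}{1-\epsilon}|x|^\sigma}$, and any exponent $l>0$ is obtained by taking $t$ small. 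The key move — replacing a fixed-time pointwise lower bound on $T_s$ by an $x$-dependent short-time window over which $T_su_0$ cannot have moved far from $1$ — is exactly what your scheme is missing. You could recover your proposal by proving the desired local-mass lower bound with a constant allowed to decay like $e^{-m|x|^\sigma}$ rather than uniformly, but at that point you have essentially reconstructed the paper's time-regularity argument with extra steps.
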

\begin{proof}
    Consider $u_0 = 1$ and $A_tu_0$ for some horizon $t$. Then we have the representation 
    \[
        T^n_s u_0 = P^n_s u_0 + \int_0^s P^n_{s-r}((T^n_ru_0)\xi^n)ds 
    \]
    where $P_t^n$ is the discrete heat semigroup. Since $u_0 = 1$, we know $P^n_s u_0 = 1$ for all time $s$. Let 
    \[
        w^n_s = \int_0^s P^n_{s-r}((T_r^nu_0)\xi^n)ds
    \]
    we see by the solution theory of PAM that $w^n_s \in C^{\frac{1-\epsilon}{2}}L^\infty(\ZZ_n^d,e(t))$ uniformly over $n$. Thus there exists $C>0$ such that 
    \[
        |w_s^n(x)| \leq Cs^{\frac{1-\epsilon}{2}}e^{t|x|^\sigma}
    \]
    for each $x$, set $s(x) = \frac{e^{-\frac{2t}{1-\epsilon}|x|^\sigma}}{(2C)^{\frac{2}{1-\epsilon}}}$, then for any $0 \leq r \leq s(x)$, we have 
    \[
        |w_r^n(x)| \leq \frac{1}{2}
    \]
    Choosing $C$ large enough such that $s(x)\leq t$ for all $x \in \ZZ^2_n$. Then 
    \[
        A_t u_0(x) = \int_0^t T_su_0(x)ds \geq \frac{1}{2}\int_0^{s(x)}dr = \frac{s(x)}{2}
    \]
    This gives the result.
\end{proof}

    \bibliographystyle{amsalpha}
	\bibliography{RSBM}
\end{document}